\author{Ties Laarakker}
\address{}
\email{p.t.a.laarakker@uu.nl}
\title{Monopole contributions to refined Vafa-Witten invariants}
\newcommand{\CC}{\mathbb{C}}
\newcommand{\PP}{\mathbb{P}}
\newcommand{\QQ}{\mathbb{Q}}
\newcommand{\ZZ}{\mathbb{Z}}
\newcommand{\C}{\mathcal{C}}
\newcommand{\I}{\mathcal{I}}
\renewcommand{\L}{\mathcal{L}}
\newcommand{\M}{\mathcal{M}}
\renewcommand{\O}{\mathcal{O}}
\newcommand{\N}{\mathcal{N}}
\newcommand{\SW}{\mathrm{SW}}
\renewcommand{\t}{\mathfrak{t}}
\newcommand{\Z}{\mathsf{Z}}
\DeclareMathOperator{\Hilb}{Hilb}
\DeclareMathOperator{\ch}{ch}
\DeclareMathOperator{\Hom}{Hom}
\newcommand{\RHom}{R\mathscr{H}om}
\DeclareMathOperator{\rk}{rk}
\newcommand{\Pic}{\mathrm{Pic}}
\newcommand{\pr}{\mathrm{pr}}
\DeclareMathOperator{\Td}{Td}
\newcommand{\vir}{\mathrm{vir}}
\newcommand{\VW}{\mathrm{VW}}
\newcommand{\fN}{\mathfrak{N}}
\newtheorem{result}{Theorem}
\renewcommand*{\theresult}{\Alph{result}}
\numberwithin{equation}{section}
\newtheorem{theorem}[equation]{Theorem}
\newtheorem{conjecture}[equation]{Conjecture}
\newtheorem{corollary}[equation]{Corollary}
\newtheorem{definition}[equation]{Definition}
\newtheorem{lemma}[equation]{Lemma}
\newtheorem{proposition}[equation]{Proposition}
\theoremstyle{remark}
\newtheorem{notation}[equation]{Notation}
\newtheorem{example}[equation]{Example}
\newtheorem{remark}[equation]{Remark}
\def\l@subsection{\@tocline{2}{0pt}{2pc}{6pc}{}}
\begin{document}


\begin{abstract}
We study the monopole contribution to the refined Vafa-Witten invariant, recently defined in \cite{MT}. We apply the results of \cite{GT2} to prove a universality result for the generating series of contributions of Higgs pairs with $1$-dimensional weight spaces. For prime rank, these account for the entire monopole contribution by a theorem of Thomas. We use toric computations to determine part of the generating series, and find agreement with the conjectures of \cite{GK} for rank $2$ and $3$.
\end{abstract}
\maketitle

\tableofcontents

\clearpage

\section{Introduction}
\subsection{Vafa-Witten invariants}
In \cite{TT}, Yuuji Tanaka and Richard Thomas proposed a definition of an $\mathrm{SU}(r)$ Vafa-Witten invariant \cite{VW}. Let $(S,H)$ be a polarized smooth complex surface with canonical bundle $\omega_S$. A \emph{Higgs pair} is a pair
\[(E,\phi)  \quad \text{with} \quad E\in \mathrm{Coh}(S), \, \phi \colon E \rightarrow E \otimes \omega_S\,.\]
Choose a rank $r$, Chern classes $c_1,c_2$ on $S$, and a line bundle $M$ on $S$ with $c_1(M) = c_1$. Assume that $r,c_1$ and $c_2$ are chosen in such a way that stability and semistability of Higgs pairs coincide (see Section \ref{SecStab}). Let
\[\N_{r,M,c_2}^\bot = \{(E,\phi)\mid \mathrm{tr\,} \phi = 0, \rk(E)=r, \det E \cong M, c_2(E) = c_2 \}\]
be the moduli space of Gieseker stable trace free Higgs pairs with fixed determinant. In \cite{TT} a symmetric perfect obstruction theory on $\N_{r,M,c_2}^\bot$ is constructed. Its dual complex is given by the cone
\begin{equation}\label{EqCone}
\RHom_\pi(E,E)_0\xrightarrow{[\ ,\phi]}\RHom_\pi(E,E \otimes \omega_S)_0\rightarrow T\,,
\end{equation}
where $(E,\phi)$ is a universal Higgs pair on $ \N_{r,M,c_2}^\bot\times S$ and
\[\pi\colon \N_{r,M,c_2}^\bot\times S \rightarrow  \N_{r,M,c_2}^\bot\]
denotes the projection.
The $\CC^*$-action on $ \N_{r,M,c_2}^\bot$, which is given by scaling the Higgs field, can be lifted to an equivariant structure on $E$. It gives rise to a localized virtual class, which is used to define the Vafa-Witten invariant by
\begin{equation}\label{EqDefVW}
\VW_{r,c_1,c_2}(S) = \int_{\left [ (\N_{r,M,c_2}^\bot )^{\CC^*}\right ]^{\vir}} \frac{1}{e(N^\vir)}\,,
\end{equation}
where $N^\vir$ is the virtual normal bundle to $ (\N_{r,M,c_2}^\bot )^{\CC^*}$ in $\N_{r,M,c_2}^\bot$, and $e(N^\vir)$ denotes its equivariant Euler class.

A Higgs pair $(E,\phi)$ in the fixed locus $(\N_{r,M,c_2}^\bot )^{\CC^*}$ can be equipped with a $\CC^*$-action, and hence decomposes into weight spaces. We may assume the 0 is the highest weight appearing in the decomposition. As explained in \cite{TT}, the Higgs field acts with weight $-1$. Hence we can write
\begin{align*}
E &= \bigoplus_{i=0}^k E_i \otimes \t^{-i}\\
\phi &= (\phi_1,\ldots,\phi_k)\colon E \rightarrow E\otimes \omega_S \otimes \t\,,
\end{align*}
where the $E_i$ are torsion free sheaves of rank $r_i$ and $\phi$ decomposes into maps
\[\phi_i\colon E_{i-1}\rightarrow E_i\otimes \omega_S \otimes \t \quad \text{for} \quad i = 1,\ldots,k\,.\]
We will write
\[\M_{(r_0,\ldots,r_k)} = \M_{(r_0,\ldots,r_k),c_1,c_2} \subset (\N_{r,M,c_2}^\bot )^{\CC^*}\]
for the open and closed locus of Higgs pairs with weight spaces of dimensions $r_0,\ldots,r_k$. The locus
\[\M_{(r)} = \left\{(E,\phi)\in (\N_{r,M,c_2}^\bot )^{\CC^*} \mid \phi =0\right\}\]
is called the \emph{instanton branch} \cite{GK}. It is isomorphic to the moduli space of torsion free rank $r$ sheaves, and its contribution to the Vafa-Witten invariant is the (localized) virtual Euler characteristic (up to a sign). Its complement in the $\CC^*$-fixed locus is called the \emph{monopole branch}. In this paper, we will discuss the contribution of the locus $\M_{1^r} = \M_{(1\cdots 1)}$ of Higgs pairs with $1$-dimensional weight spaces to the monopole branch. As an application of \cite{GT2}, we will describe the structure of the generating series of the contributions of $\M_{1^r}$ to the Vafa-Witten invariant, and compute them in some cases.

In \cite{MT} (see also \cite{T}), Maulik and Thomas define a refined version of the Vafa-Witten invariant.
It is a rational function in $\sqrt y$, rather than a rational number. It specializes to the unrefined invariant at $y=1$. The instanton contribution to the refined Vafa-Witten invariant is given, up to a sign and a power of $y$, by the $\chi_y$-genus \cite{FG} of the component $\M_{(r)}$, which refines the virtual Euler characteristic \cite{GK17}. We will discuss the contribution of $\M_{1^r}$ to the refined invariant.

\subsection{Nested Hilbert schemes}
Fix a rank $r$. For an $r$-tuple of non-negative integers $n = (n_0,\ldots,n_{r-1})$, and an $(r-1)$-tuple $\beta = (\beta_1,\ldots,\beta_{r-1})$ of classes in $H^2(S,\ZZ)$, let
\[S^{[n_i]} \coloneqq \Hilb^{n_i}(S)\]
denote the Hilbert schemes of $n_i$ points on $S$, and let $\Hilb_{\beta_i}(S)$ be the Hilbert schemes of curves on $S$ with class $\beta_i$. We will also write
\[\Hilb_\beta^{n}(S) \coloneqq S^{[n_0]}\times \cdots \times S^{[n_{r-1}]}\times \Hilb_{\beta_1}(S)\times\cdots\times \Hilb_{\beta_s}(S)\,.\]
The \emph{nested Hilbert scheme}
\begin{equation}\label{EqSmoAm}
i:S_\beta^{[n]} \hookrightarrow \Hilb_\beta^{n}(S)
\end{equation}
is defined as the incidence locus
\[\{I_0,\ldots,I_{r-1},C_1,\ldots,C_{r-1} \mid I_{i-1}(-C_i) \subset I_i\,\}\,.\]

The nested Hilbert schemes are studied in \cite{GSY}, in which a perfect obstruction theory is constructed. Write $\I^{[n_i]}$ for the universal ideal sheaf on $S^{[n_i]} \times S$, and let
\[\mathcal{D}_i\rightarrow\Hilb_{\beta_i}(S)\times S\]
be the universal curve with class $\beta_i$. Finally, write
\[\pi\colon S_\beta^{[n]} \times S\rightarrow S_\beta^{[n]}\]
for the projection.

\begin{theorem}\textup{\cite{GSY}}\label{TheVir}
The nested Hilbert scheme $S^{[n]}_\beta$ admits a perfect obstruction theory, the dual of which is given by a cone on
\[\left(\bigoplus_{i=0}^{r-1} \RHom_\pi(\I^{[n_i]},\I^{[n_i]})\right)_0 \rightarrow \bigoplus_{i=1}^{r-1}\RHom_\pi\left(\I^{[n_{i-1}]},\I^{[n_i]}(\mathcal{D}_i)\right)\,,\]
in which the LHS is the kernel of the trace map
\[\bigoplus_{i=0}^{r-1} \RHom_\pi(\I^{[n_i]},\I^{[n_i]})\rightarrow R\pi_*\O_S\,.\]
\end{theorem}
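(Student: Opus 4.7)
The plan is to realise the nested Hilbert scheme as an incidence locus inside the ambient product $A \coloneqq \Hilb^n_\beta(S)$ and then to assemble its perfect obstruction theory by combining the absolute obstruction theories of the factors of $A$ with a relative obstruction theory coming from the universal incidence maps.

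First I would record the obstruction-theoretic data on the ambient space $A$. On each point factor $S^{[n_i]}$, which is smooth, the truncated Atiyah class of the universal ideal $\I^{[n_i]}$ yields the standard tangent/obstruction complex $\RHom_\pi(\I^{[n_i]},\I^{[n_i]})_0$; on each curve factor $\Hilb_{\beta_i}(S)$ the analogous Atiyah-class construction for the universal ideal of $\mathcal{D}_i$ produces a complex of the same shape. Taking the direct sum and extracting the traceless part via the global trace $\bigoplus_i \RHom_\pi(\I^{[n_i]},\I^{[n_i]})\to R\pi_*\O_S$ recovers the left-hand term of the statement, viewed (with the standard degree convention) as an absolute perfect obstruction theory for $A$.

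Next I would construct the relative obstruction theory of the closed immersion $i\colon S^{[n]}_\beta \hookrightarrow A$. Over $S^{[n]}_\beta\times S$ the universal embeddings $\I^{[n_{j-1}]}(-\mathcal{D}_j)\hookrightarrow \I^{[n_j]}$ define canonical global morphisms $\I^{[n_{j-1}]}\to \I^{[n_j]}(\mathcal{D}_j)$, and $S^{[n]}_\beta$ is precisely the subscheme of $A$ on which such morphisms exist. Pairing the Atiyah classes of $\I^{[n_{j-1}]}$ and $\I^{[n_j]}$ with these universal maps produces, after an appropriate shift, a morphism from $L_A|_{S^{[n]}_\beta}$ into $\bigoplus_j \RHom_\pi(\I^{[n_{j-1}]},\I^{[n_j]}(\mathcal{D}_j))$, which plays the role of the relative perfect obstruction theory of $i$.

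Finally I would assemble these data into an absolute obstruction theory for $S^{[n]}_\beta$ using the Behrend-Fantechi mapping cone construction for a closed immersion equipped with compatible absolute and relative obstruction theories; this delivers a morphism from $L_{S^{[n]}_\beta}$ into the cone appearing in the statement. The main obstacle is verifying perfectness, namely that this morphism is a quasi-isomorphism on $h^0$ and a surjection on $h^{-1}$. I would check this pointwise, by matching the long exact cohomology sequence of the stated cone at a nested datum $(I_0,\ldots,I_{r-1},C_1,\ldots,C_{r-1})$ with the standard deformation/obstruction sequence governing simultaneous deformations of the $I_j$, the $C_j$ and the inclusions $I_{j-1}(-C_j)\subset I_j$.
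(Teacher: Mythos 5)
First, a caveat about the comparison you're asking me to make: the paper does not prove this statement. Theorem \ref{TheVir} is imported verbatim from \cite{GSY}, and the only alternative construction the paper mentions is that of \cite{GT2} via virtual resolutions of degeneracy loci of complexes. So there is no in-paper proof to measure your plan against; I can only compare it with those external arguments.

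Measured that way, your outline has the right general shape (incidence locus in the ambient product, Atiyah classes, Behrend--Fantechi assembly), but it glosses over the two points that carry the actual content. First, the bookkeeping as you set it up does not reproduce the stated cone. If you give the full ambient $A=\Hilb^n_\beta(S)$, \emph{including} the factors $\Hilb_{\beta_i}(S)$, an absolute obstruction theory and then cone it against a relative term $\bigoplus_i\RHom_\pi(\I^{[n_{i-1}]},\I^{[n_i]}(\mathcal{D}_i))$, the deformations of the divisors $\mathcal{D}_i$ appear both in $T_A$ and inside the relative term (whose $h^0$ contains the sections of $\O_S(\mathcal{D}_i)$ compatible with the nesting). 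In the displayed cone the left-hand side involves only the traceless endomorphisms of the $\I^{[n_i]}$; the curve deformations sit entirely on the right. The assembly that matches the statement takes the base to be $\prod_i S^{[n_i]}$ alone, with the choice of the $\mathcal{D}_i$ and the incidence conditions packaged together into the relative term. Second, and more seriously, the step ``pairing the Atiyah classes with the universal maps produces a relative perfect obstruction theory for $i$'' is precisely where the difficulty lives: $S^{[n]}_\beta$ is not the zero locus of a section of a vector bundle on $A$ but the degeneracy locus of the complexes $\RHom_\pi(\I^{[n_{i-1}]},\I^{[n_i]}(\mathcal{D}_i))$, whose $h^0$ jumps exactly along $S^{[n]}_\beta$; extracting a two-term relative obstruction theory in that situation is the whole point of \cite{GSY} and of the resolutions in \cite{GT2}, not a routine application of the mapping-cone lemma. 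Relatedly, your final check conflates the obstruction-theory axioms ($h^0$ isomorphism, $h^{-1}$ surjection) with perfect amplitude $[-1,0]$: the latter also requires the vanishing of the top cohomology of the cone, which uses $\mathrm{Ext}^2(I,I)_0=0$ for ideal sheaves of points together with a surjectivity statement in degree two that your plan never addresses.
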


In \cite{GT2}, Gholampour and Thomas give another construction of the perfect obstruction theory, using virtual resolutions of degeneracy loci of complexes. Moreover, they give a formula for the induced virtual class in the ambient space~\eqref{EqSmoAm}. We will give the statement in the following restricted setting.

Let $S$ be a surface satisfying
\[H^1(\O_S) = 0 \quad \text{and} \quad p_g(S)>0\,.\]
For $i=0,\ldots, r-1$, let $\O_S(\beta_i)$ be the line bundle with $c_1(\O_S(\beta_i)) = \beta_i$, so we have
\[\Hilb_{\beta_i}(S) = |\O_S(\beta_i)| \coloneqq \PP(H^0(\O_S(\beta_i)))\,.\]
We will write
\[\mathcal{F} (\beta_i) = \mathcal{F} \otimes \O_S(\beta_i)\]
for any sheaf $\mathcal{F}$ on $S$.

\begin{theorem}\textup{\cite[Theorem~5.6]{GT2}}\label{TheComp}
After push-forward by $i$, the virtual class of $S_\beta^{[n]}$ is given by
\begin{align*}
i_*[S_\beta^{[n]}]^{\vir}
	= {} & \prod_{i=1}^{r-1} e\left(R\pi_*\O_S(\beta_i) - \RHom_\pi\left(\I^{[n_{i-1}]},\I^{[n_i]}(\beta_i)\right)\right) \\
	&  \cap \left[S^{[n_0]} \times \cdots \times S^{[n_{r-1}]}\right] \times \SW(\beta_1)\times \cdots \times \SW(\beta_{r-1})\\
	\in {} & A_{n_0+n_s}\left(\Hilb_\beta^{n}(S)\right)
\end{align*}
in which
\[\SW(\beta_i) \in A_0(|\O_S(\beta_i)|)\cong \ZZ\]
is the Seiberg-Witten invariant of $\beta_i$, considered as a $0$-cycle.
\end{theorem}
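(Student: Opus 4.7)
The plan is to realize $S_\beta^{[n]}$ as the zero locus of a section of a perfect two-term complex on the smooth ambient space $\Hilb_\beta^n(S)$, and then apply the virtual class formula for degeneracy loci developed earlier in \cite{GT2}. The nesting condition $\I^{[n_{i-1}]}(-\mathcal{D}_i) \subset \I^{[n_i]}$ translates, for each $i$, into the vanishing of the composite
$$\I^{[n_{i-1}]}(-\mathcal{D}_i) \hookrightarrow \O_S(-\mathcal{D}_i) \hookrightarrow \O_S \twoheadrightarrow \O_S/\I^{[n_i]},$$
which defines a canonical section of $R\pi_* \RHom(\I^{[n_{i-1}]}(-\mathcal{D}_i), \O_S/\I^{[n_i]})$ on $\Hilb_\beta^n(S)$ whose zero scheme is exactly $S_\beta^{[n]}$.

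First I would identify the $K$-theory class of this defining complex with the virtual bundle appearing inside the Euler class in the statement. Using the triangles $\I^{[n_i]} \to \O_S \to \O_S/\I^{[n_i]}$ and $\I^{[n_{i-1}]} \to \O_S \to \O_S/\I^{[n_{i-1}]}$, together with Grothendieck duality on $S$ and the vanishing $H^1(\O_S)=0$ that kills a trace correction, the defining complex can be rewritten, up to shift and sign in $K$-theory, as the virtual bundle $R\pi_* \O_S(\beta_i) - \RHom_\pi(\I^{[n_{i-1}]}, \I^{[n_i]}(\beta_i))$. The total defining complex on $\Hilb_\beta^n(S)$ is then the exterior sum of these over $i=1,\dots,r-1$.

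Next I would apply the GT2 degeneracy-loci theorem: if $X \subset A$ is cut out as the zero locus of a section of a perfect complex $E^\bullet$ of the correct Tor-amplitude on a smooth ambient $A$, then the induced perfect obstruction theory on $X$ pushes forward to $i_*[X]^\vir = e(E^\bullet) \cap [A]$. Here the ambient fundamental class splits as the external product
$$[S^{[n_0]}] \times \cdots \times [S^{[n_{r-1}]}] \times [|\O_S(\beta_1)|] \times \cdots \times [|\O_S(\beta_{r-1})|],$$
and the Euler class factorises accordingly. The final step invokes $p_g(S)>0$: by the algebraic Seiberg--Witten theory of D\"urr--Kabanov--Okonek and Chang--Kiem, the contribution of the Euler-class factor supported on the linear system $|\O_S(\beta_i)|$ concentrates in degree $0$ and computes exactly $\SW(\beta_i)$ as a $0$-cycle.

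The main obstacle is the derived-categorical bookkeeping: one must verify that the defining complex really has the Tor-amplitude required for the GT2 degeneracy-locus formula to apply, and check that its Euler class cleanly separates into a polynomial in tautological classes on the Hilbert-of-points factors and a Seiberg--Witten $0$-cycle factor on the curve factors. Both hypotheses on $S$ are used essentially: $H^1(\O_S)=0$ eliminates a trace term that would otherwise spoil the symmetric form in the statement, and $p_g>0$ forces the linear-system contribution to concentrate as a Seiberg--Witten invariant rather than spreading over positive-dimensional strata of $|\O_S(\beta_i)|$.
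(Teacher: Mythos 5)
The first thing to say is that the paper does not prove this statement at all: it is imported verbatim (in a restricted setting) as \cite[Theorem~5.6]{GT2}, and the author explicitly treats the surrounding work as ``an application of the result by Gholampour and Thomas.'' So there is no internal argument to compare your proposal against; what you have written is an attempted reconstruction of the proof in \cite{GT2} itself.

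As such a reconstruction, your outline does track the actual strategy of \cite{GT2}: describe $S_\beta^{[n]}$ inside $\Hilb_\beta^n(S)$ by the vanishing of the composites $\I^{[n_{i-1}]}(-\mathcal{D}_i)\to\O_S\to\O_S/\I^{[n_i]}$, treat this as a degeneracy/zero locus of a two-term complex, and apply their Thom--Porteous-type pushforward formula. But as a proof it defers essentially all of the content of the cited theorem. Two gaps are worth naming concretely. First, $R\pi_*\RHom(\I^{[n_{i-1}]}(-\mathcal{D}_i),\O_S/\I^{[n_i]})$ is a complex whose $h^0$ jumps, not a bundle, so ``zero locus of a section'' only makes sense after the virtual-resolution machinery of \cite{GT2}; moreover one must check that the perfect obstruction theory this produces agrees with the one of Theorem~\ref{TheVir} (equivalently, with the $\CC^*$-localized Tanaka--Thomas theory), since otherwise ``$[S_\beta^{[n]}]^{\vir}$'' in the statement is not pinned down. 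Second, the factorization you assert at the end is not automatic: the defining complex genuinely couples the point factors to the linear systems through the universal divisor $\mathcal{D}_i$, whereas the right-hand side of the theorem is a class on $S^{[n_0]}\times\cdots\times S^{[n_{r-1}]}$ involving only the \emph{fixed} twist $\O_S(\beta_i)$, times a $0$-cycle $\SW(\beta_i)$ on $|\O_S(\beta_i)|$. Trading $\mathcal{D}_i$ for $\beta_i$ and extracting the Seiberg--Witten $0$-cycle (using $p_g>0$ and $H^1(\O_S)=0$) is the substantive step, and ``concentrates in degree $0$'' is an assertion of the conclusion rather than an argument for it. Also note that the Euler class in the statement is the $(n_{i-1}+n_i)$th Chern class of a $K$-theory class of that rank, which is only a sensible ``Euler class'' because of the Carlsson--Okounkov-type vanishing recorded in Remark~\ref{RemOkVan}; your sketch does not engage with this. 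None of this means your outline is wrong --- it is a fair précis of \cite{GT2} --- but it is not a proof, and the paper under review offers none to measure it against.
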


\begin{remark} \label{RemOkVan}
We write
\[e\left(R\pi_*\O_S(\beta_i) - \RHom_\pi\left(\I^{[n_{i-1}]},\I^{[n_i]}(\beta_i)\right)\right)\]
for $(n_{i-1} + n_i)$th Chern class of the $K$-theory class in the brackets, which behaves in some sense like a rank $n_{n-1}+n_i$ vector bundle. E.g., by the generalized Carlsson-Okounkov vanishing \cite{GT}, its Chern classes vanish beyond its rank.
\end{remark}

\begin{remark}
For the definition and some basic properties of Seiberg-Witten classes of algebraic surfaces with $H^1(S)=0$ and $p_g>0$, we refer to \cite[Section 6.3.1]{Mo} or \cite[Section 4]{GSY2}.
\end{remark}

\begin{remark}
It is Theorem~\ref{TheComp} that allows us to compute the $\M_{1^r}$ contributions to the Vafa-Witten invariant. A large part of our paper should be seen as an application of the result by Gholampour and Thomas.
\end{remark}

\subsection{Results}
The moduli space $\M_{1^r}$ is a union of nested Hilbert schemes $S_\beta^{[n]}$ \cite{GSY2,TT}. Moreover, the $\CC^*$-localized virtual class from \cite{TT} agrees with the virtual class from Theorem \ref{TheVir}. It follows that the contribution of each component $\M_{1^r}$ to the Vafa-Witten invariant is \emph{topological} \cite{GT2}. The observation that the generating series of these contributions is multiplicative cf. \cite{Goe}, leads to the following result.

\begin{notation}
We will write
\[\VW_{1^r,c_1,c_2}(S,y)\]
for the contribution of $\M_{1^r} = \M_{1^r,c_1,c_2}$ to the refined Vafa-Witten invariant of~\cite{MT}, and
\[\Z_{S,r,c_1}(q,y) = \frac{q^{\frac{1-r}{2r} c_1^2}}{\#H^2(S,\ZZ)[r] } \, \sum_{c_2\in\ZZ} \VW_{1^r,c_1,c_2}(S,y)\, q^{c_2}\]
for the generating series of such contributions. Here
\[H^2(S,\ZZ)[r] \coloneqq \ker \left(H^2(S,\ZZ) \xrightarrow{\cdot r} H^2(S,\ZZ)\right)\]
denotes the $r$-torsion subgroup of $H^2(S,\ZZ)$.
\end{notation}

\begin{result}\label{Theorem1}
Fix a rank $r\geq1$. There are universal Laurent series
\[A,\,B,\,C_{ij} \in \QQ(\sqrt{y})(\!(q^\frac{1}{2r})\!) \quad 1\leq i\leq j\leq r-1 \,,\]
depending \emph{only} on $r$, such that for any surface $S$ with $H^1(\O_S) = 0$ and $p_g(S)>0$, and any class $c_1\in H^2(S,\ZZ)$ such that semistability of Higgs pairs implies stability for all $c_2$, we have
\begin{equation} \label{EqGenSer}
\Z_{S,r,c_1}(q,y) = 
	A^{\chi(\O_S)}
	B^{K_S^2} \,
\sum_\beta
	 \SW(\beta^1)\cdots\SW(\beta^{r-1})
	\prod_{i\leq j}C_{ij}^{\beta^i\beta^j}
\end{equation}
where the second sum is taken over tuples $\beta = (\beta^1,\ldots,\beta^{r-1})\in (H^2(S,\ZZ))^{r-1}$ with
\[c_1 \equiv \sum_i i\beta^i \mod rH^2(S,\ZZ)\,.\]
\end{result}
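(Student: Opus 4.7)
The plan is to decompose $\M_{1^r}$ into a disjoint union of nested Hilbert schemes, use Theorem~\ref{TheComp} to rewrite the contribution of each as a topological integral on a product of Hilbert schemes of points, and then invoke the multiplicativity observation of \cite{Goe} to read off the exponential structure.

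A $\CC^*$-fixed Higgs pair with one-dimensional weight spaces decomposes as $E=\bigoplus_{i=0}^{r-1}I_{Z_i}\otimes L_i$, with each $\phi_i$ specified by an effective curve in $|\omega_S\otimes L_i\otimes L_{i-1}^{-1}|$. Writing $n_i=\ell(Z_i)$ and letting $\beta^i\in H^2(S,\ZZ)$ denote appropriate parameters encoding the Chern classes $c_1(L_i)$, the fixed-determinant condition $\bigotimes_i L_i\cong M$ translates (after an index shift) into the congruence $c_1\equiv\sum_i i\beta^i\mod rH^2(S,\ZZ)$, and $c_2$ becomes a quadratic polynomial in $n_i$ and $\beta^i$. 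Each component of $\M_{1^r}$ is thus identified with a nested Hilbert scheme $S^{[n]}_\beta$; by \cite{GSY2,TT} its $\CC^*$-localized virtual class coincides with the one of Theorem~\ref{TheVir}, so Theorem~\ref{TheComp} applies. After push-forward into $\Hilb^n_\beta(S)$, the Seiberg--Witten $0$-cycles $\SW(\beta^i)$ factor out as integer coefficients, and the remaining integral is supported on the product $S^{[n_0]}\times\cdots\times S^{[n_{r-1}]}$.

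Its integrand is a polynomial in Chern classes of the tautological complexes $R\pi_*\O_S(\beta^i)$ and $\RHom_\pi(\I^{[n_{i-1}]},\I^{[n_i]}(\beta^i))$, together with analogous complexes furnishing the $\sqrt{y}$-deformation of $1/e(N^{\vir})$ prescribed by the refined definition of \cite{MT}. Grothendieck--Riemann--Roch reduces all these to tautological polynomials built from the universal ideals $\I^{[n_i]}$ and from pull-backs of line bundles on $S$. At this point I invoke the multiplicativity observation of \cite{Goe}: top-degree integrals of such tautological polynomials on products of punctual Hilbert schemes, weighted by $q^{c_2}$ with $c_2$ a quadratic form in $n_i$ and $\beta^i$, yield generating series whose logarithm is linear in the only available topological invariants, namely $\chi(\O_S)$, $K_S^2$ and the pairwise intersections $\beta^i\beta^j$ for $1\leq i\leq j\leq r-1$. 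Exponentiation produces exactly the factorization $A^{\chi(\O_S)}B^{K_S^2}\prod_{i\leq j}C_{ij}^{\beta^i\beta^j}$ of~\eqref{EqGenSer}, with universal Laurent series $A,B,C_{ij}\in\QQ(\sqrt{y})(\!(q^\frac{1}{2r})\!)$ depending only on $r$; summing over $\beta$ compatible with $c_1$ gives the stated formula.

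The main technical obstacle is verifying that every sheaf entering the refined integrand — the virtual cotangent complex of $S^{[n]}_\beta$, its virtual normal bundle, and the $\chi_{-y}$-style refinement of \cite{MT} — is of tautological form in precisely the sense required for the multiplicativity principle, so that universality holds jointly in $\sqrt{y}$ and in $q$. Once this compatibility is checked, organising the exponents into the three families $A$, $B$, $C_{ij}$ is automatic, as is the stability of the shape of the generating series under the $y$-refinement. The explicit determination of the universal series is then a separate problem, addressed in the sequel via toric computations.
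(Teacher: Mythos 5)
Your overall strategy is the same as the paper's: decompose $\M_{1^r}$ into nested Hilbert schemes, push the virtual class into $\Hilb^n_\beta(S)$ via Theorem~\ref{TheComp}, factor out the Seiberg--Witten numbers, and obtain the exponential form from multiplicativity of tautological integrals over products of Hilbert schemes of points in the style of \cite{EGL,Goe}. There are, however, two genuine gaps. First, $\Z_{S,r,c_1}$ is by definition a sum over components of the moduli space of \emph{stable} Higgs pairs, whereas the multiplicativity/universality machinery (and the claimed independence of the polarization) requires an \emph{unconstrained} sum over all pairs $(\beta,n)$ meeting the Chern class conditions. These two sums agree only because of a vanishing statement: whenever the family $(E_\L^{[n]},\phi_\L^{[n]})$ over $S^{[n]}_\beta$ has unstable fibres, some $\beta^i$ is not effective (or $\beta^i=0$ with $n_{i-1}>n_i$), and then either $\SW(\beta^i)=0$ or an Euler class factor in Theorem~\ref{TheComp} vanishes, so $i_*[S^{[n]}_\beta]^{\vir}=0$. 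This is the content of Lemmas~\ref{LemSSs} and~\ref{LemSs} and Proposition~\ref{PropGSs}, and your proposal never addresses it; without it the series you manipulate is not the one in the statement.

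Second, the cobordism argument a priori yields universal series indexed by \emph{all} Chern numbers of the pair $(S,\beta)$, which include the mixed terms $K_S\beta^i$ in addition to $\chi(\O_S)$, $K_S^2$ and $\beta^i\beta^j$; your assertion that the logarithm is linear in only the latter three families is not correct at that stage. Reducing to the shape \eqref{EqGenSer} uses that only tuples with $\SW(\beta^i)\neq0$ contribute and that Seiberg--Witten basic classes satisfy $(\beta^i)^2=K_S\beta^i$, so the $K_S\beta^i$-series can be absorbed into $C_{ii}$; the same identity is needed to put the leading term $F(S,\beta,y)$ (the $n=0$ normalization, computed in Proposition~\ref{PropFront}) into the form $A^{\chi(\O_S)}\prod C_{ij}^{\beta^i\beta^j}$. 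These are fixable omissions rather than wrong turns, but they are precisely the steps that make the theorem true as stated.
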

\begin{remark}
The condition $H^1(\O_S)=0$ in Theorem \ref{Theorem1} is superfluous. However, for expository reasons, we will work with this condition throughout the paper. Moreover, \eqref{EqGenSer} can be shown to hold for \emph{all} $c_1$ when we extend the definition of the LHS to the semistable case. This will be subject of future work.
\end{remark}

\begin{remark}
For odd rank $r$, the Laurent series have coefficients in $\QQ(y)$, rather than in $\QQ(\sqrt y )$ (see Proposition \ref{PropSqrt}).
\end{remark}

The following corollary is implicitly in the statement of Theorem \ref{Theorem1}.

\begin{corollary}
Fix a rank $r$, and let $S$ be a surface with $H^1(\O_S) = 0$ and $p_g(S)>0$. Let $c_1$ be a Chern classes, for which semistability implies stability for all $c_2$. Then $\Z_{S,r,c_1}(q,y)$ is independent of the choice of a polarization of the surface $S$.
\end{corollary}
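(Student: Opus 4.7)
The plan is to deduce the statement as a direct consequence of Theorem \ref{Theorem1}, by observing that every ingredient on the right-hand side of \eqref{EqGenSer} is manifestly independent of the polarization $H$.

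First, I would enumerate the factors appearing on the RHS of \eqref{EqGenSer}. The universal Laurent series $A$, $B$, and $C_{ij}$ depend only on the rank $r$, by the statement of Theorem \ref{Theorem1}, with no reference to $S$ or $H$ at all. The exponents $\chi(\O_S)$ and $K_S^2$ are numerical invariants of $S$ as a complex surface, and the intersection numbers $\beta^i\beta^j$ appearing in the exponent of $C_{ij}$ are topological pairings in $H^2(S,\ZZ)$. The constraint $c_1 \equiv \sum_i i\beta^i \mod r H^2(S,\ZZ)$ delimiting the range of summation involves only $c_1$, $r$, and the integral cohomology of $S$. Finally, the Seiberg-Witten invariants $\SW(\beta^i)$, in the algebraic setting described in \cite[Section~4]{GSY2}, are invariants of the underlying smooth four-manifold, and in particular are polarization-independent. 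Combining these observations shows that the entire RHS depends only on $S$ (as a smooth surface), $r$, and $c_1$.

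Then I would conclude: under the hypothesis of the corollary, Theorem \ref{Theorem1} applies for \emph{every} polarization $H$ on $S$, yielding one and the same expression on the RHS. Hence the LHS $\Z_{S,r,c_1}(q,y)$ must agree for any two such polarizations.

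The only subtlety, and where the statement actually has content, is that the moduli spaces $\M_{1^r,c_1,c_2}$ and therefore the individual coefficients $\VW_{1^r,c_1,c_2}(S,y)$ do in general depend on the choice of $H$ through the notion of Gieseker stability. The corollary asserts that these polarization dependences cancel once the coefficients are assembled into the generating series. The cancellation is already encoded in Theorem \ref{Theorem1}, so no further wall-crossing argument is required; the hard work has been done in establishing Theorem \ref{Theorem1} itself.
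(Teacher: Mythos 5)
Your proposal is correct and matches the paper's own treatment: the paper explicitly notes that this corollary ``is implicitly in the statement of Theorem~\ref{Theorem1},'' precisely because every ingredient on the right-hand side of \eqref{EqGenSer} --- the universal series depending only on $r$, the topological exponents $\chi(\O_S)$, $K_S^2$, $\beta^i\beta^j$, and the polarization-independent Seiberg--Witten invariants --- makes no reference to the polarization. Your closing remark correctly locates where the real work lies (the vanishing of contributions from unstable configurations, Proposition~\ref{PropGSs}, which is what lets the sum over all $(\beta,n)$ be polarization-independent), and this is exactly the mechanism the paper relies on.
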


We will define the Laurent series in the theorem explicitly in terms of tautological integrals over products of Hilbert schemes of points on the surface $S$ (see Sections~\ref{SecTrace},~\ref{SecUniv}~and~\ref{SecProof}). Although for surfaces with $\deg(K_S) < 0$, the locus $\M_{1^r}$ is empty by stability, the Hilbert schemes and the integrals are still defined. We will prove universality of these integrals for \emph{all} surfaces (Proposition \ref{PropUniv}). As usual \cite{Goe}, the coefficients of the power series can be determined by evaluating these integrals on $\PP^2$ and $\PP^1\times \PP^1$, where we have access to toric methods, as we explain in Section~\ref{SecComp}.

\subsection{G\"ottsche-Kool conjectures}
In \cite{GK}, Lothar G\"ottsche and Martijn Kool conjecture a formula for the generating series of the $\chi_y$-genus of the instanton branch for rank $2$ and $3$. Moreover they conjecture, motivated by $S$-duality \cite{VW}, that the generating series of refined Vafa-Witten invariants has modular properties that relate the contributions of the instanton branch to those of the monopole branch. Using this, they give a conjectural formula for the contribution of the monopole branch to refined Vafa-Witten invariants of rank $2$ and $3$. For rank $2$, their conjectures refine the predictions in the physics literature \cite{VW}.

The formulas of \cite{GK} that predict the monopole contributions to the Vafa-Witten invariants in rank 2 and 3, have precisely the structure of the generating series \eqref{EqGenSer} of the $\M_{1^r}$ contributions. This suggests that $\M_{1^r}$ accounts for the entire monopole contribution.

\begin{conjecture}\label{ConjMono}
For $S$ and $c_1$ as in Theorem~\ref{Theorem1}, and $r$ prime, we have
\[\VW_{r,c_1,c_2}^{\mathrm{monopole}}(S,y) = \VW_{1^r,c_1,c_2}(S,y)\]
for all $c_2\in \ZZ$.
\end{conjecture}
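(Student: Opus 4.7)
The plan is to extend Thomas's theorem, which handles the unrefined case, to the refined setting. One first decomposes
\[
(\N_{r,M,c_2}^\bot)^{\CC^*} \;=\; \bigsqcup_{(r_0,\ldots,r_k)\vdash r}\M_{(r_0,\ldots,r_k)},
\]
so that the refined Vafa-Witten invariant decomposes as a sum over partitions, with each contribution $\VW_{(r_0,\ldots,r_k)}(S,y)$ specializing at $y=1$ to its unrefined counterpart. Thomas's theorem tells us that for prime $r$, every $\M_{(r_0,\ldots,r_k)}$ with $k\geq 1$ and $(r_0,\ldots,r_k)\neq (1,\ldots,1)$ contributes trivially to the unrefined invariant, and the task is to lift this vanishing to arbitrary $y$.

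First I would inspect Thomas's proof to determine whether it establishes emptiness of the ``bad'' components under the stability hypothesis—in which case the conjecture follows immediately since emptiness is preserved by any refinement—or instead a cohomological vanishing of the virtual contribution. Assuming the latter, my approach would be to decompose the virtual normal bundle $N^\vir$ on each bad component according to its $\CC^*$-weights and reinterpret Thomas's vanishing of Chern classes in terms of the $\lambda_{-y}$-classes that are the natural K-theoretic carriers of the refinement, and which specialize to Chern classes at $y=1$. The hope is that if Thomas's argument identifies a direct summand of $N^\vir$ of a prescribed equivariant type that forces its Euler class to vanish, the same structural feature forces vanishing of the associated K-theoretic class.

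An alternative, better suited to the methods of the present paper, is to establish a universality theorem for $\VW_{(r_0,\ldots,r_k)}(S,y)$ analogous to Theorem \ref{Theorem1} for every monopole partition, using a version of Theorem \ref{TheComp} adapted to these other fixed loci. Once each contribution is expressed as a sum of Seiberg-Witten numbers multiplied by universal Laurent series in $y$ and $q$, the claimed vanishing for $(r_0,\ldots,r_k)\neq (1,\ldots,1)$ reduces to vanishing of finitely many universal series, which can be tested on toric surfaces such as $\PP^2$ and $\PP^1\times\PP^1$ (after the usual trick to drop the $p_g>0$ hypothesis) where equivariant localization makes the integrals computable.

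The main obstacle is the K-theoretic upgrade of Thomas's vanishing. Cohomological degree or rank arguments do not automatically lift, since $\lambda_{-y}E^\vee$ need not vanish for bundles of arbitrary rank, and identifying the correct structural feature of $N^\vir$ on the bad components—presumably some residual $\CC^*$-symmetry inherited from the block structure of the Higgs field when some $r_i\geq 2$—is the substantive point. The computational route via universality avoids this conceptual obstacle but shifts the difficulty to verifying vanishing of explicit toric integrals for arbitrary prime $r$, for which one does not presently have a closed formula.
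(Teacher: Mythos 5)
There is a genuine gap, and in fact two issues worth separating. First, a point of orientation: the paper does not prove this statement at all. It is recorded as Conjecture~\ref{ConjMono}, and Theorem~\ref{TheThoVan} simply cites Thomas's paper \cite{T} for the proof; moreover Thomas's result is proved there directly in the refined ($y$-dependent, K-theoretic) setting, so your framing --- that Thomas's theorem ``handles the unrefined case'' and the task is to lift it to arbitrary $y$ --- starts from a premise that does not match what is actually available or needed. There is no unrefined-to-refined lifting problem to solve; the substantive content is the vanishing of the refined contributions of the components $\M_{(r_0,\ldots,r_k)}$ with some $r_i\geq 2$ for prime $r$, and that has to be established in K-theory from the outset.

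Second, and more importantly, your proposal is not a proof but a plan, and both of its branches terminate exactly at the point where the mathematical content lives. The first route says ``the hope is that'' the structural feature of $N^{\vir}$ forcing the cohomological vanishing also forces the K-theoretic vanishing, while your own closing paragraph correctly observes that rank and degree arguments do not lift (a class $\Lambda^\bullet E^\vee$ of a positive-rank bundle need not vanish), and you do not identify the actual mechanism --- so the central step is explicitly left open. The second route requires (i) an analogue of Theorem~\ref{TheComp} for the fixed loci with higher-rank weight spaces, which is not available (the Gholampour--Thomas formula is for nested Hilbert schemes, i.e.\ for $\M_{1^r}$ only), (ii) a universality theorem for those contributions, and (iii) verification that infinitely many universal series vanish identically, for which finitely many toric computations can only ever give evidence, not a proof. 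As it stands, neither route establishes the statement, and the conjecture should be regarded as proved only by the external argument of \cite{T}.
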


The conjecture has now been proved by Thomas in \cite{T}.

\begin{theorem}[Thomas]\label{TheThoVan}
Conjecture \ref{ConjMono} holds.
\end{theorem}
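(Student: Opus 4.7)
The plan is to prove a componentwise vanishing: for prime $r$, every stratum $\M_{(r_0,\ldots,r_k)}$ of the $\CC^*$-fixed locus with $k\geq1$ and $(r_0,\ldots,r_k)\neq(1,\ldots,1)$ contributes zero to the refined Vafa-Witten invariant. Once this vanishing is established, the monopole branch contribution is exhausted by $\M_{1^r}$, whose contribution has been analyzed in Theorem \ref{Theorem1}, so the identity $\VW_{r,c_1,c_2}^{\mathrm{monopole}}=\VW_{1^r,c_1,c_2}$ becomes automatic. The first step would be simply to stratify the monopole branch by weight-multiplicity tuples; after that, the whole burden is on the stratumwise vanishing.

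A natural first attempt is to exploit the decomposition $E=\bigoplus_i E_i\otimes \t^{-i}$, $\phi=(\phi_i)$ of the universal Higgs pair on a given stratum, and to extract extra structure on the obstruction complex \eqref{EqCone} that forces vanishing. Two mechanisms come to mind. The first is to produce a canonical cosection of the obstruction sheaf, so that Kiem--Li cosection localization kills the virtual class. The second is to construct an auxiliary $\CC^*$-action on $\M_{(r_0,\ldots,r_k)}$ whose fixed locus lies inside a deeper stratum already accounted for, so that an inductive equivariant localization collapses the contribution. The primality of $r$ should enter through the arithmetic of the partition: it is precisely when $(r_0,\ldots,r_k)=(1,\ldots,1)$ that the trace-free and fixed-determinant constraints rigidify the weight-space data enough to preclude such extra structure, whereas for composite $r$ balanced tuples such as $(r/d,\ldots,r/d)$ spoil the argument.

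The main obstacle is that the invariant of \cite{MT} is not a numerical integral against the virtual class but a $K$-theoretic integral against the symmetrized virtual structure sheaf. Any vanishing argument for the unrefined invariant, obtained by cosections or auxiliary tori, yields only the unrefined statement and must be lifted to the $K$-theoretic level. One would have to work directly with the equivariant obstruction complex and its symmetrization, arranging that the extra structure produces a canonically trivial $K$-theoretic factor rather than merely a zero virtual class. This is the technically delicate step and is the content of \cite{T}. Once it is in place, the theorem follows by combining the stratumwise vanishing with Theorem \ref{Theorem1}, since the only stratum of the monopole branch that survives is $\M_{1^r}$.
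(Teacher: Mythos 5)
The paper does not prove Theorem \ref{TheThoVan} at all: it is imported verbatim from \cite{T} (``The conjecture has now been proved by Thomas''), so there is no internal argument to compare yours against. Your opening reduction is correct and is essentially definitional: the monopole branch is by definition the union of the strata $\M_{(r_0,\ldots,r_k)}$ with $k\geq 1$, so if every such stratum with $(r_0,\ldots,r_k)\neq(1,\ldots,1)$ contributes zero, the identity $\VW^{\mathrm{monopole}}_{r,c_1,c_2}=\VW_{1^r,c_1,c_2}$ follows immediately (Theorem \ref{Theorem1} is not needed for this step).

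Everything after that reduction is a gap. You name two candidate mechanisms --- a cosection of the obstruction sheaf killed by Kiem--Li localization, or an auxiliary $\CC^*$-action whose fixed locus lies in a deeper stratum --- but you construct neither: no cosection is exhibited on a stratum containing a weight space $E_i$ of rank $\geq 2$, no auxiliary action is defined, and no reason is given why such extra structure should exist on those strata yet fail on $\M_{1^r}$ and on the instanton branch. The role of primality is likewise only asserted; ``the trace-free and fixed-determinant constraints rigidify the weight-space data'' is a heuristic, not an argument, and it does not explain why, say, the stratum $(2,1)$ in rank $3$ contributes zero while $(2,2)$ in rank $4$ need not. You correctly flag that any unrefined vanishing must be lifted to the $K$-theoretic level of the symmetrized virtual structure sheaf of \cite{MT}, but then you write that this delicate step ``is the content of \cite{T}'' --- which is precisely the theorem you were asked to prove. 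The proposal therefore reduces the statement to itself and cites the reference for the remainder; as a proof it is circular, and as a reconstruction of Thomas's argument it stops before the point where the actual work (and the actual use of primality) begins.
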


It follows that Theorem \ref{Theorem1} and Theorem \ref{TheThoVan} prove the \emph{structure} of \cite[Conjecture 1.5]{GK}, generalized to arbitrary prime rank. The rank $2$ and $3$ conjectures of \cite{GK} give the universal series  appearing in the formula explicitly in terms of functions
\[\phi_{-2,1}(x,y),\, \Delta(x),\, \Theta_{A_2,(1,0)}(x,y),\, \eta(x),\,\theta_2(x,y),\,\theta_3(x,y),\,\text{and}\,\, W_\pm(x,y),\]
which we give in Appendix \ref{AppFun}.  The following conjectures imply \cite[Remark 1.7 and Conjecture 1.5]{GK}.
\begin{notation}
In order to emphasize the dependency on $r$, we will write $A^{(r)}, B^{(r)},\ldots$ for the series appearing in Theorem \ref{Theorem1}.
\end{notation}

\begin{conjecture}\label{ConjR2}
For rank $2$, the universal series appearing in Theorem \ref{Theorem1}, and defined in Section~\ref{SecProof}, are given by
\begin{align*}
A^{(2)}(y)
	&= \frac{y^{\frac{1}{2}}-y^{-\frac{1}{2}}}{\phi_{-2,1}(q^2,y^2)^\frac{1}{2}\, \tilde\Delta(q^2)^\frac{1}{2}}\,,\\
B^{(2)}(y)
	&= \frac{\tilde\eta(q)^2}{\theta_3(q,y)}\,,\\
C^{(2)}_{11}(y)
	&= \frac{-\theta_3(q,y)}{\theta_2(q,y)}\,.
\end{align*}
\end{conjecture}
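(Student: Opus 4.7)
The plan is to reduce Conjecture~\ref{ConjR2} to a finite, explicit toric verification, combined with an appeal to the modular structure predicted by $S$-duality. By Theorem~\ref{Theorem1}, the three series $A^{(2)}$, $B^{(2)}$, $C^{(2)}_{11}$ are universal and, by the universality statement referenced for Section~\ref{SecProof} (which extends the tautological integrals to \emph{all} smooth surfaces, not only those with $p_g>0$), they are defined by tautological integrals over products of Hilbert schemes of points. This decouples the problem from the stability hypothesis and lets us work on $\PP^2$ and $\PP^1\times\PP^1$, where $\M_{1^2}$ is empty but the defining integrals still make sense.

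First I would write the coefficients of $A^{(2)}$, $B^{(2)}$, $C^{(2)}_{11}$ as specific tautological integrals on $S^{[n_0]}\times S^{[n_1]}\times|\O_S(\beta^1)|$, obtained by expanding the $K$-theoretic expression of Theorem~\ref{TheComp}. On a toric surface the $(\CC^*)^2$-action lifts, and Atiyah--Bott localization reduces each such integral to a combinatorial sum over pairs of partitions at the torus-fixed points, indexed by the fixed points of $S$. Since $A^{(2)}$, $B^{(2)}$, $C^{(2)}_{11}$ appear with exponents $\chi(\O_S)$, $K_S^2$, and $\beta^1\cdot\beta^1$, one extracts them from the toric data on $\PP^2$ (with $\beta^1=dH$) and $\PP^1\times\PP^1$ (with $\beta^1=aF_1+bF_2$) by solving a linear system in these exponents, order by order in $q$ and $y$.

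Next I would compare the resulting expansions against the proposed closed forms by $q$-expanding $\phi_{-2,1}$, $\tilde\Delta$, $\tilde\eta$, $\theta_2$, and $\theta_3$ from Appendix~\ref{AppFun}. This step is routine and would confirm the identity modulo $q^N$ for any desired $N$, which is the \emph{agreement} referenced in the abstract.

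The hard part is promoting finite-order matching to an identity of formal Laurent series. The natural route is to first prove that $\Z_{S,2,c_1}(q,y)$ is, after an appropriate normalization, a Jacobi form of specified weight and index, as predicted by the $S$-duality framework of \cite{GK}; since the conjectured right-hand sides are built from such Jacobi forms, matching the principal parts would then suffice. Establishing this modularity directly from the geometric definition lies well outside the toric localization methods available here, which is why the statement must remain a conjecture in the present paper.
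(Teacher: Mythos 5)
This statement is a conjecture in the paper: the paper offers no complete proof of it, only a verification of the first 15 coefficients (Theorem~\ref{Theorem2}) obtained by exactly the universality-plus-toric-localization argument you describe, solving a linear system over the basis surfaces $\PP^2$ and $\PP^1\times\PP^1$ and comparing $q$-expansions. Your proposal matches that approach and correctly identifies why it cannot be upgraded to a full proof here --- the required modularity input from $S$-duality is not established --- so the only (minor) omission is that the linear system must also determine the series $A^{(2)}_{\underline{K_S\beta^1}}$ (hence the basis elements $[\PP^2,\omega_S^{\pm1}]$ in Section~\ref{SecComp}), since $C^{(2)}_{11}$ is built from both $A^{(2)}_{\underline{\beta^1\beta^1}}$ and $A^{(2)}_{\underline{\beta^1 K_S}}$.
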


\begin{conjecture}\label{ConjR3}
For rank $3$, we have
\begin{align*}
A^{(3)}(y)
	&= \frac{y^\frac{1}{2} - y^{-\frac{1}{2}}}{\phi_{-2,1}(q^3,y^3)^\frac{1}{2}\, \tilde\Delta(q^3)^\frac{1}{2}} \,,\\
B^{(3)}(y)
	&= \frac{\tilde\eta(q)^3\, W_-(q^\frac{1}{2},y)}{\Theta_{A_2,(1,0)}(q^\frac{1}{2},y)}\,,\\
C_{12}^{(3)}(y)
	&= W_+(q^\frac{1}{2},y)\, W_-(q^\frac{1}{2},y)\,,\\
C_{11}^{(3)}(y)
	&= C_{22}^{(3)}(y) \\
	&= \frac{1}{W_-(q^\frac{1}{2},y)}\,.
\end{align*}
\end{conjecture}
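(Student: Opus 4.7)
The plan is to leverage Theorem~\ref{Theorem1} and the fact that the series $A^{(3)}, B^{(3)}, C_{11}^{(3)}, C_{12}^{(3)}, C_{22}^{(3)}$ depend only on the rank $r=3$, so that determining them reduces to evaluating the tautological integrals defining them on any convenient family of surfaces. Since Proposition~\ref{PropUniv} (referenced in the introduction) extends universality of these integrals to all surfaces --- including those with $\deg K_S < 0$ where $\M_{1^r}$ is empty --- I would take $S = \PP^2$ and $S = \PP^1 \times \PP^1$, where toric equivariant localization is available, in the spirit of \cite{Goe}.

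The first computational step is to carry out $(\CC^*)^2$-localization of the integrals obtained from the Gholampour--Thomas formula of Theorem~\ref{TheComp}. The fixed points of $S^{[n_0]}\times S^{[n_1]}\times S^{[n_2]}$ are labelled by tuples of partitions at each torus-fixed point of $S$, and the $K$-theoretic Euler class $e(R\pi_*\O(\beta_i) - \RHom_\pi(\I^{[n_{i-1}]},\I^{[n_i]}(\beta_i)))$ reduces there to an explicit combinatorial expression in the equivariant parameters. Varying $S$ through $\PP^2$, $\PP^1 \times \PP^1$ and their one-point blowups separates the exponents of $A^{(3)}$ and $B^{(3)}$ via the independent topological inputs $\chi(\O_S)$ and $K_S^2$, while choices of $\beta = (\beta^1, \beta^2)$ with distinct intersection patterns isolate the three bilinear series $C_{11}^{(3)}, C_{12}^{(3)}, C_{22}^{(3)}$. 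The predicted equality $C_{11}^{(3)} = C_{22}^{(3)}$ should already be visible at the level of the integrand through a Serre-duality-type symmetry swapping $(\beta^1,\beta^2)$ with $(\beta^2,\beta^1)$ together with tensoring by $\omega_S$, exploiting the vanishing of the generalized Carlsson--Okounkov class beyond its rank recorded in Remark~\ref{RemOkVan}.

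The second step is to expand the conjectured right-hand sides --- built from $\phi_{-2,1}$, $\tilde\Delta$, $\tilde\eta$, $\Theta_{A_2,(1,0)}$ and $W_\pm$ of Appendix~\ref{AppFun} --- as Laurent series in $q^{1/6}$ and $y^{1/2}$, and match coefficient by coefficient against the output of the toric calculation. As the predicted series has weight and index prescribed by $S$-duality \cite{GK,VW}, agreement even to moderate order is highly constrained and constitutes strong evidence.

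The main obstacle is intrinsic: this procedure only verifies the conjecture \emph{up to a prescribed finite order} in $q$, since it compares two infinite Laurent series and the toric sum is inductive in $(n_0,n_1,n_2)$ and $(\beta^1,\beta^2)$. Promoting the finite-order verification to a full proof would require an additional input, most naturally a proof that $\Z_{S,3,c_1}(q,y)$ is a Jacobi form of the weight and index predicted by $S$-duality, at which point finitely many coefficients would suffice. Absent such a modularity statement --- which would have to come from an independent source, e.g.\ wall-crossing or a categorified refinement --- the approach of this paper is confined to checking Conjecture~\ref{ConjR3} to any desired order.
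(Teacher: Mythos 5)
Your proposal follows essentially the same route as the paper: the statement is a conjecture, and the paper's only supporting argument is exactly the universality-plus-toric-localization scheme you describe (Proposition~\ref{PropUniv}, Section~\ref{SecComp}), evaluated on $\PP^2$ and $\PP^1\times\PP^1$ with suitable tuples of line bundles and matched coefficient by coefficient against the expansions of $W_\pm$, $\Theta_{A_2,(1,0)}$, etc. You also correctly identify the intrinsic limitation: the paper obtains only the finite-order verification recorded in Theorem~\ref{Theorem2} (agreement modulo $U^{(3)}_{11}$), and Conjecture~\ref{ConjR3} itself remains unproved.
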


\subsection{Toric computations}

As remarked before, the universality result of Proposition \ref{PropUniv} allows us to determine the first few terms of the power series of Theorem \ref{Theorem1} by toric computations. I~implemented the Atiyah-Bott localization formula for the surfaces $\PP^2$ and $\PP^1\times\PP^1$ in Sage \cite{Sage}  and found agreement with Conjectures \ref{ConjR2} and \ref{ConjR3}.

Define multiplicative subgroups
\[U^{(r)}_N \coloneqq 1 + q^N \,\QQ(y^\frac{1}{2})[[q]]\subset \QQ(y^\frac{1}{2})(\!(q^\frac{1}{2r})\!)^*\,.\]
for all $r,N\geq 1$, and consider series
\begin{align*}
P
	& = c q^{\frac{z}{2r}}(1+p_1 q + p_2 q^2 + \ldots) \quad \text{and}\\
P'
	& = c' q^{\frac{z'}{2r}}(1+p'_1 q + p'_2 q^2 + \ldots)
\end{align*}
with $c, c' \in \QQ(\sqrt y)^*$, $p_i, p'_i \in \QQ(\sqrt y)$ and $ z, z'\in \ZZ$. The Laurent series appearing in Theorem~\ref{Theorem1} and Conjectures \ref{ConjR2} and \ref{ConjR3} are all of this form. Then we have
\[P \equiv P' \mod{}U^{(r)}_{N+1}  \quad  \text{(i.e. }P'P^{-1} \in U^{(r)}_{N+1}\text{)}\]
if and only if
\[c = c', \quad z =  z', \quad \text{and}\quad p_1 = p'_1,\ldots,\, p_N = p'_N\,.\]

\begin{result} \label{Theorem2}
Let $S$ be a surface with $H^1(\O_S) = 0$ and $p_g(S)>0$.
The rank $2$ conjectures of \cite{GK} correctly predict the first 15 terms of the universal series of Theorem \ref{Theorem1}. The rank $3$ conjectures correctly predict the first 11 terms.
In other words, the equations of Conjecture \ref{ConjR2} hold modulo $U^{(2)}_{15}$,
and the equations of Conjecture \ref{ConjR3} hold modulo $U^{(3)}_{11}$.
\end{result}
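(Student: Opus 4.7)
The plan is to exploit the universality of the tautological integrals that define the series $A^{(r)}$, $B^{(r)}$, $C_{ij}^{(r)}$ (Proposition~\ref{PropUniv}), so that the identities of Conjectures~\ref{ConjR2} and~\ref{ConjR3} reduce to a finite equivariant computation on the toric surfaces $\PP^2$ and $\PP^1\times\PP^1$.

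First I would use the generating-series expression~\eqref{EqGenSer} of Theorem~\ref{Theorem1}: truncating everything modulo $U^{(r)}_{N+1}$ (with $N=15$ for $r=2$, and $N=11$ for $r=3$) reduces verification of each conjecture to checking finitely many numerical identities among the low-order coefficients of $A^{(r)}$, $B^{(r)}$, $C_{ij}^{(r)}$. By Proposition~\ref{PropUniv}, each such coefficient is given by a universal polynomial in $\chi(\O_S)$, $K_S^2$, and the intersection numbers $\beta^i\beta^j$; crucially, this polynomial structure holds for \emph{all} surfaces, not only those with $p_g(S)>0$ and Seiberg-Witten basic classes. Consequently the universal coefficients can be read off by evaluating the defining integrals on any sufficiently rich collection of test data $(S,c_1,\beta)$ with $S\in\{\PP^2,\,\PP^1\times\PP^1\}$ and various toric $\beta^i$.

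Next I would compute these integrals by Atiyah-Bott localization with respect to the standard torus actions on $\PP^2$ and $\PP^1\times\PP^1$. The $T$-fixed points of $\Hilb_\beta^{n}(S)$ are enumerated by tuples of monomial ideal sheaves, together with torus-invariant effective divisors representing the $\beta_i$. At each such fixed point, the character of the $K$-theory class
\[R\pi_*\O_S(\beta_i)-\RHom_\pi\bigl(\I^{[n_{i-1}]},\I^{[n_i]}(\beta_i)\bigr)\]
featured in Theorem~\ref{TheComp}, as well as the $\chi_y$-refined contribution from the virtual normal bundle entering the refined invariant of~\cite{MT}, can be written down explicitly from standard tangent-character formulas at monomial ideals. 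Summing over fixed points and varying the test data extracts the coefficients of $A^{(r)}$, $B^{(r)}$, $C_{ij}^{(r)}$ order by order. I would then series-expand the right-hand sides of Conjectures~\ref{ConjR2} and~\ref{ConjR3} from the definitions of $\phi_{-2,1}$, $\tilde\Delta$, $\tilde\eta$, $\theta_2$, $\theta_3$, $\Theta_{A_2,(1,0)}$ and $W_\pm$ in Appendix~\ref{AppFun}, and compare coefficient by coefficient.

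The main obstacle is not conceptual but computational. The number of torus-fixed loci in $\Hilb_\beta^{n}(S)$ grows rapidly with $n_0+\cdots+n_{r-1}$, and each contributes a rational function in $y^{\frac12}$ and the equivariant parameters of substantial complexity; reaching orders $q^{15}$ and $q^{11}$ requires a careful implementation with aggressive caching and intermediate simplification, which I carried out in Sage~\cite{Sage}. A secondary subtlety is bookkeeping the change of variables and powers of $y$ between the localization output and the normalization of the generating series, together with the mod-$r$ condition $c_1\equiv\sum_i i\beta^i$ that restricts which $\beta$-tuples contribute for a given $c_1$; these must be tracked consistently across both test surfaces so that the resulting linear system in the unknown universal coefficients is nondegenerate up to the claimed order.
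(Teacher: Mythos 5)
Your proposal follows essentially the same route as the paper: Proposition~\ref{PropUniv} reduces the verification to finitely many universal coefficients, which are extracted by Atiyah--Bott localization on $\PP^2$ and $\PP^1\times\PP^1$ (implemented in Sage, solving a nondegenerate linear system over a basis of the cobordism group of surfaces with tuples of line bundles) and compared with the $q$-expansions of the modular forms in Appendix~\ref{AppFun}. The only cosmetic difference is that the paper first uses the $\SW(\beta)$ factor to reduce the integrals $Q_n(S,\beta)$ to $\Hilb^n(S)=S^{[n_0]}\times\cdots\times S^{[n_s]}$ and computes the leading factor $F(S,\beta,y)$ in closed form, rather than localizing over the full ambient space including the linear systems.
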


\subsection{K3 surfaces}
Let $S$ be a K3 surface. Then $0\in H^2(S,\ZZ)$ is the only Seiberg-Witten basic class of $S$, and for $c_1 = 0$, equation \eqref{EqGenSer} becomes
\[``\Z_{S,r,0}(q,y)" = \left(A^{(r)}\right)^2\,.\]
Note that in our setting, the left-hand side has not been defined for $r>1$, due to the existence of strictly semistable sheaves. Hence we cannot apply Theorem \ref{Theorem1} directly to determine the power series $A^{(r)}$. We can, however, evaluate on $S$ the tautological integrals that are used to define the universal series $A^{(r)}$. This leads to the following result, which we prove in Section~\ref{SecProofC}.

\begin{result}\label{Theorem4}
We have
\begin{equation}\label{EqK3GenSer}
A^{(r)}(y) = \frac{y^\frac{1}{2} - y^{-\frac{1}{2}}}{\phi_{-2,1}(q^r,y^r)^\frac{1}{2}\, \tilde\Delta(q^r)^\frac{1}{2}}
\end{equation}
for any $r\geq1$.
\end{result}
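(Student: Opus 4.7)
The plan is to evaluate the universal identity \eqref{EqGenSer} on a K3 surface, where the right-hand side collapses. K3 surfaces have $\chi(\O_S) = 2$, $K_S^2 = 0$, $p_g(S) = 1 > 0$ and $H^1(\O_S) = 0$, and their only Seiberg--Witten basic class is $\beta = 0$ with $\SW(0) = 1$. Since $H^2(S,\ZZ)$ is torsion-free and $c_1 = 0$, the constraint $c_1 \equiv \sum_i i\beta^i \bmod r\, H^2(S,\ZZ)$ is automatic, and the only surviving tuple in the Seiberg--Witten sum is $\beta = (0,\ldots,0)$. The right-hand side of \eqref{EqGenSer} therefore collapses to $A^{\chi(\O_S)} B^{K_S^2} = (A^{(r)})^2$, which is precisely the squared quantity appearing in \eqref{EqK3GenSer}.

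The immediate obstruction to invoking Theorem \ref{Theorem1} directly is that strictly semistable Higgs pairs exist on K3 with $c_1 = 0$, so its hypotheses fail. To circumvent this I would use Proposition \ref{PropUniv}, which defines $A^{(r)}$ explicitly as a universal tautological integral over products of Hilbert schemes of points on \emph{any} surface, independently of the stability formalism. On K3, I would then interpret $(A^{(r)})^2$ as the value of this tautological generating series, indexed by integer tuples $(n_0, \ldots, n_{r-1})$ corresponding to the nested Hilbert schemes $S_0^{[n]}$ with all $\beta^i = 0$.

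For the explicit evaluation I would use Theorem \ref{TheComp} to realize the virtual classes of $S_0^{[n]}$ as top Chern classes of the $K$-theoretic differences $R\pi_*\O_S - \RHom_\pi(\I^{[n_{i-1}]},\I^{[n_i]})$ on $\prod_i S^{[n_i]}$. The resulting generating series admits a translation into vertex operator identities on the Fock space $\bigoplus_n H^*(S^{[n]})$ via the Carlsson--Okounkov formalism together with its $y$-refinement in the spirit of \cite{GK17}. On K3, the Heisenberg algebra relations combined with Göttsche's formula for the Hodge numbers of $S^{[n]}$ should collapse this into the claimed Jacobi-form expression. As a sanity check, the $r = 1$ case (the instanton branch, where $\M_{(1)} = S^{[n]}$) reduces to the classical Göttsche generating function for the $\chi_y$-genus of K3 Hilbert schemes.

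The principal obstacle will be controlling the $r$-nested structure: the substitution $(q, y) \mapsto (q^r, y^r)$ in the final formula must emerge from a resummation of the $r$-tuple partition data. Intuitively the nested structure amounts to colouring partitions with $r$ colours, but carrying out this combinatorial collapse precisely, and compatibly with the $y$-refined integrand, is where the bulk of the technical work will lie.
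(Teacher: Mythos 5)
Your setup coincides with the paper's: evaluate on a K3 surface, where only $\beta = (0,\ldots,0)$ survives and the right-hand side of \eqref{EqGenSer} collapses to $(A^{(r)})^2$; and, since strictly semistable pairs block a direct appeal to Theorem \ref{Theorem1}, work instead with the tautological integrals $Q_n(S,\beta,y)$ of Proposition \ref{PropUniv}. Up to that point you are on the paper's track.

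The gap is in the actual evaluation, which you defer to a Carlsson--Okounkov/Fock-space resummation of ``$r$-coloured partition data'' and yourself flag as the bulk of the work. The paper's computation needs no such machinery, because of two cancellations you have not identified. First (Lemma \ref{LemVanK3}): on K3 the pushed-forward virtual class $i_*[S^{[n]}]^{\vir}$ \emph{vanishes} unless $n_0=\cdots=n_s=k$, in which case it is the class of the small diagonal $S^{[k]}\subset S^{[k]}\times\cdots\times S^{[k]}$. This follows from Theorem \ref{TheComp} together with Serre duality, which identifies $i_*[S^{[n_0,\ldots,n_s]}]^{\vir}$ with $\pm\, j_*[S^{[n_s,\ldots,n_0]}]^{\vir}$; one of the two oppositely nested Hilbert schemes is empty unless all $n_i$ agree. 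It is this vanishing, not a resummation, that produces the variable $q^r$: only $|n|=rk$ contributes. Second, restricting $T^{[n]}_{L,0}$ to the diagonal gives
\[
T_{S^{[k]}} \;+\; \sum_{i=1}^{s}\left(T_{S^{[k]}}\otimes\t^{-i} - T_{S^{[k]}}\otimes\t^{i}\right) \;-\; T_{S^{[k]}}\otimes\t^{r}\,,
\]
and since $T_{S^{[k]}}$ is self-dual of even rank the middle terms contribute trivially both to the Euler class and to the refined integrand. The single surviving term $-T_{S^{[k]}}\otimes\t^{r}$ is what produces $y^r$: the integral reduces to $\chi_{-y^{r}}(S^{[k]})$, and the G\"ottsche--Soergel formula for $\sum_k \chi_{-y}(S^{[k]})q^k$, combined with the explicit factor $F_0^{(r)}(y)=(-1)^s/[r]_y$, finishes the proof. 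Without these two observations your proposed combinatorial collapse is not a routine verification but an open-ended computation, and there is no evidence it would close.
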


\begin{remark}
Refined Vafa-Witten invariants of K3 surfaces have been computed in \cite{T}. In a future paper, we will extend Theorem~\ref{Theorem1} to the semistable case, so
Theorem~\ref{Theorem4} will follow also from the results of loc.~cit..
\end{remark}

\subsection{A special case}

Let $S$ be a surface with $H^1(\O_S) = 0$ and $p_g(S)>0$, and assume the Picard group
\[\Pic(S) = \ZZ\cdot [C]\]
of $S$ is generated by a smooth very ample canonical curve $C\in |K_S|$. Let $c_1 = K_S$. For rank $3$, only  $\beta = (K_S,0)$ contributes to the right-hand side of \eqref{EqGenSer} (see Lemma \ref{LemBasic}). In rank $2$, and in a slightly more general setting \cite{TT}, the only contribution is given by $\beta = (K_S)$. By Theorem \ref{Theorem1}, we have
\begin{equation}\label{EqBasic}
\Z_{S,r,K_S}(q,y)= \left(-A^{(r)}(y)\right)^{\chi(\O_S)} \left(B^{(r)}(y)C_{11}^{(r)}(y)\right)^{K_S^2}
\end{equation}
for $r=2,3$. Here we have used the equation
\begin{equation}\label{EqSWKS}
\SW(K_S) = (-1)^{\chi(\O_S)}
\end{equation}
by e.g.\ \cite[Proposition 6.3.4]{Mo}. In this setting, our computations are slightly faster, and we find the following result.

\renewcommand\theresult{B\ensuremath{'}}
\begin{result} \label{Theorem3}
Let $S$ be a surface with $H^1(\O_S) = 0$ and $p_g(S)>0$, and assume that the Picard group of $S$ is generated by a smooth very ample canonical curve. Then we have
\begin{align*}
\Z_{S,2,K_S}(q,y)
	& \equiv \left(\frac{-(y^{\frac{1}{2}}-y^{-\frac{1}{2}})}{\phi_{-2,1}(q^2,y^2)^\frac{1}{2} \tilde\Delta(q^2)^\frac{1}{2}}\right)^{\chi(\O_S)}
	\left(\frac{-\tilde\eta(q)^2}{\theta_2(q,y)}\right)^{K_S^2}  \mod U^{(2)}_{17}\,, \\
\Z_{S,3,K_S}(q,y)
	& \equiv \left(\frac{-(y^\frac{1}{2} - y^{-\frac{1}{2}})}{\phi_{-2,1}(q^3,y^3)^\frac{1}{2} \tilde\Delta(q^3)^\frac{1}{2}}\right)^{\chi(\O_S)}
	\left(\frac{\tilde\eta(q)^3}{\Theta_{A_2,(1,0)}(q^\frac{1}{2},y)}\right)^{K_S^2}  \mod U^{(3)}_{14} \,.
\end{align*}
\end{result}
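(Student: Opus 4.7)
The plan is to combine an algebraic collapse of the right-hand side of \eqref{EqGenSer} in this special case with an extension of the toric verifications behind Theorem~\ref{Theorem2}.

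First, under the hypothesis $\Pic(S) = \ZZ \cdot [K_S]$, combined with Lemma~\ref{LemBasic} and the fact that the Seiberg--Witten basic classes of $S$ lie in $\{0, K_S\}$, the sum over $\beta$ in \eqref{EqGenSer} collapses: only $\beta = (K_S)$ survives in rank $r = 2$ and only $\beta = (K_S, 0)$ survives in rank $r = 3$. Combined with \eqref{EqSWKS}, this reproduces \eqref{EqBasic} and reduces the task to determining the two universal series $A^{(r)}(y)$ and the product $B^{(r)}(y)\, C_{11}^{(r)}(y)$. Substituting the formulas of Conjectures~\ref{ConjR2} and~\ref{ConjR3}, this product telescopes: in rank $2$ the factor $\theta_3(q,y)$ cancels, yielding $-\tilde\eta(q)^2/\theta_2(q,y)$; in rank $3$ the factor $W_-(q^{1/2}, y)$ cancels, yielding $\tilde\eta(q)^3/\Theta_{A_2,(1,0)}(q^{1/2}, y)$. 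These match the right-hand sides displayed in Theorem~\ref{Theorem3}, so Conjectures~\ref{ConjR2} and~\ref{ConjR3} formally imply Theorem~\ref{Theorem3}; by Theorem~\ref{Theorem2}, the statement holds already modulo $U^{(2)}_{15}$ and $U^{(3)}_{11}$.

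To sharpen the bounds to $U^{(2)}_{17}$ and $U^{(3)}_{14}$, the plan is to exploit the reduced number of unknowns: only the pair of universal series $(A^{(r)},\, B^{(r)} C_{11}^{(r)})$ enters the right-hand side of \eqref{EqBasic}, rather than the triple $(A^{(r)}, B^{(r)}, C_{11}^{(r)})$ in rank $2$, or the quintuple additionally involving $C_{12}^{(3)}$ and $C_{22}^{(3)}$ in rank $3$. By the universality result (Proposition~\ref{PropUniv}), the relevant coefficients can be isolated from tautological integrals evaluated on the toric surfaces $\PP^2$ and $\PP^1 \times \PP^1$ via Atiyah--Bott localization; with fewer universal coefficients to disentangle per value of $c_2$, the same computational effort reaches higher orders in $q$. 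Performing the resulting localization computations for the extra values of $c_2$ needed to obtain the remaining two or three coefficients is the core work.

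The main obstacle is computational: the combinatorial complexity of the $\CC^*$-fixed loci in nested Hilbert schemes grows rapidly with $c_2$, so the Sage implementation of the Atiyah--Bott formula must be pushed noticeably beyond the range used for Theorem~\ref{Theorem2}, with careful bookkeeping of the contributions from each fixed locus. Once the extended computations confirm agreement with the predicted coefficients, Theorem~\ref{Theorem3} follows.
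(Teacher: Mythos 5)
Your proposal matches the paper's argument: the paper likewise reduces to \eqref{EqBasic} via Lemma~\ref{LemBasic} (and its rank-$2$ analogue) together with \eqref{EqSWKS}, observes that the conjectural series telescope to the displayed right-hand sides, and obtains the improved bounds $U^{(2)}_{17}$, $U^{(3)}_{14}$ precisely because only the combination $(A^{(r)},\,B^{(r)}C^{(r)}_{11})$ must be determined, so the Atiyah--Bott computations on $\PP^2$ and $\PP^1\times\PP^1$ can be pushed to higher order. No essential differences.
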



For $S$ a surface as in Theorem \ref{Theorem3}, and rank $r=2$, the moduli space $\M_{1^2,K_S,c_2}$ is smooth for $c_2\leq 3$. In \cite{TT} and \cite{T}, this is used to compute the Vafa-Witten invariant by direct intersection-theoretic calculations. The rank $2$ equation of Theorem \ref{Theorem3} is proved modulo $U^{(2)}_3$ in \cite{T}. In \cite{TT}, it is proved modulo $U^{(2)}_4$ in the unrefined case.

For rank $3$, the moduli space $\M_{1^3,K_s,c_2}$ is smooth if and only if $c_2\leq2$ (see Proposition \ref{PropSmoothMod}). This allows us to compute the Vafa-Witten invariants by the methods of \cite{TT,T}. As a result, we obtain an \emph{alternative proof}, by direct calculations, for the rank $3$ equation of Theorem \ref{Theorem3}, modulo $U^{(3)}_3$.

\subsection{Acknowledgement}
I thank Martijn Kool and Richard Thomas for sharing early drafts of their papers \cite{GK} and \cite{GT2} with Lothar G\"ottsche and Amin Gholampour respectively. I thank Amin Gholampour, Richard Thomas, and especially my PhD supervisor Martijn Kool for many useful discussions and suggestions.

This paper is partially based on work that was done at the MSRI in Berkeley, CA, during the Spring 2018 semester.

\section{The moduli space}\label{SecMod}
Let $S$ be a smooth projective surface with $H^1(\O_S) = 0$, and fix a rank $r$. As mentioned in the introduction, the locus $\M_{1^r}$ of Higgs pairs with $1$-dimensional weight spaces is a union of nested Hilbert schemes. In this section, we will introduce some notation and describe universal Higgs pairs over the connected components.

Write $s \coloneqq r-1$ and let $L = (L_0,\ldots,L_s)$ be an $r$-tuple of line bundles on $S$.
\begin{notation}\label{Notation}
Define classes
\begin{align*}
\beta_i
	= {} & c_1(L_{i}\otimes L_{i-1}^* \otimes \omega_S) \\
	\in {} & H^2(S,\ZZ)
\end{align*}
for $i=1,\ldots,s$, and write
\[\beta = \beta(L) =  (\beta_1,\ldots,\beta_s)\,. \]
We will also write
\[\beta^i = K_S - \beta_i \]
for $i=1,\ldots,s$ and an $s$-tuple $\beta = (\beta_1,\ldots,\beta_s) \in (H^2(S,\ZZ))^s$. In particular, when $\beta = \beta(L)$, we have
\[\beta^i =  c_1(L_{i-1}\otimes L_{i}^*)\]
for $i = 1,\ldots,s$.
\end{notation}

\begin{remark}
We will use the convention
\[s \coloneqq r-1\]
throughout the paper. Furthermore, $L$ will always denote an $s+1$-tuple of line bundles on $S$, and $\beta$ an $s$-tuple of classes in $H^2(S,\ZZ)$.
\end{remark}

Consider the product of complete linear systems
\begin{align*}
\Hilb_\beta(S)
	& = \Hilb_{\beta_1}(S)\times \cdots\times \Hilb_{\beta_{s}}(S)\\
	& = |\O_S(\beta_1)|\times\cdots\times |\O_S(\beta_s)|\,.
\end{align*}
and write
\[
\begin{tikzcd}
\vert \O_S(\beta_i)\vert \times S &  \Hilb_\beta(S) \times S \arrow[l,swap,"\pr_i"]\arrow[d,"\pi"] \arrow[r,"q"] & S \\
 & \Hilb_\beta(S) & 
\end{tikzcd}
\]
for the projections, where $i=1,\ldots,s$. We will write $\O_{\beta_i}(1)$ for the canonical line bundle on $|\O_S(\beta_i)|$.

Define the following line bundles on $\Hilb_\beta (S)\times S$:
\begin{align*}
\L_0 &\coloneqq L_0 \\
\L_1 &\coloneqq L_1 \otimes \pr_1^*\O_{\beta_1}(1) \\
&\cdots \\
\L_s &\coloneqq L_s \otimes \pr_1^*\O_{\beta_1}(1)\otimes\cdots\otimes \pr_s^*\O_{\beta_s}(1)\,.
\end{align*}
The tautological sections
\[\O_{|\O_S(\beta_i)|\times S} \rightarrow \O_{\beta_i}(1) \boxtimes \O_S(\beta_i)\]
induce maps
\[\phi_{\L,i}\colon\L_{i-1}\rightarrow \L_i\otimes q^*\omega_S\]
for $i=1,\ldots,s$.

Let $\t$ be an equivariant parameter for the trivial $\CC^*$-action on a point. Define the locally free sheaf
\[E_\L \coloneqq (\L_0\otimes \t^0) \oplus \ldots \oplus (\L_s\otimes \t^{-s})\,.\]
The maps $\phi_{\L,i}$ define a $\CC^*$-equivariant Higgs field
\[\phi_\L = (\phi_{\L,1},\ldots,\phi_{\L,s}):E_\L\rightarrow E_\L\otimes\omega_S\otimes\t\,.\]

Now choose non-negative integers $n = (n_0,\ldots,n_s)$ and write
\[\Hilb^{n}_\beta(S) = S^{[n_0]}\times\cdots\times S^{[n_s]}\times \Hilb_\beta(S)\]
as in the introduction. Let $\I^{[n_i]}$ denote the universal ideal sheaf on $S^{[n_i]}\times S$. Define the following sheaf on $\Hilb^{n}_\beta(S) \times S$, suppressing obvious pull-backs:
\[E_\L^{[n]} \coloneqq (\L_0\otimes\I^{[n_0]} \otimes \t ^0) \oplus \ldots \oplus (\L_s\otimes \I^{[n_s]} \otimes \t^{-s})\,.\]
The nested Hilbert scheme is by definition the maximal subscheme
\[i\colon S_\beta^{[n]}\hookrightarrow \Hilb^{n}_\beta(S)\]
such that $\phi_\L$ restricts to a Higgs field
\[\phi_\L^{[n]}:E_\L^{[n]}\rightarrow E_\L^{[n]}\otimes\omega_S\otimes\t\,.\]
\begin{remark}
Throughout the paper, the letter $n$ is reserved for $s+1$-tuples of non-negative integers, and $\Hilb_\beta^n(S)$ will always denote a product of Hilbert schemes as above.
\end{remark}
\begin{proposition}[\cite{GSY}, \cite{TT}]\label{PropMod}
Every connected component of $\M_{1^r}$ is uniquely represented by a triple
\[(S_\beta^{[n]},E_\L^{[n]},\phi_\L^{[n]})\,,\]
as constructed above.
\end{proposition}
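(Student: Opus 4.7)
The plan is to start from an arbitrary Higgs pair in $\M_{1^r}$, use the $\CC^*$-weight decomposition to extract the combinatorial data $(L,n)$ and a point of a nested Hilbert scheme $S_\beta^{[n]}$, and then show that the construction of $E_\L^{[n]}$ and $\phi_\L^{[n]}$ inverts this association, so that the induced morphism $S_\beta^{[n]}\to\M_{1^r}$ is an isomorphism onto a single connected component.

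First, given $(E,\phi)\in\M_{1^r}$, I take the weight decomposition $E=\bigoplus_{i=0}^s E_i\otimes\t^{-i}$ recalled in the introduction. Each $E_i$ is rank-$1$ torsion free on the smooth surface $S$, hence uniquely of the form $E_i\cong L_i\otimes\I_{Z_i}$ for a line bundle $L_i$ and a $0$-dimensional subscheme $Z_i\subset S$ of length $n_i$. The condition $\det E\cong M$ becomes $L_0\otimes\cdots\otimes L_s\cong M$, and $c_2(E)$ is determined by the $L_i$ together with $n=(n_0,\ldots,n_s)$. The hypothesis $H^1(\O_S)=0$ ensures that $\Pic(S)$ is discrete, so the tuple $L=(L_0,\ldots,L_s)$ is locally constant in flat families and hence constant on each connected component of $\M_{1^r}$.

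Next, the Higgs field decomposes into components $\phi_i\colon L_{i-1}\otimes\I_{Z_{i-1}}\to L_i\otimes\omega_S\otimes\I_{Z_i}$. Since the target is torsion free, $\phi_i$ is determined by the induced homomorphism $L_{i-1}\to L_i\otimes\omega_S$, that is, by a section of $\O_S(\beta_i)$ with $\beta_i=c_1(L_i\otimes L_{i-1}^*\otimes\omega_S)$ as in Notation~\ref{Notation}. Stability forces $\phi_i$ to be non-zero for every $i$, for otherwise $\bigoplus_{j<i}E_j\otimes\t^{-j}$ would be a $\phi$-invariant subpair of slope strictly exceeding that of $(E,\phi)$. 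Hence each $\phi_i$ cuts out a divisor $C_i\in|\O_S(\beta_i)|$, and the extra requirement that $\phi_i$ actually lands in the twisted ideal $\I_{Z_i}(\beta_i)$ becomes $\I_{Z_{i-1}}(-C_i)\subset\I_{Z_i}$, which is precisely the incidence condition defining $S_\beta^{[n]}\subset\Hilb_\beta^n(S)$.

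Finally, I turn this around to show that the triple $(S_\beta^{[n]},E_\L^{[n]},\phi_\L^{[n]})$ constructed in the excerpt is, tautologically, a flat family of trace-free Higgs pairs of the above shape: the tautological sections inducing the $\phi_{\L,i}$ are non-zero fibrewise by construction, and $S_\beta^{[n]}$ is by definition the maximal subscheme on which $\phi_\L$ preserves the ideal-sheaf decomposition. The resulting classifying morphism $S_\beta^{[n]}\to\M_{1^r}$ is injective and set-theoretically surjects onto a connected component by the first two steps, and different triples $(L,n)$ give disjoint components because $(L,n)$ can be recovered from any Higgs pair in the component. The main technical hurdle I expect is Step~2: a careful check that stability genuinely forces all $\phi_i$ to be non-zero (so that $\beta_i$ is effective) and that the incidence condition on $\Hilb_\beta^n(S)$ matches on the nose the condition that $\phi_\L$ restricts to a Higgs field on $E_\L^{[n]}$. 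Everything else is essentially the bookkeeping already carried out in \cite{GSY,TT}.
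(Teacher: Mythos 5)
The paper offers no proof of this proposition --- it is quoted from \cite{GSY} and \cite{TT} --- and your sketch reconstructs essentially the argument of those references: weight decomposition, $E_i \cong L_i \otimes \I_{Z_i}$, identification of each $\phi_i$ with a section of $\O_S(\beta_i)$ via torsion-freeness, and the matching of the factorization condition $\I_{Z_{i-1}}(-C_i)\subset \I_{Z_i}$ with the incidence condition defining $S_\beta^{[n]}$. One small slip: if some $\phi_i=0$ then both $\bigoplus_{j<i}E_j\otimes\t^{-j}$ and $\bigoplus_{j\geq i}E_j\otimes\t^{-j}$ are $\phi$-invariant, so $(E,\phi)$ is decomposable and the summand of \emph{larger} reduced Hilbert polynomial (not necessarily the one you name, whose slope could be the smaller of the two) is the destabilizer; the conclusion that stability forces all $\phi_i\neq 0$ is unaffected. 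The genuinely nontrivial remaining point, which you correctly flag but do not carry out, is promoting the pointwise bijection to a scheme-theoretic isomorphism between $S_\beta^{[n]}$ with its maximal-subscheme structure and the corresponding component of the $\CC^*$-fixed locus --- this is precisely the content delegated to \cite{GSY} and \cite{TT}.
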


By Proposition \ref{PropMod}, the connected components of $\M_{1^r}$ are naturally indexed by tuples $L = (L_0,\ldots,L_s)$ and $n=(n_0,\ldots, n_s)$. 

\begin{definition}
We denote a connected component of $\M_{1^r}$ represented by a triple $(S_\beta^{[n]},E_\L^{[n]},\phi_\L^{[n]})$ by $\M_L^{[n]}$.
\end{definition}

Obviously, not every pair $(L,n)$ corresponds to a connected component of $\M_{1^r}$. The nested Hilbert scheme might be empty, or the Higgs pairs in the family $(E_\L^{[n]},\phi_\L^{[n]})$ might be unstable. The other restriction is the Chern data of the Higgs pairs. We will address stability in Section \ref{SecStab}. We finish this section with a lemma regarding the second issue.


\begin{lemma}\label{LemmaChern}
The total Chern class (in cohomology) of any fibre $E$ of $E_\L^{[n]}$ over $S_\beta^{[n]}$ is given by
\begin{align*}
c(E)
	& = 1 + c_1 + |n|\cdot pt + \sum_{0\leq i<j \leq s} c_1(L_i)c_1(L_j) \\
	& = 1 + (s+1)c_1(L_s) + \sum_{i=1}^s i \beta^i\\
	& \quad + |n|\cdot pt + \frac{s}{2 (s+1)}c_1^2 - \sum_{1\leq i<j\leq s} \frac{i(s+1-j)}{s+1}\beta^i\beta^j - \sum_{1\leq i\leq s} \frac{i(s+1-i)}{2(s+1)}(\beta^i)^2\,,
\end{align*}
where
\begin{align*}
c_1
	& = c_1(L_0)+\ldots+c_1(L_s)\,, \\
|n|
	& = n_0+\ldots+n_s\,,
\end{align*}
and $pt$ denotes the Poincar\'e dual of the homology class of a point.
\end{lemma}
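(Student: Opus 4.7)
The plan is to compute directly from the description of $E_\L^{[n]}$. First observe that a fibre of $E_\L^{[n]}$ over a point of $S_\beta^{[n]}$ is
\[
E \;=\; \bigoplus_{i=0}^{s} L_i\otimes I_i,
\]
where $I_i\subset\O_S$ is the ideal sheaf of a length-$n_i$ subscheme. Indeed, the twisting factors $\pr_j^*\O_{\beta_j}(1)$ in $\L_i$ restrict to trivial line bundles on a single fibre over $\Hilb_\beta(S)$, and the equivariant parameter $\t$ does not affect the Chern classes of $E$ as a sheaf on $S$.

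Next I would compute Chern characters. Using $\ch(I_i)=1-n_i\cdot pt$ and $\ch(L_i)=e^{\ell_i}$ (writing $\ell_i\coloneqq c_1(L_i)$), one has
\[
\ch(E)=\sum_{i=0}^s e^{\ell_i}\bigl(1-n_i\cdot pt\bigr),
\]
so that $\ch_1(E)=\sum_i\ell_i$ and $\ch_2(E)=\tfrac12\sum_i\ell_i^2-|n|\cdot pt$. Converting to Chern classes via $c_1=\ch_1$ and $c_2=\tfrac12(c_1^2-2\ch_2)$ gives immediately
\[
c_1(E)=\sum_{i=0}^s\ell_i=c_1,\qquad c_2(E)=\sum_{0\le i<j\le s}\ell_i\ell_j+|n|\cdot pt,
\]
which is the first form of the claimed formula.

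To obtain the second form, I would change variables from $(\ell_0,\ldots,\ell_s)$ to $(\ell_s,\beta^1,\ldots,\beta^s)$ using Notation~\ref{Notation}. The relation $\beta^i=\ell_{i-1}-\ell_i$ telescopes to $\ell_i=\ell_s+\sum_{k=i+1}^s\beta^k$, and summing over $i$ yields
\[
c_1=(s+1)\,\ell_s+\sum_{k=1}^{s}k\,\beta^k,\qquad\text{hence}\qquad \ell_s=\frac{c_1-\sum_k k\beta^k}{s+1}.
\]
Substituting this into $\ell_i=\ell_s+\sum_{k>i}\beta^k$ expresses every $\ell_i$ as an explicit $\QQ$-linear combination of $c_1$ and the $\beta^k$'s, and the degree-$1$ part of the stated formula is then an immediate consequence.

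The degree-$2$ part reduces, via $\sum_{i<j}\ell_i\ell_j=\tfrac12(c_1^2-\sum_i\ell_i^2)$, to verifying the combinatorial identity
\[
\sum_{i=0}^{s}\ell_i^{\,2}=\frac{1}{s+1}\,c_1^{\,2}+\frac{2}{s+1}\sum_{1\le i<j\le s}i(s+1-j)\,\beta^i\beta^j+\frac{1}{s+1}\sum_{i=1}^{s}i(s+1-i)\,(\beta^i)^2.
\]
This is the only non-routine part of the argument. My plan for it is to expand $\ell_i^{\,2}$ using $\ell_i=\ell_s+\sum_{k>i}\beta^k$, separate into three pieces ($\ell_s^{\,2}$-terms, cross terms $\ell_s\cdot\beta^k$, and pure $\beta$-terms), and count for each $\beta^i\beta^j$ (respectively $(\beta^i)^2$) how many indices $i'\le s$ satisfy $i'<i$ and $i'<j$. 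The coefficient of $\beta^i\beta^j$ (with $i<j$) comes out to $i$, that of $(\beta^i)^2$ to $i$, while $\ell_s^{\,2}$ appears with multiplicity $s+1$; substituting $\ell_s=(c_1-\sum_k k\beta^k)/(s+1)$ and collecting like terms then yields the asserted coefficients $i(s+1-j)/(s+1)$ and $i(s+1-i)/(s+1)$. The hard part is purely bookkeeping, and a small case check (e.g.\ $s=1$, where both sides equal $(c_1^2+(\beta^1)^2)/2$) provides a useful sanity check.
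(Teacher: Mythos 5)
Your proposal is correct and follows essentially the same route as the paper: a direct Chern-class computation for the first form, followed by substituting the linear expression of the $c_1(L_i)$ in terms of $c_1$ and the $\beta^k$ into the quadratic term and collecting coefficients. The only cosmetic difference is that you route the degree-two bookkeeping through $\sum_i c_1(L_i)^2$ (via $\ell_i=\ell_s+\sum_{k>i}\beta^k$) rather than directly through $\sum_{i<j}c_1(L_i)c_1(L_j)$ as the paper does, and the combinatorial identity you reduce to is indeed the correct one.
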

\begin{proof}
This is a straight forward computation. For the second equation, note that we have
\[(s+1)c_1(L_i) = \sum_{k=1}^i -k \beta^k + \sum_{k=i+1}^s (s+1-k) \beta^k + c_1\]
for $i = 0,\ldots,s$. Substituting this into
\[\sum_{0\leq i<j \leq s} c_1(L_i)c_1(L_j)\]
and interchanging sums gives the result.
\end{proof}

\section{Stability}\label{SecStab}
By Proposition \ref{PropMod}, the connected components of $\M_{1^r}$ are isomorphic to nested Hilbert schemes $S_\beta^{[n]}$, with
\[\beta = (\beta_1,\ldots,\beta_s) \quad \text{and} \quad n = (n_0,\ldots,n_s)\,\]
tuples of divisor classes on $S$ and integers respectively. The Hilbert scheme is empty if and only if one of the $\beta_i$'s is not effective, or $\beta_i=0$ and $n_{i-1}<n_i$ for some $i$. Obviously, the virtual class of the nested Hilbert scheme vanishes in this case.

We will give dual conditions on $\beta$ and $n$, which hold whenever the Higgs pairs parametrized by $S_\beta^{[n]}$ are Gieseker unstable, and which in turn imply the vanishing of the virtual class. We recall the definition of stability of Higgs pairs.

\begin{definition}
Let $H$ be a polarization of the surface $S$. A Higgs pair $(E,\phi)$ is called \emph{slope stable} (resp. \emph{slope semistable}) if
\[\frac{\deg(F)}{\rk(F)} < \frac{\deg(E)}{\rk(E)} \quad (\text{resp.} \quad \frac{\deg(F)}{\rk(F)} \leq \frac{\deg(E)}{\rk(E)})\]
for every $\phi$-invariant subsheaf $0\neq F\subsetneq E$ with $\rk(F)<\rk(E)$. It is called \emph{Gieseker stable} (resp. \emph{Gieseker semistable}) if we have inequalities of polynomials in $m$
\[\frac{\chi(F(mH))}{\rk(F)} < \frac{\chi(E(mH))}{\rk(E)} \quad (\text{resp.} \quad \frac{\chi(F(mH))}{\rk(F)} \leq \frac{\chi(E(mH))}{\rk(E)})\]
for every proper $\phi$-invariant subsheaf $0\neq F\subsetneq E$. By ``(semi)stable'', we will always mean Gieseker (semi)stable.
\end{definition}

Let $E = E_0 \oplus \ldots \oplus E_s$ be a sum of torsion free rank $1$ sheaves, equipped with a Higgs field
\[\phi=(\phi_1,\ldots,\phi_s) \colon E \rightarrow E\otimes \omega_S\]
given by homomorphisms
\[\phi_i\colon E_{i-1} \rightarrow E_i \otimes \omega_S \quad \text{for } i=1,\ldots,s\,.\]
Note that all Higgs pairs in $\M_{1^r}$ are of this form.
\begin{lemma}\label{LemSSs}
Assume that $(E,\phi)$ is indecomposable, i.e.\ $\phi_i \neq 0$ for $i=1,\ldots,s$ and assume that
\[\deg(E_{i-1}) \geq \deg(E_i) \quad \text{for} \quad i=1,\ldots,s\,.\]
Then the pair $(E,\phi)$ is slope semistable. It is slope stable unless
\[\deg(E_0) = \ldots = \deg(E_s)\,.\]
\end{lemma}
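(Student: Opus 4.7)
The plan is to exploit the fact that, generically on $S$, the Higgs field $\phi$ is a single nilpotent Jordan block, which forces every $\phi$-invariant subsheaf to have a very rigid shape. Concretely, I would take a nonzero $\phi$-invariant subsheaf $F\subset E$ of rank $k<s+1$ and first analyse it at the generic point $\eta$ of $S$. Since each $\phi_i\colon E_{i-1}\to E_i\otimes\omega_S$ is a nonzero morphism between torsion-free rank-one sheaves, it is generically an isomorphism, so $\phi_\eta$ acts as a single Jordan block on the $(s+1)$-dimensional space $E_\eta=\bigoplus_i E_{i,\eta}$. Its $\phi_\eta$-invariant subspaces are exactly $E_{\geq a,\eta}\coloneqq\bigoplus_{i\geq a}E_{i,\eta}$ for $a=0,\ldots,s+1$, so $F_\eta=E_{\geq a,\eta}$ for the unique $a$ with $s+1-a=k$.

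Next I would promote this to an honest degree bound via the filtration
\[F=F^{(0)}\supset F^{(1)}\supset\cdots\supset F^{(s+1)}=0,\qquad F^{(i)}\coloneqq F\cap E_{\geq i},\]
where $E_{\geq i}\coloneqq\bigoplus_{j\geq i}E_j\subset E$ is the evident subsheaf. Each graded piece $F^{(i)}/F^{(i+1)}$ embeds into $E_i$, and the generic analysis identifies it with $0$ for $i<a$ (otherwise it would be torsion in the torsion-free sheaf $E_i$) and with a rank-one subsheaf of $E_i$ for $i\geq a$. Since a rank-one subsheaf of a rank-one torsion-free sheaf has degree at most that of the ambient sheaf, this yields the key bound
\[\deg F\leq \sum_{i\geq a}\deg E_i.\]

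The slope inequality $\deg F/k\leq \deg E/(s+1)$ then reduces, after rearrangement, to
\[\frac{1}{s+1-a}\sum_{i\geq a}\deg E_i\leq \frac{1}{a}\sum_{i<a}\deg E_i,\]
i.e.\ the average of the last $s+1-a$ degrees is at most the average of the first $a$; this is immediate from the monotonicity hypothesis $\deg E_0\geq\cdots\geq\deg E_s$. Strict inequality fails exactly when $\deg E_0=\cdots=\deg E_s$, in which case $E_{\geq 1}$ is a proper $\phi$-invariant subsheaf of the same slope as $E$, so that $(E,\phi)$ is then only semistable.

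The only delicate step is the first one: verifying that a globally defined $\phi$-invariant subsheaf is pinned down at $\eta$ by the Jordan-block structure of $\phi_\eta$, and controlling its degree via the projections to the torsion-free pieces $E_i$. Once that rigidity is secured, the filtration and the averaging inequality are routine.
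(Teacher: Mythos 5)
Your argument is correct and follows essentially the same strategy as the paper: both show that a $\phi$-invariant subsheaf $F$ of rank $k$ is forced into the tail $E_{\geq s+1-k}$ with $\deg F \leq \sum_{i\geq s+1-k}\deg E_i$, and then conclude with the same averaging inequality (and the same observation about $E_1\oplus\cdots\oplus E_s$ in the equal-degree case). The only difference is technical: you pin down $F$ at the generic point via the Jordan-block structure of $\phi_\eta$ and use the intersection filtration $F\cap E_{\geq i}$, whereas the paper takes the maximal $j$ with $F\subset E_j\oplus\cdots\oplus E_s$ and uses the image filtration $K_S^{-i}\otimes\phi^{\circ i}(F)$ to compute the rank.
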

\begin{proof}
Let $F\subset E$ be a $\phi$-invariant Higgs field. Let $j$ be maximal, such that
\begin{equation}\label{EqRankUp}
F\subset E_j\oplus\ldots\oplus E_s\,.
\end{equation}
I claim that $F$ has rank $s+1-j$. It follows that if $F$ is a destabilizing subsheaf, so is $E_j\oplus\ldots\oplus E_s$.

In order to prove the claim, consider the filtration
\[F = F^0 \supset \ldots \supset F^{s-j} \supset F^{s+1-j} = 0\]
of $F$, given by
\[F^i = K_S^{-i}\otimes \phi^{\circ i} (F)\,,\]
so we have
\[F^i \subset E_{j+i}\oplus\ldots\oplus E_s\]
for $i=0,\ldots,s+1-j$. Note that for $i = 0,\ldots,s-j$, by injectivity of $\phi_{j+i}\cdots\phi_{j+1}$, and by the choice of $j$, the composition
\[F^i \subset E_{j+i}\oplus\ldots\oplus E_s \rightarrow E_{j+i}\]
is non-zero, and hence its image has rank $1$, since $E_{j+i}$ is torsion-free. On the other hand, its kernel contains $F^{i+1}$. It follows that we have
\[\rk F > \rk F^1 > \ldots > \rk F^{s+1-j} = 0\]
and hence, $\rk(F) = s+1-j$ by \eqref{EqRankUp}, proving the claim.

It follows that $(E,\phi)$ is slope semistable if and only if
\[\frac{\sum_{i=j}^s \deg(E_i)}{s+1-j} \leq \frac{\sum_{i=0}^s \deg(E_i)}{s+1} = \frac{\deg(E)}{\rk(E)}\]
for $j=0,\ldots,s$.
This clearly holds when $\deg(E_i) \leq \deg(E_{i-1})$ for all $i$. Finally note that $(E,\phi)$ is slope stable if one of the inequalities is strict.
\end{proof}

The hypothesis of Lemma \ref{LemSSs} certainly holds when $c_1(E_{i-1}) - c_1(E_i)$ is effective for each $i$. In this case, the condition
\[\deg(E_0) = \ldots = \deg(E_s)\]
implies that
\[c_1(E_0) = \ldots = c_1(E_s)\,.\]
Although such a Higgs pair is not slope stable, it might still be Gieseker (semi)stable.

\begin{lemma}\label{LemSs}
Assume that $(E,\phi)$ is indecomposable, and assume that
\begin{align*}
c_1(E_{0})= \ldots = c_1(E_s)
\quad \text{and} \quad
c_2(E_{0}) \leq \ldots \leq c_2(E_s)\,.
\end{align*}
Then the pair $(E,\phi)$ is Gieseker semistable. It is Gieseker stable unless
\[c_2(E_0) = \ldots = c_2(E_s)\,.\]
\end{lemma}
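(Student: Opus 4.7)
The approach is to upgrade Lemma \ref{LemSSs} from a slope comparison to a comparison of reduced Hilbert polynomials. The filtration argument in that proof already supplies the key geometric input: for any proper $\phi$-invariant subsheaf $F \subsetneq E$ with $\rk F < r$, if $j$ is maximal such that $F \subset E_j \oplus \ldots \oplus E_s$, then $\rk F = s+1-j$ (the argument there uses only indecomposability of $(E,\phi)$, not the degree hypothesis). Proper subsheaves of full rank cause no trouble: $E/F$ is then a nonzero torsion sheaf, so $\chi((E/F)(mH)) > 0$ for $m \gg 0$ and $F$ is not destabilizing. So I focus on the case $\rk F = s+1-j$.

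Under the standing assumption $H^1(\O_S)=0$ together with the equal-$c_1$ hypothesis, each $E_i$ is isomorphic to $L \otimes \I_{Z_i}$ for a single line bundle $L$ with $c_1(L)=c_1$ and a zero-dimensional $Z_i \subset S$ of length $c_2(E_i)$. I would filter $F$ by $F_k := F \cap (E_j \oplus \ldots \oplus E_{j+k-1})$ for $k=0,\ldots,s+1-j$. Then $F_k/F_{k-1}$ injects into $E_{j+k-1}$ via projection (since $F_{k-1}$ is exactly the kernel of $F_k \to E_{j+k-1}$), so each rank increment is at most $1$. Together with $\rk F_k \leq k$ and $\rk F = s+1-j$, this forces $\rk F_k = k$ throughout, and each subquotient $F_k/F_{k-1}$ is a rank-one torsion-free subsheaf of $E_{j+k-1}$. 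Writing $c_1(F_k/F_{k-1}) = c_1 - D_{k-1}$ with $D_{k-1}$ effective: if some $D_{k-1} \neq 0$ then $\mu(F) < \mu(E)$ and $F$ is not even slope-destabilizing; otherwise $F_k/F_{k-1} \cong L \otimes \I_{W_{k-1}}$ with $W_{k-1} \supset Z_{j+k-1}$, so $\mathrm{length}(W_{k-1}) \geq c_2(E_{j+k-1})$.

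Additivity of Euler characteristics along the filtration then yields
\[
\chi(F(mH)) = (s+1-j)\chi(L(mH)) - \sum_{k=1}^{s+1-j}\mathrm{length}(W_{k-1})
\]
and $\chi(E(mH)) = (s+1)\chi(L(mH)) - \sum_{i=0}^s c_2(E_i)$. The non-constant coefficients of $\chi(F(mH))/\rk F$ and $\chi(E(mH))/\rk E$ agree, so the Gieseker comparison reduces to the combinatorial inequality
\[
\frac{1}{s+1-j}\sum_{i=j}^s c_2(E_i) \;\geq\; \frac{1}{s+1}\sum_{i=0}^s c_2(E_i),
\]
which under $c_2(E_0) \leq \ldots \leq c_2(E_s)$ is the elementary fact that a terminal average dominates the overall average, strictly unless the sequence is constant. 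This gives Gieseker semistability, and strict Gieseker stability when not all $c_2(E_i)$ agree. The one step requiring some care is the choice of filtration: the intersection filtration $F_k = F \cap \bigoplus_{l<k} E_{j+l}$ is preferable to the operator filtration $K_S^{-k}\otimes\phi^{\circ k}(F)$ from Lemma \ref{LemSSs}, because it automatically yields torsion-free subquotients and hence a clean Chern-class bookkeeping.
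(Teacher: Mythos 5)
Your argument is correct and follows essentially the same route as the paper: reuse the rank computation from the proof of Lemma \ref{LemSSs} to reduce to $\phi$-invariant subsheaves of $E_j\oplus\cdots\oplus E_s$ of full rank $s+1-j$, and then compare reduced Hilbert polynomials using $\chi(E_{i-1}(mH))\geq\chi(E_i(mH))$, which is exactly the paper's one-line appeal to Grothendieck--Riemann--Roch. Your explicit intersection filtration $F\cap(E_j\oplus\cdots\oplus E_{j+k-1})$ and the identifications $E_i\cong L\otimes\I_{Z_i}$ (which use the standing hypothesis $H^1(\O_S)=0$, whereas the paper's Chern-class phrasing does not need it) only make the bookkeeping more explicit, and the combinatorial endpoint --- the terminal average of the $c_2(E_i)$ dominating the overall average --- is the same.
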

\begin{proof}
The proof is similar to the proof of Lemma \ref{LemSSs}. Simply note that by Grothendieck-Riemann-Roch the hypothesis implies
\[\chi(E_{i-1}(m)) \geq \chi(E_i(m))\]
for $i=1,\ldots,s$, with equality whenever $n_{i-1} = n_i$.
\end{proof}

Now let $S$ be a surface with $p_g(S)>0$ and $H^1(\O_S) = 0$. Let $L_0,\ldots,L_s$ be line bundles on $S$ and let $n = (n_0,\ldots,n_s)$ be non-negative integers. Let $\beta = \beta(L)$ (and $\beta_i$ and $\beta^i$ for $i=1,\ldots,s$) be given as in Notation~\ref{Notation}, and consider the flat family of Higgs pairs $(E_\L^{[n]},\phi_\L^{[n]})$ over the base $S_\beta^{[n]}$, as defined in Section \ref{SecMod}.

In terms of $\beta$ and $n$, Lemma \ref{LemSSs} and Lemma \ref{LemSs} tell us that whenever the family $(E_\L^{[n]},\phi_\L^{[n]})$ is not Gieseker semistable, there is an $i\in\{1,\ldots,s\}$ such that the divisor class $\beta^i$ is not effective, or such that $\beta^i = 0$ and $n_{i-1}>n_i$ (compare to the introduction of this section!). As we will see in the following proposition, this suffices to show that we have $i_*[S_\beta^{[n]}]^\vir= 0$ in this case (recall that we write $i\colon S_\beta^{[n]}\hookrightarrow \Hilb_\beta^n(S)$).


\begin{proposition}\label{PropGSs}
Assume that
\[i_*[S_\beta^{[n]}]^{\vir} \neq 0 \,.\]
Then the family $(E_\L^{[n]},\phi_\L)$ of Higgs pairs is (Gieseker) semistable for any polarization of $S$. It is stable unless $L_0=\ldots=L_s$ and $n_0=\ldots=n_s$.
\end{proposition}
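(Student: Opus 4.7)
The plan is to prove the contrapositive: whenever the family of Higgs pairs fails to be semistable, or is semistable-but-not-stable without $L_0\cong\dots\cong L_s$ and $n_0=\dots=n_s$, the virtual class $i_*[S_\beta^{[n]}]^{\vir}$ must vanish. The two-factor structure of Theorem \ref{TheComp}—a Seiberg--Witten factor together with a Carlsson--Okounkov-type Euler class—naturally dictates the split: the SW factors handle failure of effectivity of the $\beta^i$, while the Euler-class factor handles the remaining borderline case.

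Before invoking Lemmas \ref{LemSSs} and \ref{LemSs}, I would first verify the indecomposability hypothesis that each $\phi_i\neq 0$ on every fiber. The Higgs field $\phi_{\L,i}$ is built from the tautological section of $\O_{\beta_i}(1)\boxtimes\O_S(\beta_i)$, whose restriction to a fiber $\{C_i\}\times S$ is the defining section of $C_i$ and hence nonzero; composing with the nested inclusion $\I^{[n_{i-1}]}(-\mathcal{D}_i)\hookrightarrow\I^{[n_i]}$ preserves nonvanishing. Hence the indecomposability assumption of Lemmas \ref{LemSSs}--\ref{LemSs} holds at every point of $S_\beta^{[n]}$.

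Assuming $i_*[S_\beta^{[n]}]^{\vir}\neq 0$, Theorem \ref{TheComp} forces $\SW(\beta_i)\neq 0$ for each $i$. The classical property that a Seiberg--Witten basic class on a surface with $p_g>0$ satisfies both $\beta_i$ and $K_S-\beta_i=\beta^i$ effective (see \cite[\S6.3.1]{Mo}) then implies that every $\beta^i=c_1(L_{i-1})-c_1(L_i)$ is effective. Thus $\deg_H(L_{i-1})\geq\deg_H(L_i)$ for every polarization $H$, and Lemma \ref{LemSSs} yields slope semistability for \emph{any} polarization. Moreover, if some $\beta^i$ is a nontrivial effective class then $H\cdot\beta^i>0$ strictly, upgrading this to slope stability, hence Gieseker stability.

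This reduces everything to the case $\beta^i=0$ for every $i$. Then all $c_1(L_i)$ coincide and $H^1(\O_S)=0$ forces $L_0\cong\dots\cong L_s$, so Lemma \ref{LemSs} gives Gieseker stability as soon as $n_0\leq\dots\leq n_s$ with strict inequality somewhere. To close the argument I would show that the remaining possibility, $\beta^i=0$ combined with $n_{i-1}>n_i$, already implies vanishing of the Euler-class factor
\[e\bigl(R\pi_*\O_S(K_S)-\RHom_\pi(\I^{[n_{i-1}]},\I^{[n_i]}(K_S))\bigr)\]
from Theorem \ref{TheComp}: Serre duality interchanges the roles of $n_{i-1}$ and $n_i$ in this K-theory class, so its top Chern class is expected to drop outside the diagonal range $n_{i-1}\leq n_i$. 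I expect this to be the principal obstacle, because the generic Carlsson--Okounkov rank bound of Remark \ref{RemOkVan} is saturated by the virtual rank $n_{i-1}+n_i$; a sharper symmetric or duality-based argument is needed to extract the extra vanishing in this asymmetric range.
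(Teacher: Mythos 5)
Your overall strategy coincides with the paper's: use Theorem \ref{TheComp} to force $\SW(\beta_i)\neq 0$, deduce effectivity of the $\beta^i$, feed this into Lemmas \ref{LemSSs} and \ref{LemSs}, and reduce to the borderline case $L_0=\dots=L_s$ with $n_{i-1}>n_i$ for some $i$. The preliminary check that each $\phi_i\neq 0$ fibrewise is a reasonable addition, and the reduction steps are sound. However, the final step is a genuine gap, and you flag it yourself: you assert that the Euler-class factor
\[e\bigl(R\pi_*\omega_S-\RHom_\pi(\I^{[n_{i-1}]},\I^{[n_i]}\otimes\omega_S)\bigr)\]
``is expected to drop'' when $n_{i-1}>n_i$, and correctly observe that the Carlsson--Okounkov vanishing of Remark \ref{RemOkVan} cannot supply this, since one is taking the Chern class exactly in degree equal to the virtual rank $n_{i-1}+n_i$. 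Without this vanishing the proposition is not proved, because Lemma \ref{LemSs} only applies when the $n_i$ are weakly increasing.

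The missing idea is to apply Theorem \ref{TheComp} a \emph{second} time, in reverse. Relative Serre duality gives
\[\RHom_\pi\bigl(\I^{[n_{i-1}]},\I^{[n_i]}\otimes\omega_S\bigr)\cong\RHom_\pi\bigl(\I^{[n_i]},\I^{[n_{i-1}]}\bigr)^\vee[-2]\,,\]
so the factor above equals $(-1)^{n_{i-1}+n_i}\,e\bigl(R\pi_*\O_S-\RHom_\pi(\I^{[n_i]},\I^{[n_{i-1}]})\bigr)$, the Euler class of the dual of a class of rank $n_{i-1}+n_i$. Now Theorem \ref{TheComp}, applied to the two-step nested Hilbert scheme with $n=(n_i,n_{i-1})$ and $\beta=0$ (for which $\SW(0)=1$), identifies this Euler class, capped with $[S^{[n_i]}\times S^{[n_{i-1}]}]$, with $i_*[S^{[n_i,n_{i-1}]}]^{\vir}$. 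Since the nesting convention forces $S^{[n_i,n_{i-1}]}=\emptyset$ precisely when $n_{i-1}>n_i$, this virtual class vanishes, hence so does the factor, hence so does $i_*[S_\beta^{[n]}]^{\vir}$ — the contradiction you need. So the ``sharper duality-based argument'' you ask for is not a refinement of the rank bound at all, but a bootstrap of the very formula you are already using.
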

\begin{proof}
By Theorem \ref{TheComp}, equation \eqref{EqSWKS} and the hypothesis,  we have
\begin{align*}
\SW(\beta^1)\cdots\SW(\beta^s)
	& =(-1)^{s\cdot \chi(\O_S)} \SW(\beta_1)\cdots\SW(\beta_s) \\
	& \neq 0\,.
\end{align*}
It follows that $\beta^i \geq 0$ for $i=0,\ldots,s$, by definition of the Seiberg-Witten class.
By Lemma \ref{LemSSs}, the fibres of $(E_\L^{[n]},\phi_\L)$ are slope-stable, and hence Gieseker stable, unless $L_0=\ldots=L_s$. Assume the latter. By Lemma \ref{LemSs}, we need to show that $n_{i-1}\leq n_i$ for all $i$. Assume that $n_{i-1}>n_i$ for some $i$. Then the nested Hilbert scheme
\[i\colon S^{[n_i,n_{i-1}]}\hookrightarrow S^{[n_i]} \times S^{[n_{i-1}]}\]
is empty, and we have by Serre duality and Theorem \ref{TheComp} 
\begin{align} \label{EqFactor}
& e\Big(R\pi_*\omega_S - \RHom_\pi(\I^{[n_{i-1}]},\I^{[n_i]}\otimes {\omega_S})\Big) \\
& = (-1)^{n_{i-1} + n_i}  e\left(R\pi_*(\O_S) - \RHom_\pi(\I^{[n_i]},\I^{[n_{i-1}]})\right)\notag\\
& = (-1)^{n_{i-1} + n_i}  i_*[S^{[n_i,n_{i-1}]}]^\vir \notag\\
& = 0 \,.\notag
\end{align}
By the assumption $L_0=\ldots=L_s$, we have in particular $\beta_i=K_S$. It follows that the expression of Theorem~\ref{TheComp} has a factor \eqref{EqFactor}. We find $i_*[S_\beta^{[n]}]^\vir = 0$, which contradicts the hypothesis.
\end{proof}


Recall from Proposition \ref{PropMod}, that $\M_{1^r,c_1,c_2}$ is a union of Hilbert schemes $S_\beta^{[n]}$. We will also write $i$ for the morphism
\[i\colon \M_{1^r,c_1,c_2} \rightarrow \bigsqcup_{\beta,n} \Hilb_\beta^n(S)\,,\]
which is given on each connected component of $\M_{1^r,c_1,c_2}$ by the inclusion
\[i\colon S_\beta^{[n]} \rightarrow \Hilb_\beta^n(S)\,.\]
By the vanishing of Proposition \ref{PropGSs}, we can sum in the following proposition over all pairs $(L,n)$ or $(\beta,n)$, rather than the ones that correspond to connected components of  \emph{stable} Higgs pairs. In particular, the push-forward by $i$ of the virtual class does not depend on the polarization of the surface $S$.

\begin{proposition}\label{PropVirClass}
Let $S$ be a surface with $p_g(S)>0$ and $H^1(\O_S) = 0$. Fix $r$, $c_1$ and $c_2$ such that Gieseker semistability of Higgs pairs implies Gieseker stability. Then we have
\begin{equation}\label{EqVirCl}
\begin{split}
i_*[\M_{1^r,c_1,c_2}]^{\vir}
	& = \sum_{L,n} i_*[S_{\beta(L)}^{[n]}]^{\vir} \\
	& = \sum_{\beta,n} i_*[S_{\beta}^{[n]}]^{\vir}  \cdot  \#\ker \left(H^2(S,\ZZ) \xrightarrow{\cdot (s+1)} H^2(S,\ZZ)\right)
\end{split}
\end{equation}
where the sums are taken over
\begin{align*}
L,n \quad \text{with} \quad
\begin{split}
\quad c_1
	&= \sum_{i=1}^s c_1(L_i)\\
\quad c_2
	& = |n| + \sum_{0\leq i<j \leq s} c_1(L_i)c_1(L_j)\,;
\end{split}	\\
\intertext{and, respectively,}
\beta,n \quad \text{with}\quad
\begin{split}
\quad c_1
	& \equiv \sum_{i=1}^s i \beta^i \mod (s+1)H^2(S,\ZZ)\\
\quad c_2
	& = |n| + \frac{s}{2(s +1)}c_1^2 - \sum_{1\leq i<j\leq s} \frac{i(s+1-j)}{s+1}\beta^i\beta^j
\end{split}
\\
	& \quad - \sum_{1\leq i\leq s} \frac{i(s+1-i)}{2(s + 1)}(\beta^i)^2 \,.
\end{align*}
\end{proposition}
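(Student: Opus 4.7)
The plan is to establish the two equalities separately. For the first equality, I would use Proposition~\ref{PropMod}, which says that the connected components of $\M_{1^r}$ are exactly the triples $(S_{\beta(L)}^{[n]}, E_\L^{[n]}, \phi_\L^{[n]})$ indexed by pairs $(L,n)$ corresponding to stable Higgs pairs. The Chern classes of the universal Higgs pair $E_\L^{[n]}$ are computed in Lemma~\ref{LemmaChern}, which gives the conditions $c_1 = \sum_i c_1(L_i)$ and $c_2 = |n| + \sum_{i<j} c_1(L_i)c_1(L_j)$ listed in the first sum. As remarked after Theorem~\ref{TheVir}, the $\CC^*$-localized virtual class of \cite{TT} on $\M_{1^r,c_1,c_2}$ agrees component-wise with the virtual class from Theorem~\ref{TheVir} on the nested Hilbert scheme, so after pushing forward by $i$ the virtual class of $\M_{1^r,c_1,c_2}$ is the sum of $i_*[S_{\beta(L)}^{[n]}]^{\vir}$ over the stable pairs $(L,n)$. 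Proposition~\ref{PropGSs} shows that unstable pairs contribute zero, so the sum may be extended to all pairs $(L,n)$ satisfying the Chern class constraints.

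For the second equality, the key observation is that by Theorem~\ref{TheComp} the push-forward $i_*[S_\beta^{[n]}]^{\vir}$ depends only on $\beta$ and $n$ (the Seiberg--Witten classes and the Carlsson--Okounkov-type Chern classes depend only on the $\beta_i$), not on the lift to a tuple $L$. Thus for fixed $(\beta,n)$ it suffices to count the tuples $L = (L_0,\dots,L_s)$ with $\beta(L) = \beta$ and $\sum_i c_1(L_i) = c_1$. Since $H^1(\O_S) = 0$, line bundles on $S$ are determined by their first Chern class, so by the relations
\[c_1(L_i) - c_1(L_{i-1}) = \beta_i - K_S = -\beta^i\]
the entire tuple $L$ is determined by $c_1(L_s)$. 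The constraint $c_1 = \sum_i c_1(L_i) = (s+1)c_1(L_s) + \sum_i i\beta^i$ from Lemma~\ref{LemmaChern} then reduces to solving
\[(s+1)\, c_1(L_s) \;=\; c_1 - \sum_{i=1}^s i\beta^i\]
in $H^2(S,\ZZ)$, which has a solution exactly when $c_1 \equiv \sum_i i \beta^i \mod (s+1) H^2(S,\ZZ)$, and in that case the solutions form a coset of $\ker(\cdot(s+1)) = H^2(S,\ZZ)[r]$. Hence the number of $L$ mapping to a given $\beta$ (with the prescribed $c_1$) is exactly $\#H^2(S,\ZZ)[r]$, giving the multiplicative factor in the second sum.

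It remains to check that the $c_2$-condition in the first sum matches the $c_2$-condition in the second sum under the substitution $(L,n) \mapsto (\beta(L),n)$. This is a straightforward translation: substituting $c_1(L_i) = c_1(L_s) + \sum_{k=i+1}^s \beta^k$ into $\sum_{i<j} c_1(L_i) c_1(L_j)$ and collecting terms, one arrives at
\[\tfrac{s}{2(s+1)}c_1^2 - \sum_{1\leq i<j\leq s} \tfrac{i(s+1-j)}{s+1}\beta^i\beta^j - \sum_{1\leq i\leq s} \tfrac{i(s+1-i)}{2(s+1)}(\beta^i)^2,\]
which is precisely the content of the second equation in Lemma~\ref{LemmaChern}. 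Combining these steps yields the two expressions for $i_*[\M_{1^r,c_1,c_2}]^{\vir}$.

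The routine but slightly delicate step is the combinatorial bookkeeping in the last paragraph, but this is already encoded in Lemma~\ref{LemmaChern}. The conceptual heart of the proof is rather the vanishing of Proposition~\ref{PropGSs}: without it, one could not replace sums over \emph{stable} pairs by sums over all pairs with the prescribed Chern data, and in particular one could not conclude that the push-forward of the virtual class is independent of the polarization of $S$.
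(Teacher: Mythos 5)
Your proposal is correct and follows essentially the same route as the paper: Lemma~\ref{LemmaChern} for the Chern-class bookkeeping, the vanishing of Proposition~\ref{PropGSs} to pass from stable pairs to all pairs with the prescribed Chern data, and the observation that the lifts $L$ of a fixed $\beta$ with $\sum_i c_1(L_i)=c_1$ form a torsor under $\ker(\cdot(s+1))$ (the paper phrases this as a $K$-torsor of isomorphic connected components of $\M_{1^r,c_1,c_2}$, which is the same count). No gaps.
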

\begin{proof}
The $c_2$-conditions on the pairs $(L,n)$ and $(\beta,n)$ appearing in the sums are given by Lemma~\ref{LemmaChern}.
Moreover, it is easy to see that for an $s$-tuple of curve classes $\beta = (\beta_1,\ldots,\beta_s) \in \left(H^2(S,\ZZ)\right)^s$, there is a tuple of vector bundles $L = (L_0,\ldots, L_s)$ with
\[c_1 = \sum_{i=1}^s c_1(L_i) \quad \text{and} \quad \beta = \beta(L)\]
if and only if
\[\quad c_1 \equiv \sum_{i=1}^s i \beta^i \mod (s+1)H^2(S,\ZZ)\,.\]
Now assume that $S_\beta^{[n]}\cong \M_L^{[n]} \subset \M_{1^r,c_1,c_2}$ is a connected component. For a curve class
\[\gamma \in \ker \left(H^2(S,\ZZ) \xrightarrow{\cdot (s+1)} H^2(S,\ZZ)\right) \eqqcolon K\]
there is a connected component
\[S_\beta^{[n]}\cong \M_{L(\gamma)}^{[n]}\subset \M_{1^r,c_1,c_2}\,,\]
where
\[L(\gamma) = (L_0\otimes \O_S(\gamma), \ldots, L_s\otimes \O_S(\gamma))\,.\]
In fact, there is a $K$-torsor of connected components of $\M_{1^r,c_1,c_2}$ that are isomorphic to $S_\beta^{[n]}$. This expains the second equation of \eqref{EqVirCl}.

Finally, note that pairs $(\beta,n)$ for which the scheme $S_\beta^{[n]}$ is empty obviously do not contribute to the right-hand side of \eqref{EqVirCl}. By Proposition \ref{PropGSs}, the same holds for pairs $(\beta,n)$ for which $S_\beta^{[n]}$ parametrizes unstable sheaves.
\end{proof}
%


\section{Tautological integrals}\label{SecVWInt}
Choose line bundles $L = (L_0,\ldots, L_s)$ on $S$ and let $\beta = \beta(L) = (\beta_1,\ldots,\beta_s)$ and $\L = (\L_0,\ldots,\L_s)$ be defined as in Section \ref{SecMod}. Let $n=(n_0,\ldots,n_s)$ be non-negative integers. Recall that we write
\[E_\L^{[n]} = \L_0 \otimes \I^{[n_0]} \otimes \t^0 \oplus \ldots \oplus \L_s \otimes \I^{[n_s]} \otimes \t^{-s}\]
for the sheaf on
\[\Hilb_\beta^{n}(S) \times S = S^{[n_0]} \times \cdots\times S^{[n_s]} \times |\O_S(\beta_1)|\times\cdots\times |\O_S(\beta_s)| \times S\,\]
and for its restriction to the nested Hilbert scheme
\[i:S_\beta^{[n]} \hookrightarrow \Hilb_\beta^{n}(S)\,,\]
over which we have a canonically defined Higgs field
$\phi_\L \colon E_\L^{[n]}\rightarrow E_\L^{[n]} \otimes \omega_S\otimes \t$.

%

Define a class
\begin{align*}
T_\L^{[n]}
	\coloneqq {} & \RHom_\pi(E_\L^{[n]},E_\L^{[n]}\otimes\omega_S\otimes\t)_0 - \RHom_\pi(E_\L^{[n]},E_\L^{[n]})_0 \\
	\in {} & K_0^{\CC^*}(\Hilb_\beta^{n}(S))\,,
\end{align*}
and denote its pull-back to $S_\beta^{[n]}$ by the same symbol. Note that $T_\L^{[n]}$ depends only on $\beta$, rather than on $L$ (or on $\L$). We will write
\[N_\L^{[n]} \coloneqq T_\L^{[n]} - \left(T_\L^{[n]}\right)^{\CC^*}\]
for its moving part.
Let $e$ denote the $\CC^*$-equivariant Euler class, and define the rational number
\begin{equation}\label{EqContVW}
\VW_\beta^{[n]} \coloneqq \int_{[S_\beta^{[n]}]^{\vir}} \frac{1}{e\left(N_\L^{[n]}\right)}\,. 
\end{equation}
In the case that $(E_\L^{[n]},\phi_\L)$ represents a connected component
\[\M_L^{[n]} = (S_\beta^{[n]},E_\L^{[n]},\phi_\L^{[n]})\subset \M_{1^r,c_1,c_2}\,,\]
$T_\L^{[n]}$ is the class in $K$-theory of the cone \eqref{EqCone} in the introduction, and hence equals the $\CC^*$-localized perfect obstruction theory of \cite{TT} on $\M_L^{[n]}$. Over $\M_L^{[n]}$, the class $N_\L^{[n]}$ is the virtual normal bundle to the $\CC^*$ fixed locus $ (\N_{r,M,c_2}^\bot )^{\CC^*}$ in $\N_{r,M,c_2}^\bot$. By definition of the Vafa-Witten invariant \eqref{EqDefVW}, the contribution of the connected component $\M_L^{[n]}$ is given by $\VW_\beta^{[n]}$.

If the Higgs pair $(E_\L^{[n]},\phi_\L)$ contains fibres that are not Gieseker semistable, it does not represent a connected component of any $\M_{1^r,c_1,c_2}$, and hence does not contribute to the Vafa-Witten invariant. On the other hand, by Proposition \ref{PropGSs}, we have $\VW_\beta^{[n]}= 0$ in this case. It follows that, using the notation from Proposition \ref{PropVirClass}, we have:
\begin{align*}
\VW_{1^r,c_1,c_2}
	& = \sum_{L,n} \VW_{\beta(L)}^{[n]} \\
	& = \sum_{\beta,n} \VW_{\beta}^{[n]} \cdot  \#\ker \left(H^2(S,\ZZ) \xrightarrow{\cdot (s+1)} H^2(S,\ZZ)\right) \,.
\end{align*}

Now define a line bundle
\[K_\L^{[n]}  \coloneqq \det \left(T_\L^{[n]\vee}\right)\]
on $\Hilb_\beta^{[n]}(S)$. Note that $T^{[n]}_{\beta}$ is defined as the difference between a complex and its dual, up to a factor $\t$. Hence its determinant is by construction a square, up to a factor $\t$. Hence, after choosing once and for all a square root of $\t$, the line bundle $K^{[n]}_\beta$ has a canonical square root, denoted by $(K_\L^{[n]})^\frac{1}{2}$. Over $S_\beta^{[n]}$, the bundle $K_\L^{[n]}$ restricts to the virtual canonical bundle \cite{T}, and its square root restricts to the canonical square root of \cite[Proposition 2.6]{T}.

By \cite{T}, the contribution to the refined invariant can be computed by
\begin{equation}\label{EqContVWref}
\VW_\beta^{[n]}(y) \coloneqq\left[\int_{[S_\beta^{[n]}]^{\vir}} \frac{\ch\left((K_\L^{[n]})^\frac{1}{2}\right)}{\ch\left(\Lambda^\bullet(N_\L^{[n]\vee})\right)}\Td\left((T_\L^{[n]})^{\CC^*}\right)\right]_{\ch(\t) = y}\,,
\end{equation}
where $\ch$ and $\Td$ denote the $\CC^*$-equivariant Chern character and Todd class respectively. Again, in the language of Proposition \ref{PropVirClass}, we have
\begin{equation}\label{EqSumContRef}
\begin{split}
\VW_{1^r,c_1,c_2}(y)
	& = \sum_{L,n} \VW_{\beta(L)}^{[n]}(y) \\
	& = \sum_{\beta,n} \VW_{\beta}^{[n]}(y) \cdot \#\ker \left(H^2(S,\ZZ) \xrightarrow{\cdot (s+1)} H^2(S,\ZZ)\right) \,.
\end{split}
\end{equation}


By Theorem \ref{TheComp}, we have
\begin{multline*}
i_*[S_\beta^{[n]}]^{\vir} = \prod_{i=1}^{s} e\left(R\pi_*(\O_S(\beta_i) - \RHom_\pi\left(\I^{[n_{i-1}]},\I^{[n_i]}(\beta_i)\right)\right) \\
\cap \left[S^{[n_0]} \times \cdots \times S^{[n_{s}]}\right] \times \SW(\beta_1)\times\cdots\times\SW(\beta_s) \,.
\end{multline*}
The factor
\begin{align*}
\SW(\beta)
	\coloneqq {} & \SW(\beta_1)\times\cdots\times\SW(\beta_s) \\
	\in {} & A_0\big(|\O_S(\beta_1)|\times\cdots\times|\O_S(\beta_s)|\big)
\end{align*}
annihilates all Chern classes in the integrants of \eqref{EqContVW} and \eqref{EqContVWref} that are pulled back from
\[|\O_S(\beta_1)|\times\cdots\times|\O_S(\beta_s)|\,.\]
It follows that we can rewrite \eqref{EqContVW} and \eqref{EqContVWref} as integrals over
\[\Hilb^n(S) = S^{[n_0]}\times \cdots\times S^{[n_s]}\,.\]

Define the sheaf
\[E_L^{[n]} \coloneqq L_0 \otimes \I^{[n_0]} \otimes \t^0 \oplus \ldots,\oplus L_s \otimes \I^{[n_s]} \otimes \t^{-s} \quad \text{on}\quad \Hilb^n(S)\times S\]
and classes
\begin{align*}
T_L^{[n]} &\coloneqq \RHom_\pi(E_L^{[n]},E_L^{[n]}\otimes\omega_S\otimes\t)_0 - \RHom_\pi(E_L^{[n]},E_L^{[n]})_0\,,\\
N_L^{[n]} &\coloneqq T_L^{[n]} - \left(T_L^{[n]}\right)^{\CC^*}\,, \quad \text{and}\\
K_L^{[n]} &\coloneqq \det \left(T_L^{[n]\vee}\right)
\end{align*}
in $K_0(\Hilb^{n}(S))$. Again, note that since $H^1(\O_S) = 0$, the classes $T_L^{[n]}$, $N_L^{[n]}$, and $K_L^{[n]}$ depend on $\beta = \beta(L)$, rather than on $L$. We have, now considering $\SW(\beta)$ as an integer,
\begin{multline}\label{EqVW}
VW_\beta^{[n]}=\SW(\beta) \int_{[\Hilb^{n}(S)]} \frac{1}{e\left(N_L^{[n]}\right)} \\
\times \prod_{i=1}^s e\left(R\pi_*\O(\beta_i) - \RHom_\pi(\I^{[n_{i-1}]},\I^{[n_i]}(\beta_i))\right)
\end{multline}
and
\begin{multline}\label{EqVWref}
VW_\beta^{[n]}(y)=\SW(\beta) \Bigg[\int_{[\Hilb^{n}(S)]} \frac{\ch\left((K_L^{[n]})^\frac{1}{2}\right)}{\ch(\Lambda^\bullet(N_L^{[n]\vee}))}\Td\left((T_L^{[n]})^{\CC^*}\right)\\
\times  \prod_{i=1}^s e\left(R\pi_*\O(\beta_i) - \RHom_\pi(\I^{[n_{i-1}]},\I^{[n_i]}(\beta_i))\right)\Bigg]_{\ch(\t) = y}\,.
\end{multline}

\section{Removing trace} \label{SecTrace}
We can normalize the generating series
\[
\sum_n \VW_\beta^{[n]} \, q^{n}
\]
by dividing through the leading term. In terms of the integrals of \eqref{EqVW}, this comes down to considering `traceless' integrants. By this we mean the following. Note that $N_L$ can be written as a linear combination of terms of the form
\[\RHom_\pi(\mathcal{E}, \mathcal{F})\]
with $\mathcal{E}$ and $\mathcal{F}$ torsion free rank $1$ sheaves. We will replace each such term by
\[\RHom_\pi(\mathcal{E},\mathcal{F}) - \RHom_\pi(\det\mathcal{E},\det\mathcal{F})\,.\]
In Section \ref{SecFront}, we will deal with the leading term of the generating series separately.

We keep the notation from the previous section. Moreover, we will write
\begin{align*}
E_L &\coloneqq E_L^{[0]} = L_0\otimes \t^0 \oplus \ldots \oplus L_s \otimes \t^{-s}
\end{align*}
for the vector bundle on $S$, and furthermore
\begin{align*}
T_L &\coloneqq T_L^{[0]}=  R\Hom(E_L,E_L\otimes \omega_S\otimes \t) _0- R\Hom( E_L,E_L)_0\,,\\
N_L & \coloneqq N_L^{[0]}=T_L - (T_L)^{\CC^*}\,,\\
K_L & \coloneqq K_L^{[0]}= \det T_L^\vee\,.
\end{align*}
for the classes in the equivariant $K$-group of a point.
Finally, we will also use the notation
\[T_{L,0}^{[n]} = T_L^{[n]} - T_L\,, \quad N_{L,0}^{[n]} = N_L^{[n]} - N_L\,, \quad \text{and} \quad K_{L,0}^{[n]} = K_L^{[n]} \otimes K_L^*\,,\]
for the classes in $K_0(\Hilb^n(S))$, where we suppress pull-backs from the point. Define
\begin{align*}
F_n(S,\beta)
	& \coloneqq \frac{1}{e(N_L)} \quad \text{and}\\
Q_n(S,\beta)
	& \coloneqq \int_{[\Hilb^{n}(S)]} \frac{1}{e\left(N_{L,0}^{[n]}\right)} \prod_{i=1}^s e\left(R\pi_*\O(\beta_i) - \RHom_\pi(\I^{[n_{i-1}]},\I^{[n_i]}(\beta_i))\right)\,,
\end{align*}
so we have
\[VW_\beta^{[n]} = \SW(\beta) F_n(S,\beta) Q_n(S,\beta)\,.\]
In the refined case, define
\begin{align*}
F(S,\beta,y)
	\coloneqq{}& \left[\frac{\ch\left(K_L^{\frac{1}{2}}\right)}{\ch(\Lambda^\bullet(N_L^\vee))}\Td\left(T_L^{\CC^*}\right)\right]_{\ch(\t) = y} \quad \text{and} \\
Q_n(S,\beta,y)
	\coloneqq{}& \Bigg[\int_{[\Hilb^{n}(S)]} \frac{\ch\left((K_{L,0}^{[n]})^{\frac{1}{2}}\right)}{\ch(\Lambda^\bullet((N_{L,0}^{[n]})^\vee))}\Td\left((T_{L,0}^{[n]})^{\CC^*}\right)\\
	& \times  \prod_{i=1}^s e\left(R\pi_*\O(\beta_i) - \RHom_\pi(\I^{[n_{i-1}]},\I^{[n_i]}(\beta_i))\right)\Bigg]_{\ch(\t) = y}\,,
\end{align*}
so that
\begin{equation}\label{EqVWContFac}
\VW_\beta^{[n]}(y) = \SW(\beta) \, F(S,\beta,y) \, Q_n(S,\beta,y)\,.
\end{equation}

\begin{remark}\label{RemExpY}
A priori, $Q_n(S,\beta,y)$ is a rational function in $\sqrt y$, due to the fractional exponent of the virtual canonical bundle. However, an easy computation shows that the equivariant parameter $\t$ appears in $K^{[n]}_{\beta,0}$, with \emph{even} exponent, and hence, $Q_n(S,\beta,y)$ is in fact a rational function in $y$.
\end{remark}

In the Section \ref{SecFront} we will compute $F(S,\beta)$ under the assumption $\SW(\beta)\neq 0 $. In Section \ref{SecUniv} we will show that the numbers $Q_n(S,\beta)$ are given by universal polynomials $P_n(S,\beta)$ in the Chern numbers of $S$ and $\beta=(\beta_1,\ldots,\beta_s)$. We will deal with the refined version at the same time.

\section{The leading term}\label{SecFront}
In this section we compute the factor $F(S,\beta,y)$. Let $L=(L_0,\ldots,L_s)$ be an $(s+1)$-tuple of line bundles on $S$, and let $\beta=\beta(L)$ be given as in Notation~\ref{Notation}.
Also recall that we write
\[E_L = L_0 \otimes \t^0 \oplus \ldots \oplus L_s \otimes \t^{-s}\,.\]
Assume that
\[\SW(\beta) = \SW(\beta_1)\cdots\SW(\beta_s) \neq 0 \,.\]
Then, by \cite[Proposition 6.29]{Mo}, we have $(\beta^i)^2 = (\beta^i K_S)$, or equivalently $\chi(\beta^i) = \chi(\O_S)$ for $i=1,\ldots,s$. Using Serre duality, we can write
\begin{align*}
T_L	= {}& R\Hom(E_L,E_L\otimes \omega_s\otimes \t) _0- R\Hom( E_L,E_L)_0\\
	= {}& \sum_{i=0}^s (\t^{i+1} - \t^{-i}) \left(\sum_{j=0}^{s-i}\chi(L_{i+j}^* \otimes L_j \otimes \omega_S)
			- \sum_{j=1}^{s-i}\chi(L_{i+j}^*\otimes L_{j-1})\right) \\
	& - (\t - 1)\cdot\chi(\O_s)\\
	= {}& \sum_{i=1}^s (\t^{i+1} - \t^{-i}) \left(\sum_{j=0}^{s-i}\chi(\beta^{1+j}+\ldots + \beta^{i+j}+K_S)
			- \sum_{j=1}^{s-i}\chi(\beta^{j}+\ldots+\beta^{i+j})\right)\,,\\
\end{align*}
where the second sum starts with $i=1$, since the coefficient of $(\t - 1)$ equals
\[\left(\sum_{j=0}^{s}\chi(K_S) - \sum_{j=1}^{s}\chi(\beta^{j})\right) - \chi(\O_S) = 0\]
by the assumption $\SW(\beta) \neq 0$. Note that in particular, we have
\begin{equation}\label{EqNoFix}
T_L = N_L\,.
\end{equation}
Moreover, note that
\begin{align*}
\chi(\beta^{1+j}+\ldots + \beta^{i+j}+K_S)
	={}&\frac{(\beta^{1+j}+\ldots + \beta^{i+j}+K_S)\cdot(\beta^{1+j}+\ldots + \beta^{i+j})}{2}\\
	&+ \chi(\O_S)\\
	={}&\sum_{j<k\leq l\leq i+j} \beta^k\beta^l + \chi(\O_S)\,.
\end{align*}
and similarly
\begin{align*}
\chi(\beta^{j}+\ldots + \beta^{i+j})
	&=\frac{(\beta^{j}+\ldots + \beta^{i+j})\cdot(\beta^{j}+\ldots + \beta^{i+j}-K_S)}{2} + \chi(\O_S)\\
	&=\sum_{j\leq k< l\leq i+j} \beta^k\beta^l + \chi(\O_S)\,.
\end{align*}
It follows that for $k\leq l$, the multiplicity with which the term
\[(\t^{i+1} - \t^{-i})\cdot \beta^k\beta^l\]
appears in $T_L$ is given by
\begin{align*}
\mu(i,k,l)
	\coloneqq {} & \#\{j \mid 0 \leq j < k \leq l \leq i+j \leq s\}- \#\{j \mid 0 < j \leq k < l \leq i+j \leq s\}\\
	= {} & \#\{j \mid 0,l-i \leq j \leq k - 1, s - i\}- \#\{j \mid 1,l-i \leq j \leq k, s - i; k<l\}\\
	= {} &
\begin{cases}\left.
	\begin{cases}
	-1&\mbox{if } l-k \leq i< \min (l, s - k +1)\\
	1&\mbox{if } \max(l, s - k + 1) \leq i\leq s\\
	0&\mbox{else}
	\end{cases} \right\}
		&\mbox{if } k<l\\
	\min\{i,s-k+1,k,s-i+1\}
		&\mbox{if } k=l \,,
\end{cases}
\end{align*}
so we have
\begin{align}\label{EqFrontFF}
N_L = T_L &= \sum_{i=1}^s \left(\chi(\O_S)+ \sum_{k\leq l} \mu(i,k,l)\cdot \beta^k\beta^l \right)\cdot(\t^{i+1} - t^{-i})\,.
\end{align}

We define the following rational numbers:
\begin{alignat*}{3}
F^{(s+1)}_0
	&\coloneqq \frac{(-1)^{s}}{s+1}\,;\\
F^{(s+1)}_{kk} 
	&\coloneqq \frac{(-1)^{sk}} {\binom{s+1}{k}} \quad
	&&\text{for}\quad 1\leq k\leq s \,;\\
F^{(s+1)}_{kl}
	&\coloneqq \frac{l(s+1-k)}{(l-k)(s+1)} \quad
	&&\text{for}\quad 1\leq k<l \leq s \,.\\
\end{alignat*}

\begin{proposition}\label{PropFrontUnref}
Recall that we assume $\SW(\beta) \neq 0$. We have
\[F(S,\beta)
=\left(F^{(s+1)}_0\right)^{\chi(\O_S)}
\prod_{k\leq l} \left(F^{(s+1)}_{kl}\right)^{\beta^k\beta^l}\,.
\]
\end{proposition}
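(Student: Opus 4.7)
The plan is to compute $F(S,\beta)=1/e(N_L)$ directly from the closed form \eqref{EqFrontFF} for $N_L$, then read off the exponents of $\chi(\O_S)$, $\beta^k\beta^l$ ($k<l$) and $(\beta^k)^2$ separately. Since $N_L$ lives in $K^{\CC^*}_0(\mathrm{pt})$, each summand $\t^a$ with $a\neq 0$ contributes $a\t$ to the Euler class; the hypothesis $\SW(\beta)\neq 0$ has already killed the $(\t-1)$-term in \eqref{EqFrontFF}, so $N_L$ has virtual rank zero, the equivariant parameter $\t$ cancels, and
\[
F(S,\beta)\;=\;\prod_{i=1}^s\Big(\tfrac{-i}{i+1}\Big)^{m_i},\qquad m_i=\chi(\O_S)+\sum_{k\leq l}\mu(i,k,l)\,\beta^k\beta^l.
\]
The task then splits into identifying three ``line products'': $\prod_{i=1}^s \tfrac{-i}{i+1}$ (giving the exponent of $\chi(\O_S)$), $\prod_{i=1}^s\bigl(\tfrac{-i}{i+1}\bigr)^{\mu(i,k,l)}$ for $k<l$, and $\prod_{i=1}^s\bigl(\tfrac{-i}{i+1}\bigr)^{\mu(i,k,k)}$, with $F^{(s+1)}_0$, $F^{(s+1)}_{kl}$, and $F^{(s+1)}_{kk}$ respectively.

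The first product telescopes immediately to $(-1)^s/(s+1)=F^{(s+1)}_0$. For the off-diagonal product with $k<l$, I would use the case analysis already in the statement: $\mu(i,k,l)$ equals $-1$ on an interval starting at $i=l-k$ and $+1$ on an interval of equal length ending at $i=s$. Telescoping $\prod \tfrac{-i}{i+1}$ over each interval gives $(-1)^{\bullet}\,\tfrac{l}{l-k}$ and $(-1)^{\bullet}\,\tfrac{s+1-k}{s+1}$, whose signs cancel, yielding $\tfrac{l(s+1-k)}{(l-k)(s+1)}=F^{(s+1)}_{kl}$. The two subcases $k+l\leq s+1$ and $k+l>s+1$ produce the same value, which is a helpful consistency check.

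The diagonal case requires the most care. Put $m=\min(k,s+1-k)$, $M=\max(k,s+1-k)$, so $m+M=s+1$; then $\nu_i:=\mu(i,k,k)$ has a trapezoidal shape on $[1,s]$: $\nu_i=i$ on $[1,m]$, $\nu_i=m$ on $[m,M]$, and $\nu_i=s+1-i$ on $[M,s]$. Reindexing the denominator,
\[
\prod_{i=1}^s\frac{i^{\nu_i}}{(i+1)^{\nu_i}}=\frac{1}{(s+1)^{\nu_s}}\prod_{i=2}^s i^{\,\nu_i-\nu_{i-1}},
\]
and since $\nu_i-\nu_{i-1}$ equals $+1$ on $[2,m]$, $0$ on $(m,M]$ and $-1$ on $(M,s]$, the right-hand side collapses to $\tfrac{m!\,M!}{(s+1)!}=1/\binom{s+1}{k}$. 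Finally, a direct count gives $\sum_{i=1}^s\nu_i=mM=k(s+1-k)\equiv ks\pmod{2}$, so the total sign is $(-1)^{sk}$ and the product equals $F^{(s+1)}_{kk}$. The main obstacle is purely bookkeeping: reconciling the degenerate boundary cases ($m=1$, $M=s$, or $m=M$) in which the trapezoid collapses; each must be verified separately, but each yields the same clean formula.
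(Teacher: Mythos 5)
Your proposal is correct and follows essentially the same route as the paper: apply $1/e(\,)$ to the closed form \eqref{EqFrontFF}, use $1/e(\t^{i+1}-\t^{-i})=-i/(i+1)$, and telescope the three resulting products (the diagonal one via the trapezoidal profile of $\mu(i,k,k)$ with $\min/\max(k,s+1-k)$, exactly as in the paper's proof). Your reindexing of the diagonal product through the differences $\nu_i-\nu_{i-1}$ and the parity check $k(s+1-k)\equiv ks \pmod 2$ are just alternative bookkeeping for the same computation.
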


\begin{proof}
Applying $\frac{1}{e(\,)}$ to equation \eqref{EqFrontFF}, we obtain
\begin{align*}
F(S,\beta)
	& = \frac{1}{e(N_L)} \\
	& = e\left( \sum_{i=1}^s  \t^{i+1} - t^{-i} \right)^{-\chi(\O_S)}
	\prod_{k\leq l} e\left(\sum_{i=1}^s \mu(i,k,l) (\t^{i+1} - t^{-i}) \right)^{-\beta^k\beta^l}
\end{align*}
Note that we have
\[\frac{1}{e(\t^{i+1} - \t^{-i})}= \frac{-i}{i+1}\,,\]
and hence
\[e\left(\sum_{i=1}^s (\t^{i+1} - \t^i)\right)^{-1} = \frac{-1 \cdots -i}{2 \cdots (i+1)} = \frac{(-1)^s}{s+1}\]
For $k< l$ we find
\begin{align*}
e\left(\sum_{i=1}^s \mu(i,k,l) \cdot(\t^{i+1} - \t^{-i})\right)^{-1}
	& = \prod_{i = l-k}^{\min(l-1,s-k)} \left(\frac{-i}{i+1}\right)^{-1} \cdot \prod_{i=\max(l,s-k+1)}^s \frac{-i}{i+1}\\
	& = \frac{l( s+1-k)}{(l-k)(s+1)}\,.
\end{align*}
Finally, write
\[a \coloneqq \min(k,s+1-k) \quad \text{and}\quad b\coloneqq \max(k,s+1-k)\,,\]
so we have
\begin{align*}
e\left(\sum_{i=1}^s \mu(i,k,k) \cdot(\t^{i+1} - \t^{-i})\right)^{-1} 
	& =
		\prod_{i = 1}^{a}	\left(\frac{-i}{i+1}\right)^i
		\prod_{i=a+1}^{b-1}	\left(\frac{-i}{i+1}\right)^a
		\prod_{i=b}^s		\left(\frac{-i}{i+1}\right)^{s+1-i} \\
	& = (-1)^{sk} \frac{1\cdots a}{(b+1)\cdots (s+1)} \\
	& =  \frac{(-1)^{sk}}{\binom{s+1}{k}} \qedhere
\end{align*}
\end{proof}

\begin{notation}
We will use \emph{quantum integers}, which are given by
\[ [i]_y \coloneqq \frac{y^{i/2} - y^{-i/2}}{y^{1/2} - y^{-1/2}}\,.\]
We will also use the notation
\[ \binom{i}{j}_y \coloneqq \frac{ [i]_y \cdots  [i - j +1]_y}{[1]_y \cdots [j]_y} \]
for non-negative integers $i\leq j$.
\end{notation}

Define the following rational functions in $y^{1/2}$:

\begin{alignat*}{3}
F^{(s+1)}_0(y)
	&\coloneqq \frac{(-1)^s}{[s+1]_y}\,;\\
F^{(s+1)}_{kk}(y)
	&\coloneqq \frac{(-1)^{sk}}{\binom{s+1}{k}_y} \quad
	&& \text{for}\quad 1\leq k\leq s \,;\\
F^{(s+1)}_{kl}(y)
	&\coloneqq \frac{[l]_y [s+1-k]_y}{[l-k]_y [s+1]_y } \quad
	&& \text{for}\quad 1\leq k<l \leq s \,.\\
\end{alignat*}

\begin{proposition}\label{PropFront}
Assume that $\SW(\beta) \neq 0$. Then we have
\[F(S,\beta,y)
=\left(F^{(s+1)}_0(y)\right)^{\chi(\O_S)}
\prod_{k\leq l} \left(F^{(s+1)}_{kl}(y)\right)^{\beta^k\beta^l}\,.
\]
\end{proposition}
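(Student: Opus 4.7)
The plan is to adapt the proof of Proposition \ref{PropFrontUnref}, replacing ordinary integers by quantum integers throughout. Since $\SW(\beta)\neq 0$ implies $T_L = N_L$ by \eqref{EqNoFix}, the fixed part $T_L^{\CC^*}$ vanishes and the Todd factor is trivial, so it suffices to compute
\[F(S,\beta,y) = \left[\frac{\ch(K_L^{1/2})}{\ch(\Lambda^\bullet(N_L^\vee))}\right]_{\ch(\t)=y}.\]
Since both $\det$ and $\Lambda^\bullet$ are multiplicative in $K$-theory, the explicit decomposition \eqref{EqFrontFF} reduces the problem to evaluating this ratio on each elementary summand $\t^{i+1}-\t^{-i}$ and raising to the appropriate power.

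First I would carry out this single-block computation. One finds $K_L = \det(T_L^\vee) = \t^{-(2i+1)}$, so $\ch(K_L^{1/2}) = y^{-(2i+1)/2}$, while $\ch(\Lambda^\bullet(N_L^\vee)) = (1-y^{-(i+1)})/(1-y^i)$. Using the identities $1-y^i = -y^{i/2}[i]_y(y^{1/2}-y^{-1/2})$ and $1-y^{-(i+1)} = y^{-(i+1)/2}[i+1]_y(y^{1/2}-y^{-1/2})$, the half-integer powers of $y$ combine to cancel the factor $y^{-(2i+1)/2}$ coming from $K_L^{1/2}$, and the block contributes
\[\frac{-[i]_y}{[i+1]_y},\]
the quantum analogue of the factor $-i/(i+1)$ appearing in the unrefined proof.

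The remainder is then a direct $y$-deformation of the combinatorial calculation in Proposition~\ref{PropFrontUnref}. The telescoping identity $\prod_{i=a}^{b}(-[i]_y/[i+1]_y) = (-1)^{b-a+1}[a]_y/[b+1]_y$ holds verbatim with integers replaced by quantum integers. Applied to the $\chi(\O_S)$ exponent this yields $(-1)^s/[s+1]_y$, matching $F^{(s+1)}_0(y)$. Applied to the two intervals in the support of $\mu(i,k,l)$ for $k<l$ it gives $[l]_y[s+1-k]_y/([l-k]_y[s+1]_y)$ (the same expression in both cases $k+l\leq s+1$ and $k+l>s+1$), matching $F^{(s+1)}_{kl}(y)$. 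For $k=l$, I would group the exponents as in the unrefined argument, with $a=\min(k,s+1-k)$ and $b=\max(k,s+1-k)$, and the same telescoping collapses the product to $(-1)^{ab}[1]_y\cdots[a]_y/([s+2-a]_y\cdots[s+1]_y)$; observing that $ab\equiv sk\pmod 2$ and recognising the surviving ratio as $1/\binom{s+1}{k}_y$ yields $F^{(s+1)}_{kk}(y)$.

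The only real obstacle is the single-block identity: one must verify that the half-integer powers of $y$ produced by the square root of the virtual canonical bundle exactly cancel those produced when each factor $1-y^{\pm j}$ is rewritten through $[j]_y$. Once this is in place, the rest of the proof is just the unrefined sign and telescoping bookkeeping with $j\rightsquigarrow[j]_y$, and specialisation at $y=1$ recovers Proposition~\ref{PropFrontUnref} as a sanity check.
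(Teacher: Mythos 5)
Your proposal is correct and follows essentially the same route as the paper: use $T_L=N_L$ from \eqref{EqNoFix} to drop the Todd factor, evaluate the ratio $\ch(\det(\cdot^\vee))^{1/2}/\ch(\Lambda^\bullet(\cdot^\vee))$ on a single block $\t^{i+1}-\t^{-i}$ to get $-[i]_y/[i+1]_y$, and then rerun the telescoping of Proposition~\ref{PropFrontUnref} with quantum integers. Your single-block cancellation of half-integer powers and the sign check $ab\equiv sk\pmod 2$ in the diagonal case are both correct, and in fact spell out details the paper leaves to the reader.
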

\begin{proof}
Recall \eqref {EqNoFix} that $T_L$ has no fixed part, so we have
\begin{align*}
F(S,\beta,y)
	& = \left[\frac{\ch(K_L^{\frac{1}{2}})}{\ch(\Lambda^\bullet(N_L^\vee))} \Td((T_L)^{\CC^*}) \right]_{\ch(\t) = y}\\
	& = \left[\frac{\ch(\det(N_L^\vee))^{\frac{1}{2}}}{\ch(\Lambda^\bullet(N_L^\vee))} \right]_{\ch(\t) = y}\,.
\end{align*}
Note that we have
\begin{align*}
\left[\frac{\ch(\det((\t^{i+1} - \t^{-i})^\vee)^{\frac{1}{2}})}{\ch(\Lambda^\bullet((\t^{i+1} - \t^{-i})^\vee))} \right]_{\ch(\t) = y}
	& = -y^{1/2}\frac{y^i-1}{y^{i+1}-1} \\
	& = - \frac{[i]_y}{[i+1]_y} \,.
\end{align*}
Now follow the proof of Proposition~\ref{PropFrontUnref}.

\end{proof}

\begin{remark}\label{RemExpYII}
If $r = s+1$ is odd, note that $F(S,\beta,y)$ is a function in $y$, rather than in $\sqrt y$, for any $\beta = (\beta_0,\ldots,\beta_s)$ with $\SW(\beta)\neq 0$.
\end{remark}

\begin{example}
For rank $2$ we have
\[F(S,\beta,y) = \left(\frac{-y^{1/2}}{1+y}\right)^{\chi(\O_S)+\beta^1\beta^1}\,,\]
and for rank $3$
\[F(S,\beta,y) = \left(\frac{y}{1+y+y^2}\right)^{\chi(\O_S)+\beta^1\beta^1+\beta^2\beta^2}\left(\frac{(y+1)^2}{1+y+y^2}\right)^{\beta^1\beta^2}\,.\]
\end{example}

\section{Universality}\label{SecUniv}
Let $S$ be a smooth projective surface, not necessarily with $H^1(\O_S)=0$ or $p_q>0$. For non-negative integers $n=(n_0,\ldots,n_s)$ and classes $\beta = (\beta_1,\ldots,\beta_s)$, consider the rational number $Q_n(S,\beta)$ defined in Section~\ref{SecTrace} as an integral over
\[\Hilb^n(S) = S^{[n_0]}\times \cdots \times S^{[n_s]}\,.\]
Using the notation
\[q^n \coloneqq q_0^{n_0}\cdots q_s^{n_s}\]
we form the generating series
\[\sum_n Q_n(S,\beta)\, q^n\,.\]
The following universality result Proposition \ref{PropUnivUnref}, or rather its refinement Proposition \ref{PropUniv}, is the main ingredient for the proof of Theorem \ref{Theorem1}.

\begin{remark}\label{RemDivCl}
In Section \ref{SecTrace}, the integrals $Q_n(S,\beta)$ were defined in terms of a lift of $\beta$ to a vector of line bundles $L$, such that $\beta = \beta(L)$ (see Notation \ref{Notation}). Since we do not assume $H^1(\O_S)=0$ in this section, this lift involves a a lift of the $\beta_i$ to divisor classes of $S$. We assume we have made such choice, and we consider $\beta$ as a vector of classes in $A^1(S)$. By Proposition~\ref{PropUnivUnref}, $Q_n(S,\beta)$ does not depend on this choice.
\end{remark}

\begin{proposition}\label{PropUnivUnref}
For each symbol
\[\underline{\mathfrak{N}}\in \left\{\underline{\chi(\O_S)},\underline{K_S^2},\underline{K_S\beta^i}, \underline{\beta^i \beta^j}\right\}_{1\leq i\leq j\leq s}\]
there is a power series $A^{(s+1)}_{\underline{\mathfrak{N}}} \in \QQ[[q_0,\ldots,q_s]]$, starting with $1$, and depending only on $s$, such that
\[\sum_n Q_n(S,\beta)\, q^n= \prod_{\underline{\fN}}(A^{(s+1)}_{\underline{\mathfrak{N}}})^{\fN}\]
for \emph{any} smooth projective surface $S$ and classes $\beta_1,\ldots,\beta_s\in A^1(S)$.
\end{proposition}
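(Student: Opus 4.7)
The plan is twofold: first I will establish that each $Q_n(S,\beta)$ is a universal polynomial in the Chern numbers $\chi(\O_S)$, $K_S^2$, $K_S\beta^i$ and $\beta^i\beta^j$, and then I will use multiplicativity of the generating series under disjoint union to upgrade this polynomial universality to the claimed product/exponential form.

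For the polynomial universality, I would first rewrite the integrand in manifestly tautological form. By construction, $N_{L,0}^{[n]}$ is a $\ZZ[\t,\t^{-1}]$-combination of traceless $\RHom_\pi$-classes built from $L_i\otimes\I^{[n_i]}$, and both $R\pi_*\O_S(\beta_i)$ and $\RHom_\pi(\I^{[n_{i-1}]},\I^{[n_i]}(\beta_i))$ are expressible via Grothendieck-Riemann-Roch in terms of Chern classes of the universal ideals, of the line bundles $L_i$ and $\O_S(\beta_i)$, and of $TS$. Expanding $e(\cdot)$ and $1/e(\cdot)$ and multiplying out produces an integrand that is a universal polynomial expression (depending only on $n$ and $s$) in descendent classes of the $\I^{[n_i]}$ together with pullbacks of Chern classes of $L_i$, $\O_S(\beta_i)$, and $TS$. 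The standard Ellingsrud-Göttsche-Lehn type universality result, extended to products of Hilbert schemes carrying several auxiliary line bundles, then says that the integral of any such expression over $\Hilb^n(S)$ is a universal polynomial in $\chi(\O_S)$, $K_S^2$, $K_S\cdot c_1(L_i)$ and $c_1(L_i)\cdot c_1(L_j)$. Since the integrand involves the $L_i$ only through the classes $\beta_i = c_1(L_i) - c_1(L_{i-1}) - K_S$ and $\beta^i = K_S - \beta_i$, the integral in fact depends only on the claimed variables; in particular this confirms the independence of the lift asserted in Remark \ref{RemDivCl}.

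To turn polynomiality into the product form, I would use multiplicativity of the generating series under disjoint union of surfaces. For $S = S'\sqcup S''$ we have a Künneth-style decomposition $\Hilb^n(S) = \bigsqcup_{n'+n''=n}\Hilb^{n'}(S')\times \Hilb^{n''}(S'')$, all tautological sheaves split as external sums of the corresponding sheaves on $S'$ and $S''$, and the Chern numbers $\chi(\O_S)$, $K_S^2$, $K_S\beta^i$, $\beta^i\beta^j$ are all additive under this decomposition. It follows that $Z_{S,\beta}(q) \coloneqq \sum_n Q_n(S,\beta)\,q^n$ is multiplicative under disjoint union, so $\log Z_{S,\beta}(q)$ -- being a polynomial in the Chern numbers that is \emph{additive} under their disjoint-union sum -- must be \emph{linear} in them. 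Writing this linear form as $\log Z_{S,\beta}(q) = \sum_{\underline{\fN}} \fN \cdot \log A^{(s+1)}_{\underline{\fN}}(q)$ defines the universal power series with constant term $1$, and exponentiating yields the claimed formula.

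The main obstacle is the polynomial universality invoked in the first step: although the classical Ellingsrud-Göttsche-Lehn theorem treats a single Hilbert scheme with one auxiliary line bundle, the statement needed here covers a product of Hilbert schemes with several line bundles. One way to handle the extension is an explicit Nakajima-basis analysis, reducing the integral to combinatorial sums that are manifestly universal in the Chern data; another is a bordism-style argument that reduces to surfaces built from a short list of generators. Once this extension is in place the multiplicativity argument of the last step is formal.
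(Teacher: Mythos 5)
Your overall strategy is the same as the paper's: establish polynomiality of $Q_n(S,\beta)$ in the Chern numbers via the techniques of Ellingsrud--G\"ottsche--Lehn, prove multiplicativity of the generating series under disjoint union, and conclude the product form via G\"ottsche's argument that an additive polynomial in the Chern numbers must be linear. The one genuine gap is your closing claim that ``the multiplicativity argument of the last step is formal.'' It is not, because of the factor
\[
\prod_{i=1}^s e\left(R\pi_*\O(\beta_i) - \RHom_\pi\left(\I^{[n_{i-1}]},\I^{[n_i]}(\beta_i)\right)\right)
\]
in the integrand of $Q_n(S,\beta)$. Here $e(\,\cdot\,)$ is by convention the $(n_{i-1}+n_i)$-th Chern class of a $K$-theory class of virtual rank $n_{i-1}+n_i$ (Remark~\ref{RemOkVan}), not the Euler class of an honest bundle. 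Under the K\"unneth decomposition $\Hilb^n(S'\sqcup S'') = \bigsqcup_{i+j=n}\Hilb^i(S')\times\Hilb^j(S'')$ this class splits as an external sum of two such virtual classes, and the Whitney formula expresses the top Chern class of the sum as a full convolution $\sum_{a+b}c_a\cdot c_b$, not just the single product of the two ``top'' Chern classes of the summands. To kill the extra terms one needs the Chern classes of each summand to vanish above its virtual rank; this is exactly the generalized Carlsson--Okounkov vanishing of \cite{GT}, which the paper invokes at precisely this point and which is a nontrivial geometric input. (The factor $1/e(N_{L,0}^{[n]})$, by contrast, really is formal: the equivariant Euler class of a difference is defined as a ratio and is therefore multiplicative under the splitting.) Apart from this missing input --- and a harmless sign slip in your formula for $\beta_i$, which should read $\beta_i = c_1(L_i) - c_1(L_{i-1}) + K_S$ --- your argument matches the paper's.
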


\begin{proof}
By the techniques of \cite{EGL} (see also \cite{GNY}), the integral $Q_n(S,\beta)$ can be universally expressed as a polynomial $P_n(S,\beta)$ in the Chern numbers of $S$ and the classes $\beta^1,\ldots,\beta^s$. Following \cite[Proposition 2.3]{Goe}, it suffices to show that the generating series is multiplicative, i.e., that we have\
\[\sum Q_n(S\sqcup S',\beta+\beta')\,q^n = \sum Q_n(S,\beta)\,q^n \cdot\sum Q_n(S',\beta')\,q^n\]
for surfaces $S$ and $S'$ and $s$-tuples $\beta$ and $\beta'$ of classes in $A^1(S)$ and $A^1(S')$ respectively.

Note that
\begin{align}
\Hilb^n (S\sqcup S')	&= (S\sqcup S')^{[n_0]}\times\cdots\times(S\sqcup S')^{[n_s]} \nonumber \\
				&= \bigsqcup_{i_0+j_0 = n_0} (S^{[i_0]}\times S'^{[j_0]})\times\cdots\times\bigsqcup_{i_s+j_s = n_s} (S^{[i_s]}\times S'^{[j_s]}) \nonumber\\
				&= \bigsqcup_{\substack{i_0+j_0=n_0,\\\cdots\\i_s+j_s=n_s}} S^{[i_0]}\times S'^{[j_0]}\times\cdots\times S^{[i_s]}\times S'^{[j_s]} \nonumber\\
				&= \bigsqcup_{i+j = n} \Hilb^i(S) \times \Hilb^j(S')\,, \label{EqDec}
\end{align}
in which the last sum is taken over $s+1$-tuples $i = (i_0,\ldots,i_s)$ and $j= (j_0,\ldots,j_s)$ of non-negative integers with $n = i + j$. Consider the universal ideal sheaves
\[\I_{S\sqcup S'}^{[n_k]} \quad \mathrm{for} \quad k=0,\ldots,s\]
on
\[\Hilb^n(S\sqcup S')\times (S\sqcup S')\,.\]
For fixed $i$ and $j$ with $i+j = n$ and for $k = 0,\ldots,s$, we will write
\begin{align*}
p_k
	&\colon \Hilb^i(S)\times \Hilb^j(S') \times S
	 \rightarrow S^{[i_k]}\times S\\
q_k
	&\colon \Hilb^i(S)\times \Hilb^j(S') \times S'
	 \rightarrow S'^{[j_k]}\times S'
\end{align*}
for the projections. Over the components in the decomposition~\eqref{EqDec}, the universal sheaves are given by
\begin{align*}
\I_{S\sqcup S'}^{[n_k]}\Big\vert_{\Hilb^i(S)\times \Hilb^j(S') \times (S\sqcup S')}
	 = p_k^*\I^{[i_k]}_S  \oplus q_k^* \I_{S'}^{[j_k]}\,.
\end{align*}
Write
\[\pi\colon S\rightarrow *\,, \quad \pi'\colon S'\rightarrow *\quad \mathrm{and} \quad \pi\sqcup \pi'\colon S\sqcup S'\rightarrow *\]
for the projections.
Let $M$ and $M'$ be a line bundles on $S$ and $S'$ respectively. It follows that for $0\leq k,l \leq s$ we have
\begin{multline} \label{EqRHom}
\RHom_{\pi\sqcup\pi'}(\I_{S\sqcup S'}^{[n_k]},\I_{S\sqcup S'}^{[n_l]}\otimes (M\oplus M')) = \\
\sum_{i+j=n} \RHom_\pi(p_k^*\I_{S}^{[i_k]},p_l^*\I_{S}^{[i_l]}\otimes M)\oplus \RHom_{\pi'}(q_k^*\I_{S'}^{[j_k]},q_l^*\I_{S'}^{[j_l]}\otimes M')
\end{multline}
in the ring
\[K_0(\Hilb^n (S\sqcup S')) = \bigoplus_{i+j = n} K_0(\Hilb^i(S)\times\Hilb^j(S'))\,.\]
For any pair $i,j$ of $(s+1)$-tuples of non-negative integers, write
\begin{align*}
p
	& \colon \Hilb^i(S)\times\Hilb^j(S')\rightarrow  \Hilb^i(S)\\
q
	& \colon \Hilb^i(S)\times\Hilb^j(S')\rightarrow  \Hilb^j(S')\,.
\end{align*}

Let $L$ and $L'$ be $s+1$-tuples  of line bundles on $S$ and $S'$ respectively, such that $\beta = \beta(L)$ and $\beta' = \beta(L')$ (see notation \ref{Notation}). Consider the $K$-theory classes
\[N_{L\oplus L',0}^{[n]}\,,\quad N_{L,0}^{[i]}\quad \text{and}\quad N_{L',0}^{[j]}\]
as defined in Section \ref{SecVWInt}. Note that it is immediate from the definition, that these classes do not depend on the choice of $L$ and $L'$ (see also Remark \ref{RemDivCl}). By definition, $N^{[n]}_{L+L',0}$ is linear combination of classes of the form \eqref{EqRHom}, and we find
\[
N^{[n]}_{L+L',0} =
\sum_{i+j = n} p^*N^{[i]}_{L,0} + q^*N^{[j]}_{L',0}\,\,.
\]
It follows that
\[\frac{1}{e(N^{[n]}_{L+L',0})} = \sum_{i+j=n} \frac{1}{e(N^{[i]}_{L,0})}\cdot \frac{1}{e(N^{[n]}_{L',0})}\,.\]
Finally, the corresponding multiplicative property of the factor
\[\prod_{i=k}^s e\left(R\pi_*\O(\beta_k) - \RHom_\pi(\I^{[n_{k-1}]},\I^{[n_k]}(\beta_k))\right)\]
in the integrant of $Q_n(S,\beta)$
follows from the generalized Carlsson-Okounkov vanishing of \cite{GT} (see Remark~\ref{RemOkVan}). Integrating gives the result.
\end{proof}


The proof of Proposition \ref{PropUnivUnref} also gives the following refined result.

\begin{proposition} \label{PropUniv}
For each symbol
\[\underline{\mathfrak{N}}\in \left\{\underline{\chi(\O_S)}, \underline{K_S^2},\underline{K_S\beta^i}, \underline{\beta^i \beta^j}\right\}_{1\leq i\leq j\leq s}\]
there is a power series $A^{(s+1)}_{\underline{\mathfrak{N}}}(y) \in \QQ(y)[[q_0,\ldots,q_s]]$, starting with $1$, such that
\[\sum_n Q_n(S,\beta,y)\, q^n= \prod_{\underline{\fN}}(A^{(s+1)}_{\underline{\mathfrak{N}}}(y))^{\fN} \]
for \emph{any} smooth projective surface $S$ and classes $\beta_1,\ldots,\beta_s\in A^1(S)$.
\end{proposition}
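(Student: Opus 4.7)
The plan is to mimic the proof of Proposition~\ref{PropUnivUnref} with the refined integrand in place of $1/e(N^{[n]}_{L,0})$, checking at each step that the two additional features of the refined setting—the extra Todd and $\Lambda^\bullet$ factors, and the formal substitution $\ch(\t)=y$—preserve both the polynomial dependence on Chern numbers and the multiplicativity under disjoint unions.

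First I would verify that $Q_n(S,\beta,y)$ is, for each fixed $n$, expressible as a universal polynomial in $\chi(\O_S)$, $K_S^2$, $K_S\beta^i$ and $\beta^i\beta^j$ with coefficients in $\QQ(y)$. The integrand is a polynomial in Chern classes of $K$-theory classes of the form $\RHom_\pi(\I^{[n_k]},\I^{[n_l]}\otimes M)$ for various line bundles $M$ built from the $L_i$ and the $\O_S(\beta_i)$; the Chern character of the square root $(K_{L,0}^{[n]})^{1/2}$, of $\Lambda^\bullet((N_{L,0}^{[n]})^\vee)^{-1}$, and the Todd class of $(T_{L,0}^{[n]})^{\CC^*}$ are all built from such Chern classes together with rational functions of $\ch(\t)$, and after the substitution $\ch(\t)=y$ these rational functions land in $\QQ(\sqrt{y})$. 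The EGL formalism of \cite{EGL}, applied exactly as in \cite{GNY}, then yields the polynomial expression in Chern numbers, with coefficients a priori in $\QQ(\sqrt{y})$. Remark~\ref{RemExpY} (even exponent of $\t$ in $K_{L,0}^{[n]}$) shows that the $\sqrt{y}$ in the numerator of the Chern character of the square root always pairs with another $\sqrt{y}$, so the coefficients lie in $\QQ(y)$.

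Next I would prove multiplicativity under $S\sqcup S'$ by reusing the decomposition \eqref{EqDec} and the $K$-theoretic identity \eqref{EqRHom}. Applied to the three classes appearing in the refined setting this gives
\[
N^{[n]}_{L\oplus L',0} = \sum_{i+j=n} p^*N^{[i]}_{L,0} + q^*N^{[j]}_{L',0},
\]
and the analogous identities for $T_{L,0}^{[n]}$ and $K_{L,0}^{[n]}$ (the latter with $\det$ replaced by an additive map in $K$-theory, followed by the canonical choice of square root). Since $\ch$ turns direct sums into sums, $\Td$ and $\ch((\cdot)^{1/2})$ are multiplicative under direct sums in $K$-theory, and $\Lambda^\bullet$ sends direct sums to tensor products, each of the three factors in the refined integrand splits as a product of a factor pulled back from $\Hilb^i(S)$ and one from $\Hilb^j(S')$. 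The generalized Carlsson-Okounkov vanishing \cite{GT} again handles the $\prod_{i=1}^s e(\cdots)$ factor exactly as in Proposition~\ref{PropUnivUnref}.

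Having verified polynomiality and multiplicativity, the standard argument of \cite[Proposition~2.3]{Goe} produces the universal series $A^{(s+1)}_{\underline{\mathfrak{N}}}(y)\in\QQ(y)[[q_0,\ldots,q_s]]$ with the required product expansion; the normalization $A^{(s+1)}_{\underline{\mathfrak{N}}}(y)\equiv 1\bmod (q_0,\ldots,q_s)$ comes from the fact that the $n=0$ integrand equals $1$ (since $N_{L,0}^{[0]}=T_{L,0}^{[0]}=0$). The main delicate point is the behavior of the square root of the virtual canonical bundle under disjoint unions: one must check that the canonical choice of square root for $K_{L\oplus L',0}^{[n]}$ restricts, on each stratum of \eqref{EqDec}, to the tensor product of the canonical square roots of $p^*K_{L,0}^{[i]}$ and $q^*K_{L',0}^{[j]}$. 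This is a consequence of the compatibility of the square root with the $K$-theoretic decomposition together with the evenness of the $\t$-exponent used above, and is the only genuinely new input beyond the unrefined argument.
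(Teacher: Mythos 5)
Your proposal is correct and follows essentially the same route as the paper, which simply invokes the proof of Proposition~\ref{PropUnivUnref} together with the multiplicative properties of $\ch$, $\Lambda^\bullet$, $\det$, and $\Td$, and uses Remark~\ref{RemExpY} to land the coefficients in $\QQ(y)$. Your additional care about the compatibility of the canonical square root of $K_{L,0}^{[n]}$ with the disjoint-union decomposition is a point the paper leaves implicit, but it does not change the argument.
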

\begin{proof}
A similar proof holds, using the multiplicative properties of $\ch$, $\Lambda^\bullet$, $\det$, and $\Td$.
Note that by Remark \ref{RemExpY}, the universal series take coefficients in $\QQ(y)$, rather than in $\QQ(\sqrt y)$.
\end{proof}


\section{Proof of Theorem A} \label{SecProof}
As usual, we fix a rank $r = s+1$. In this section, we will identify
\[q \coloneqq q_0=\ldots = q_{s}\,,\]
so the equation in Proposition \ref{PropUniv} becomes
\begin{equation}\label{EqProofQ}
\sum_n Q_n(S,\beta,y)\, q^{|n|}= \prod_{\underline{\fN}}(A^{(r)}_{\underline{\mathfrak{N}}}(y))^{\fN}
\end{equation}
in the ring $\QQ(y)[[q]]$, where we use the notation $|n| = n_0+\ldots+n_s$.

Let $L = (L_0,\ldots,L_s)$ be line bundles on a surface $S$, and let
\begin{align*}
\beta = \beta(L)
	& = (\beta_1,\ldots,\beta_s) \\
	& = (K_S - \beta^1, \ldots, K_S - \beta^s)
\end{align*}
be given as in Notation \ref{Notation}. For non-negative integers $n=(n_0,\ldots,n_s)$ and ideal sheaves $I_i\in S^{[n_i]}$, consider the sheaf
\[E = L_0\otimes I_0 \oplus \ldots \oplus L_s\otimes I_s\,.\]
In the notation of Section \ref{SecMod}, $E$ is a fibre of the family $E_\L^{[n]}$ of sheaves on $S$ over $\Hilb_\beta^{n}(S)$. By Lemma \ref{LemmaChern} we have
\[c_2(E) = |n| +  \frac{r-1}{2r} c_1(E)^2 - \sum_{i<j}  \frac{i (r - j)}{r} \beta^i\beta^j  - \sum_i \frac{i (r - i)}{2r}  (\beta^i)^2\,.\]
We will write
\[d(\beta) \coloneqq - \sum_{i<j}  \frac{i (r - j)}{r} \beta^i\beta^j  - \sum_i \frac{i (r - i)}{2r}  (\beta^i)^2\,,\]
so that we have
\begin{equation}\label{Eqc2Degree}
q^{\frac{1-r}{2r} c_1(E)^2} q^{c_2(E)} = q^{|n| + d(\beta)} \,.
\end{equation}
Finally, recall that for any surface $S$ with $H^1(\O_S)=0$ and $p_g(S)>0$, and for $\beta$ with
\[\SW(\beta) = \SW(\beta_1)\cdots\SW(\beta_s) \neq 0\]
we have, by Proposition \ref{PropFront},
\begin{equation}\label{EqProofF}
F(S,\beta,y)
=\left(F^{(r)}_0(y)\right)^{\chi(\O_S)}
\prod_{k\leq l} \left(F^{(r)}_{kl}(y)\right)^{\beta^k\beta^l}\,,
\end{equation}
where $F(S,\beta,y)$ is defined as in Section~\ref{SecTrace}.

Define the following Laurent series in $q^\frac{1}{2r}$ with coefficients in $\QQ(\sqrt y)$:
\begin{align*}
A
	& \coloneqq F^{(r)}_0(y)\,
		A^{(r)}_{\underline{\chi(\O_S)}}(y)\, (-1)^{(r-1)} \\
B
	& \coloneqq A^{(r)}_{\underline{K_S^2}}(y) \\
C_{ij}
	& \coloneqq q^{\frac{i(j-r)}{r}}\, F^{(r)}_{ij}(y)\,
		A^{(r)}_{\underline{\beta^i\beta^j}}(y)
		\quad \text{for}\quad 1\leq i<j \leq r-1\,;\\
C_{ii}
	& \coloneqq q^{\frac{i(i-r)}{2r}}\, F^{(r)}_{ii}(y)\,
		A^{(r)}_{\underline{\beta^i\beta^i}}(y)\, A^{(r)}_{\underline{\beta^iK_S}}(y)
		\quad \text{for} \quad 1 \leq j \leq r-1 \,.
\end{align*}
\begin{proof}[Proof of Theorem \ref{Theorem1}]
First note that, by definition, the Laurent series are universal in the sense that they \emph{only} depend on $r$. Now let $S$ be a surface with $H^1(\O_S)=0$ and $p_g(S)>0$. We have
\begin{align*}
\Z_{S,r,c_1}(q,y)
	& = \frac{q^{\frac{1-r}{2r} c_1^2}}{\#H^2(S,\ZZ)[r] } \, \sum_{c_2\in\ZZ} \VW_{1^r,c_1,c_2}(S,y)\, q^{c_2} \\
	& = q^{\frac{1-r}{2r} c_1^2} \, \sum_{c_2\in\ZZ} \hat{\sum_{\beta,n}} \VW_\beta^{[n]}(y)\, q^{c_2}
		& \text{by \eqref{EqSumContRef}} \\
	& = \sum_\beta \sum_{n\in (\ZZ_{\geq 0})^r} \VW_\beta^{[n]}(y)\, q^{|n| + d(\beta)}
		& \text{by \eqref{Eqc2Degree}} \\
	& = \sum_\beta \SW(\beta)F(S,\beta,y) \sum_{n\in (\ZZ_{\geq 0})^r} Q_n(S,\beta,y)\, q^{|n| + d(\beta)}
		& \text{by \eqref{EqVWContFac}} \\
	& = \sum_\beta \SW(\beta^\vee) A^{\chi(\O_S)} B^{K_S^2} \prod_{i\leq j} C_{ij}^{\beta^i\beta^j} \,.
		& \text{by \eqref{EqProofQ} and \eqref{EqProofF}}
\end{align*}
Here the symbol {\Small $\displaystyle\sum_\beta$} denotes a sum over $(r-1)$-tuples $\beta = (\beta_1,\ldots,\beta_{r-1})$ satisfying
\[c_1 \equiv \sum_{i=1}^{r-1} i \beta^i \mod rH^2(S,\ZZ)\,.\]
The sum {\Small $\displaystyle\hat{\sum_{\beta,n}}$}
is taken over $\beta$ as above, and $n \in (\ZZ_{\geq 0})^r$ with
\[c_2 = |n| + \frac{r-1}{2r} c_1^2 + d(\beta)\]
cf.\ Proposition~\ref{PropVirClass}. Finally, we have used the notation $\beta^\vee = (\beta^1,\ldots,\beta^{r-1})$, and the equation
\begin{align*}
\SW(\beta^\vee)
	& = \SW(\beta^1)\cdots \SW(\beta^{r-1}) \\
	& = (-1)^{(r-1)\chi(\O_S)} \, \SW(\beta) \,,
\end{align*}
which follows from \cite[Proposition 6.3.4]{Mo}.
\end{proof}

\begin{proposition}\label{PropSqrt}
Let $S$ be surface with $H^1(\O_S) = 0$ and $p_g(S)>0$, and let Chern $r$, $c_1$ and $c_2$ be Chern classes such that semistability implies stability. If $r$ is odd, we have
\[\VW_{1^r,c_1,c_2}(S,y) \in \QQ(y) \subset \QQ(\sqrt y)\,.\]
\end{proposition}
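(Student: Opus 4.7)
The plan is to assemble the statement directly from the factorization in equation~\eqref{EqVWContFac} together with the two rationality remarks already in the paper, namely Remark~\ref{RemExpY} and Remark~\ref{RemExpYII}. By~\eqref{EqSumContRef},
\[
\VW_{1^r,c_1,c_2}(S,y) \;=\; \#H^2(S,\ZZ)[r]\;\sum_{\beta,n}\VW_\beta^{[n]}(y),
\]
so it suffices to show that each individual contribution $\VW_\beta^{[n]}(y)$ lies in $\QQ(y)$ when $r$ is odd.

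First I would recall that by~\eqref{EqVWContFac} we have the factorization
\[
\VW_\beta^{[n]}(y) \;=\; \SW(\beta)\,F(S,\beta,y)\,Q_n(S,\beta,y).
\]
If $\SW(\beta)=0$ the contribution vanishes and contributes nothing to the sum, so I may restrict attention to those $\beta$ with $\SW(\beta)\neq 0$. For such $\beta$, Remark~\ref{RemExpYII} (which follows from the explicit formula of Proposition~\ref{PropFront} and the observation that $F^{(r)}_{0}(y)$ and the $F^{(r)}_{kl}(y)$ are functions in $y$ whenever $r$ is odd, since each appearing quantum integer $[i]_y$ is paired in a way that the half-integer powers of $y$ cancel) gives $F(S,\beta,y)\in\QQ(y)$. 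Independently, Remark~\ref{RemExpY} tells us that $Q_n(S,\beta,y)\in\QQ(y)$, because $\t$ appears in $K_{L,0}^{[n]}$ with even exponent and so the square root $(K_{L,0}^{[n]})^{1/2}$ introduces no half-integer power of $y$. Since $\SW(\beta)\in\ZZ$, the product of the three factors is in $\QQ(y)$, and summing finitely many such terms preserves this.

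The only thing that requires a brief sanity check is the oddness condition in the first factor. The quickest way to make Remark~\ref{RemExpYII} airtight is to revisit Proposition~\ref{PropFront}: writing
\[
F^{(r)}_{kk}(y) \;=\; \frac{(-1)^{s k}}{\binom{s+1}{k}_y},\qquad F^{(r)}_{kl}(y) \;=\; \frac{[l]_y[s+1-k]_y}{[l-k]_y[s+1]_y},\qquad F^{(r)}_{0}(y) \;=\; \frac{(-1)^s}{[s+1]_y},
\]
each quotient of quantum integers can be written as a ratio of polynomials in $y$ times an overall half-integer power $y^{a/2}$ where $a$ is the difference of the indices of numerator and denominator. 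Using the identity $[n]_y = y^{-(n-1)/2}(1+y+\cdots+y^{n-1})$ one sees that when $r=s+1$ is odd the total half-integer exponent in each of these expressions is an integer, so all factors sit in $\QQ(y)$. There is no real obstacle here; it is a clean bookkeeping check, and once done the proposition follows immediately from the factorization and summation above.
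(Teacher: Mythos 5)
Your proposal is correct and follows essentially the same route as the paper: the paper's proof is a one-line citation of Proposition~\ref{PropUniv} (whose $\QQ(y)$-rationality rests on Remark~\ref{RemExpY}) together with Remark~\ref{RemExpYII}, which are exactly the two ingredients you combine through the factorization~\eqref{EqVWContFac} and the sum~\eqref{EqSumContRef}. Your extra bookkeeping on the quantum integers (in particular that $k(s+1-k)$ is even when $r=s+1$ is odd, so $F^{(r)}_{kk}(y)\in\QQ(y)$) is a correct and welcome expansion of Remark~\ref{RemExpYII}, but not a departure from the paper's argument.
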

\begin{proof}
By Proposition \ref{PropUniv} and Remark \ref{RemExpYII}, the Laurent series $A$, $B$ and $C_{ij}$ have coefficients in $\QQ(y)$.
\end{proof}

\section{Toric computations} \label{SecComp}

In order to determine the coefficients of the series
\[A^{(s+1)}_{\underline{\fN}} \quad \text{for} \quad \underline{\fN}\in \mathcal{N}\coloneqq \left\{\underline{\chi(\O_S)},\underline{K_S^2},\underline{K_S\beta^i}, \underline{\beta^i \beta^j}\right\}_{1\leq i\leq j\leq s}\,,\]
 up to some degree $N$. Cf.\ \cite{Goe}, and as we will explain in this section, it suffices to evaluate the integrals
\begin{equation}\label{EqIntegralRepost}
Q_n(S,\beta) = \int_{[\Hilb^{n}(S)]} \frac{ \prod_{i=1}^s e\left(R\pi_*\O(\beta_i) - \RHom_\pi(\I^{[n_{i-1}]},\I^{[n_i]}(\beta_i))\right)}{e\left(N_{L,0}^{[n]}\right)}
\end{equation}
for $|n|\leq N$ on $\PP^2$ and $\PP^1\times\PP^1$ and sufficiently many different $\beta$ (see Section~\ref{SecTrace} for notation and definitions).

Let $\omega_{2,1^s}$ denote the $\QQ$-vector space of cobordism classes of surfaces with $s$-tuples of line bundles, as defined in \cite{LP}, and let $B$ be a basis. For rank $s+1 = 2$, we could take
\[B = \left([\PP^1\times \PP^1 , \O],  [\PP^2, \O], [\PP^2, \omega_S^{\pm 1}]\right)\,,\]
and for rank $s + 1 = 3$
\[B = \left([\PP^1\times \PP^1 , (\O,\O)],  [\PP^2, (\O,\O)], [\PP^2, (\omega_S^{\pm 1},\O)] , [\PP^2, (\O,\omega_S^{\pm 1})], [\PP^2, (\omega_S,\omega_S)] \right)\,.\]

We can view the symbols $\underline \fN$ as coordinate functions on $\omega_{2,1^s}$. For a surface $S$, with an $s$-tuple $\beta\in (H^2(S,\ZZ))^s$, we write
\[\underline \fN (S,\beta) = \fN\,,\]
so we have
\[
	\underline{\chi(\O_S)}(S,\beta) = \chi(\O_S) \,, \quad
	\underline{K_S^2} (S,\beta) = K_S^2 \,, \quad
	\underline{K_S\beta^i}(S,\beta) = K_S\beta^i \,, \quad \ldots \,.
\]
Then the fact that $B$ is a basis can be expressed by the fact that the matrix
\[
M^{(s+1)}\coloneqq
\Big[ \underline \fN (S,\beta) \Big]_{
[S,\beta] \in B , ~
\underline \fN \in \mathcal{N}}
\]
is invertible. For the bases for rank $2$ and $3$ given above, we have
\[
M^{(2)} =
\begin{pmatrix}
1 & 8 & 0 & 0 \\
1 & 9 & 0 & 0 \\
1 & 9 & 9 & 9 \\
1 & 9 & -9 & 9
\end{pmatrix}
\]
and
\[
M^{(3)} =
\begin{pmatrix}
1 & 8 & 0 & 0 & 0 & 0 & 0 \\
1 & 9 & 0 & 0 & 0 & 0 & 0 \\
1 & 9 & 9 & 9 & 0 & 0 & 0 \\
1 & 9 & -9 & 9 & 0 & 0 & 0 \\
1 & 9 & 0 & 0 & 9 & 9 & 0 \\
1 & 9 & 0 & 0 &  -9 & 9 & 0 \\
1 & 9 & 9 & 9 &  9 & 9 & 9
\end{pmatrix}
\]
respectively.

Recall that by Proposition~\ref{PropUnivUnref}, we have
\[\sum_n Q_n(S,\beta)\, q^n= \prod_{\underline{\fN}}(A^{(s+1)}_{\underline{\mathfrak{N}}})^{\fN}\]
for any surface $S$, and curve classes $\beta = (\beta_1,\ldots, \beta_s)$. Taking the natural logarithm, we obtain
\[\log \sum_n Q_n(S,\beta)\, q^n= \sum_{\underline{\fN}} \fN \log A^{(s+1)}_{\underline{\mathfrak{N}}}\,.\]
By definition of $M$, we have
\[
\bigg[
\log \sum_n Q_n(S,\beta) q^n
\bigg]_{[S,\beta] \in B}
=
M \cdot
\bigg[
\log A^{(s+1)}_{\underline{\mathfrak{N}}}
\bigg]_{\underline \fN \in \mathcal{N} }
\,.
\]

Now assume we want to compute the power series $A_{\underline \fN}^{(s+1)}$ up to order $N$. Since $M$ is invertible, it suffices evaluate the integrals $Q_n(S,\beta)$ for all $n\in (\ZZ_{\geq0})^{s+1}$ with $|n| \leq N$. Note that by Proposition \ref{PropUniv}, the discussion above also applies to the refined case.

Let $S$ be any toric surface with a torus $T$, and assume that we have equipped all line bundles appearing in the integral \eqref{EqIntegralRepost} with an equivariant structure. Then, by applying the Atiyah-Bott localization formula, we obtain
\begin{align}\label{EqIntComp}
Q_n(S,\beta)
	& =  \sum_{F\in (\Hilb^n(S))^T} \int \frac{\prod_{i=1}^s e\left(\left(\left.R\pi_*\O(\beta_i) - \RHom_\pi(\I^{[n_{i-1}]},\I^{[n_i]}(\beta_i))\right)\right |_F\right)}{e\left(\left.N_{L,0}^{[n]}\right |_F\right)e(T_{\Hilb^n(S),F})} \notag \\
	& = \sum_{F\in (\Hilb^n(S))^T} \int e\left(\left.- T_{L,0}^{[n]}\right|_F\right)
\end{align}
in which $e()$ denotes the equivariant Euler class for the torus $T\times \CC^*$.
\begin{remark}
In the factor
\[e\left(\left(\left.R\pi_*\O(\beta_i) - \RHom_\pi(\I^{[n_{i-1}]},\I^{[n_i]}(\beta_i))\right)\right |_F\right)\]
in the formula above, the Euler class $e(\,)$ should a priori be the $T$-equivariant Chern class $c^T_{n_{i-1} + n_i}(\,)$, but by \cite[Lemma 6]{CO} and Lemma \ref{LemLocRHom} below, the $K$-theory class
\[\left(\left.R\pi_*\O(\beta_i) - \RHom_\pi(\I^{[n_{i-1}]},\I^{[n_i]}(\beta_i))\right)\right |_F \in K_0^T(F)\]
can be represented by an honest $n_{i-1}+n_i$-dimensional representation of the torus $T$. It follows that the $T$-equivariant top Chern class agrees with $T$-equivariant Euler class.
\end{remark}
\begin{remark}
The compact form of the expression \ref{EqIntComp} is due to the fact that it is obtained by applying the Atiyah-Bott localization formula twice. The virtual version of the formula, due to Graber and Pandharipande \cite{GP}, expresses by definition the contributions \eqref{EqContVW} of nested Hilbert schemes $S_\beta^{[n]}$ to the monopole branch of the Vafa-Witten invariant for a surface $S$ with $p_g(S)>0$. The second time, however, we applied the formula to  Hilbert schemes of points on a toric surface.
\end{remark}

Similarly, we have
\begin{align*}
Q_n(S,\beta,y)
	& =
	\sum_{F\in (\Hilb^n(S))^T} \int
		\frac{\ch\left((K_{L,0}^{[n]})^\frac{1}{2}|_F\right)}{\ch\left(\Lambda^\bullet(N_{L,0}^{[n]\vee}|_F)\right)}\Td\left((T_{L,0}^{[n]})^{\CC^*}|_F\right) \\
	& \quad \times	
	 \frac{\prod_{i=1}^s e\left(\left(\left.R\pi_*\O(\beta_i) - \RHom_\pi(\I^{[n_{i-1}]},\I^{[n_i]}(\beta_i))\right)\right |_F\right)}{e(T_{\Hilb^n(S),F})} \\
	& = \sum_{F\in (\Hilb^n(S))^T} \int  \frac{\ch\left((K_{L,0}^{[n]})^\frac{1}{2}|_F\right)}{\ch\left(\Lambda^\bullet(T_{L,0}^{[n]\vee}|_F)\right)} \,,
\end{align*}
where $\ch$ and $\Td$ denote the $T\times\CC^*$-equivariant Chern character and Todd class respectively. We have suppressed that by convention, we denote the Chern character of $\t$ by $y = \ch(\t)$. Finally, note that the second equation follows from the identity
\[\ch(\Lambda^\bullet(L^*)) = 1- \exp (- \alpha) = \frac{e(L)}{\Td(L)}\]
for any 1-dimensional $T$-representation $L$ with $c_1(L) = \alpha$.

Let $F \in \Hilb^n(S)$ be a $T$-fixed point. Let $0\leq i,j \leq s$, and write
\[I = \I^{[n_i]}_F \quad \text{and} \quad J = \I^{[n_j]}_F\,.\]
The class $\left. T_{L,0}^{[n]}\right|_F$
is a linear combination of classes of the form
\[\Big(R\pi_*M - \RHom_\pi(\I^{[n_{i}]},\I^{[n_j]}\otimes M)\Big)\Big |_F = \chi(M) - R\Hom_S(I,J\otimes M)\,,\]
where $M$ is a $T\times\CC^*$-equivariant line bundle on $S$.
\begin{lemma}\label{LemLocRHom}
Let $\{U_\sigma\}_{\sigma=1,\ldots,e(S)}$ be the maximal open cover of $S$ by affine $T$-fixed subsets, cf.\ \cite[Section 4]{GK17}. Then we have
\begin{equation}\label{EqRhom}
\chi(M) - R\Hom_S(I,J\otimes M) = \sum_{\sigma = 1}^{e(S)} \Gamma(U_\sigma, M) - R\Hom_{U_\sigma}(I|_{U_\sigma},J|_{U_\sigma}\otimes M|_{U_\sigma})\,.
\end{equation}
\end{lemma}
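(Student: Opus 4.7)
The plan is to reduce the global identity to a purely local one by exploiting the fact that the $T$-fixed monomial ideal sheaves $I$ and $J$ have zero-dimensional cosupports $Z_I = V(I)$ and $Z_J = V(J)$ that are concentrated at the torus-fixed points of $S$, each of which lies in a unique chart $U_\sigma$.

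First, I would rewrite the left-hand side using the standard short exact sequences
\[
0 \to I \to \O_S \to \O_{Z_I} \to 0
\quad\text{and}\quad
0 \to J\otimes M \to M \to \O_{Z_J}\otimes M \to 0.
\]
Applying $R\Hom_S(-,J\otimes M)$ to the first, and additivity of $\chi$ to the second, a bookkeeping manipulation yields
\[
\chi(M) - R\Hom_S(I, J\otimes M) \;=\; \chi(\O_{Z_J}\otimes M) \;+\; R\Hom_S(\O_{Z_I}, J\otimes M).
\]
Both terms on the right are now manifestly supported on the zero-dimensional schemes $Z_I \cup Z_J$, and in particular at torus-fixed points of $S$.

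Second, since each torus-fixed point lies in a unique $U_\sigma$, I would use the canonical splittings $\O_{Z_I} = \bigoplus_\sigma \O_{Z_I\cap U_\sigma}$ and $\O_{Z_J} = \bigoplus_\sigma \O_{Z_J\cap U_\sigma}$; as each summand is supported inside the affine open $U_\sigma$, the global $R\Hom_S$ agrees with the local $R\Hom_{U_\sigma}$. Then I would reverse the SES argument chart by chart: on the affine $U_\sigma$ we have $R\Gamma(U_\sigma,-) = \Gamma(U_\sigma,-)$, and running exactly the manipulation of step one in the other direction identifies the local contribution at $\sigma$ with
\[
\Gamma(U_\sigma,M) - R\Hom_{U_\sigma}\bigl(I|_{U_\sigma},J|_{U_\sigma}\otimes M|_{U_\sigma}\bigr).
\]
Summing over $\sigma$ gives \eqref{EqRhom}.

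The only subtle point is that each summand on the right-hand side of the claim is an \emph{infinite-dimensional} $T\times\CC^*$-representation (since $U_\sigma\cong \mathbb{A}^2$ is affine, $\Gamma(U_\sigma,M)$ already has infinitely many weights), whereas the left-hand side is a genuine finite-dimensional virtual representation. The identity therefore has to be interpreted in the completion of $K_0^{T\times\CC^*}(\mathrm{pt})$ consisting of formal characters with finite-dimensional weight spaces; once set up this way, the cancellation of the infinite tails from different charts $U_\sigma$ happens automatically from the construction, because after regrouping the contributions as in step one each tail comes from $\Gamma(U_\sigma, J|_{U_\sigma}\otimes M|_{U_\sigma})$ minus itself. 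This is the only bookkeeping obstacle; everything else is formal.
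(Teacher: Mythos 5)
Your proposal is correct, and the bookkeeping checks out: the two short exact sequences give
\[
\chi(M) - R\Hom_S(I,J\otimes M) \;=\; \chi(\O_{Z_J}\otimes M) + R\Hom_S(\O_{Z_I}, J\otimes M)\,,
\]
both terms supported at the torus-fixed points, and the same manipulation run backwards on each affine chart (where higher cohomology vanishes) produces the chart-wise summands; the decomposition over $\sigma$ is legitimate because each fixed point lies in a unique maximal chart by the orbit--cone correspondence. However, your route is genuinely different from the paper's. The paper does not pass through the structure-sheaf sequences at all: it observes that on every double (and triple) overlap $U_{\sigma\tau}$ the sheaves $I$ and $J$ restrict to $\O$, since the overlaps contain no fixed points, so $\Gamma(U_{\sigma\tau},\mathcal{E}xt^i(I,J\otimes M)) = \Gamma(U_{\sigma\tau},\mathcal{E}xt^i(\O,M))$; it then compares the \v{C}ech complexes computing $R\Hom_S(I,J\otimes M)$ and $\chi(M)$ via the local-to-global spectral sequence, so that all overlap terms cancel and only the single-chart contributions survive. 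Your argument buys a more concrete justification of why the difference of two infinite-dimensional characters on each chart is finite --- the residual terms $\Gamma(U_\sigma,\O_{Z_J}\otimes M)$ and $R\Hom_{U_\sigma}(\O_{Z_I\cap U_\sigma}, J|_{U_\sigma}\otimes M|_{U_\sigma})$ are visibly finite-length --- whereas the \v{C}ech argument handles the cancellation of the infinite tails more uniformly (they cancel against the overlap terms of the complex for $\chi(M)$) and generalizes more directly to the threefold vertex formalism of MNOP that the paper cites. Both proofs rest on the same geometric input, namely that the cosupports of $I$ and $J$ are concentrated at the fixed points, one per chart.
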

\begin{proof}
Write $U_{\sigma\tau} = U_\sigma\cap U_\tau$ for $\sigma < \tau$. Since $I$ and $J$ are ideal sheaves of $\CC^*$-fixed $0$-dimensional subschemes of $S$, and $U_{\sigma\tau}$ does not contain any fixed points, we have
\begin{align*}
\Gamma(U_{\sigma\tau}, \mathcal{E}xt^i(I,J\otimes M))
	& = \Gamma(U_\sigma \cap U_\tau, \mathcal{E}xt^i(I|_{U_{\sigma\tau}},(J\otimes M)|_{U_{\sigma\tau}}))\\
	& = \Gamma(U_{\sigma\tau}, \mathcal{E}xt^i(\O,M))
\end{align*}
for any $i$ , and a similarly for intersections $U_\sigma \cap U_\tau \cap U_\upsilon$.
Now use the local-to-global spectral sequence and the \v{C}ech complex for the covering $\{U_\alpha\}$ (cf.\ \cite[Section 4.6]{MNOP}), to compare the classes $\chi(M)$ and $R\Hom_S(I,J\otimes M)$.
\end{proof}

Now \cite[Lemma 6]{CO}, and also the proof of \cite[Proposion 4.1]{GK17}, give an explicit expression for the right-hand side of \eqref{EqRhom}. This allows us to compute the integrals $Q_n(S,\beta)$ and $Q_n(S,\beta,y)$. We have implemented the computation in Sage \cite{Sage} for $S= \PP^1\times \PP^1$ and $S = \PP^2$ and for any $\beta$, $n$ and rank. Part of the results for rank $3$ are listed in Appendix \ref{AppPow}.

\section{The universal series $A$}\label{SecProofC}
Fix K3 surface $S$ and non-negative integers $n=(n_0,\ldots,n_s)$. Consider the inclusion
\[ i \colon S^{[n]} \hookrightarrow \Hilb^n(S) = S^{[n_0]} \times \cdots \times S^{[n_s]}\]
of the nested Hilbert scheme. In the case that $n = (k, \ldots, k)$ for some non-negative integer $k$, we write
\[\Delta_{S^{[k]}\times \cdots \times S^{[k]}} \cong S^{[k]} \subset S^{[k]}\times \cdots \times S^{[k]}\]
for the diagonal.

\begin{lemma} \label{LemVanK3}
\[ i_*[S^{[n]}]^\vir =
\begin{cases}
[\Delta_{S^{[k]}\times \cdots \times S^{[k]}}]
	& \text{if } n = (k,\ldots,k)\\
0
	& \text{else}
\end{cases}
\]
\end{lemma}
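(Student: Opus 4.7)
The plan is to apply Theorem~\ref{TheComp} to factorize $i_*[S^{[n]}]^\vir$ into a product of rank-two Carlsson--Okounkov-type Euler classes and then analyze each factor separately, using $\omega_S \cong \O_S$. Since $K_S = 0$, equation~\eqref{EqSWKS} gives $\SW(0) = (-1)^{\chi(\O_S)} = 1$, and $|\O_S(0)|$ is a single point. The $\beta = 0$ case of Theorem~\ref{TheComp} therefore collapses to
\[
i_*[S^{[n]}]^\vir = \prod_{i=1}^{r-1} e_i, \qquad e_i \coloneqq e\bigl(R\pi_*\O_S - \RHom_\pi(\I^{[n_{i-1}]}, \I^{[n_i]})\bigr),
\]
in $A_*\bigl(S^{[n_0]} \times \cdots \times S^{[n_{r-1}]}\bigr)$, with each $e_i$ pulled back from $S^{[n_{i-1}]} \times S^{[n_i]}$. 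Applying Theorem~\ref{TheComp} a second time, now in the rank-two case with $\beta=0$, identifies each factor as $e_i = i_*[S^{[n_{i-1}, n_i]}_0]^\vir$.

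To prove the vanishing when some $n_{i-1} \neq n_i$, I will show that $e_i$ vanishes at any such index. If $n_{i-1} < n_i$ the nested Hilbert scheme $S^{[n_{i-1}, n_i]}_0$ is empty---the incidence $Z_i \subset Z_{i-1}$ forces $|Z_i| \leq |Z_{i-1}|$---so $e_i = 0$ directly. If $n_{i-1} > n_i$ I will recycle the Serre-duality chain already carried out in~\eqref{EqFactor}; because $\omega_S = \O_S$ on a K3 surface, it rewrites $e_i = \pm\, i_*[S^{[n_i, n_{i-1}]}_0]^\vir$, and the same emptiness argument applies after swapping the roles of $n_{i-1}$ and $n_i$.

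For the diagonal case $n = (k,\ldots,k)$, the condition $Z_i \subset Z_{i-1}$ with $|Z_i| = |Z_{i-1}| = k$ forces $Z_i = Z_{i-1}$, so $S^{[k,k]}_0$ is isomorphic as a scheme to $S^{[k]}$ and embeds as the pairwise diagonal $\Delta_i \subset S^{[k]} \times S^{[k]}$; it is in particular smooth of dimension $2k$. Theorem~\ref{TheComp} places the virtual class of $S^{[k,k]}_0$ in $A_{n_0+n_s} = A_{2k}$, matching the actual dimension, so on the smooth target the obstruction sheaf has rank zero and the virtual class reduces to the fundamental class $[\Delta_i]$. The product $\prod_{i=1}^{r-1}[\Delta_i]$ inside $(S^{[k]})^r$ is then a transverse intersection: each $\Delta_i$ is cut out by the $2k$ equations $x_{i-1} = x_i$, and as $i$ ranges over $1,\ldots,r-1$ these conditions carve out the long diagonal with total codimension $2k(r-1)$, yielding $[\Delta_{S^{[k]} \times \cdots \times S^{[k]}}]$.

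The main obstacle I anticipate is pinning down the identification $[S^{[k,k]}_0]^\vir = [\Delta_i]$ with coefficient exactly one rather than some integer multiple. This hinges on two points: that $S^{[k,k]}_0$ is the diagonal scheme-theoretically (not merely set-theoretically), and that the perfect obstruction theory has virtual dimension equal to actual dimension on the smooth locus, so the obstruction sheaf is forced to have rank zero. Both ultimately rely on the K3 hypothesis $\omega_S = \O_S$ through Theorem~\ref{TheComp}, and verifying them carefully is where the argument is most delicate.
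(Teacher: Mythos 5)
Your proof is correct, and for the vanishing case it is essentially the paper's own argument: both rest on Theorem~\ref{TheComp} together with the Serre-duality flip of \eqref{EqFactor} (using $\omega_S\cong\O_S$) and the emptiness of a reversed nested Hilbert scheme. The only difference there is that you flip one factor at a time, whereas the paper reverses the whole tuple at once, obtaining $i_*[S^{[n_0,\dots,n_s]}]^\vir=(-1)^{n_0+n_s}j_*[S^{[n_s,\dots,n_0]}]^\vir$ and noting that one of the two schemes is empty unless all $n_i$ coincide; the two versions are interchangeable. For the diagonal case your route genuinely differs: the paper simply invokes \cite[Theorem~2]{GSY}, which identifies the perfect obstruction theory on $S^{[k,\dots,k]}\cong S^{[k]}$ with the cotangent bundle, so the virtual class is the honest fundamental class and pushes forward to the diagonal. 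You instead reapply Theorem~\ref{TheComp} to see each factor as $i_*[S^{[k,k]}]^\vir$, identify that with the pairwise diagonal, and then intersect the partial diagonals $\{x_{i-1}=x_i\}$ transversally inside $(S^{[k]})^{r}$; that computation is fine. The one step to tighten is the multiplicity: smoothness plus ``virtual dimension equals actual dimension'' only gives that the obstruction sheaf has rank zero, i.e.\ is torsion on the irreducible space $S^{[k]}$; to get $[S^{[k,k]}]^\vir=[\Delta]$ with coefficient one you should add that a torsion obstruction sheaf is generically zero, so the cone fills out the whole obstruction bundle by closedness and density --- or, more simply, quote the same \cite{GSY} statement the paper uses, which kills the obstruction sheaf outright and would also let you treat the $r$-step diagonal in one stroke without the transversality bookkeeping. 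With that point made explicit, your argument is a valid, slightly more self-contained alternative built entirely on Theorem~\ref{TheComp}.
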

\begin{proof}
For the first case, see \cite[Theorem 2]{GSY} (note that in this case we have
\[S^{[n]} = S^{[k,\ldots,k]} \cong \Delta_{S^{[k]}\times \cdots \times S^{[k]}}\cong S^{[k]}\,,\]
and the perfect obstruction theory is just the cotangent bundle). For the second case, 
note that we have by Theorem~\ref{TheComp} and Serre duality:
\begin{align*}
i_*[S^{[n_0,\ldots,n_s]}]^\vir
	& = \prod_i e( \chi(\O_S) - \RHom_\pi(\I^{[n_{i-1}]}, \I^{[n_i]})) \cap [\Hilb^n(S)]\\
 	& = (-1)^{n_0 + n_{s}} \prod_i e( \chi(\O_S) - \RHom_\pi(\I^{[n_{i}]}, \I^{[n_{i-1}]})) \cap [\Hilb^n(S)] \\
	& = (-1)^{n_0 + n_{s}} j_*[S^{[n_s,\ldots,n_0]}]^\vir \,,
\end{align*}
where $j$ is the inclusion
\[j\colon S^{[n_s,\ldots,n_0]} \hookrightarrow \Hilb^n(S)\,.\]
Now note that $S^{[n_0,\dots,n_s]}$ or $S^{[n_s,\dots,n_0]}$ is empty, unless $n_0 = \ldots = n_s$.
\end{proof}

Set $\beta = (0,\ldots,0)$, so we have $\beta^i = K_S - \beta_i = 0$ for $i=1,\ldots,s$. By Proposition~\ref{PropUnivUnref} we have
\[\sum_n Q_n(S,\beta) \, q^n = \left(A^{(s+1)}_{\underline{\chi(\O_S)}}\right)^2\,.\]
Recall that $Q_n(S,\beta)$ is by definition integral over the virtual class of $S_\beta^{[n]} = S^{[n]}$. Hence, by Lemma \ref{LemVanK3}, we have $Q_n(S,\beta) = 0$ or $n_0 = \ldots = n_s$. Assume that we have $n = (k,\ldots,k)$ for a non-negative integer $k$. We will compute $Q_n(S,\beta)$.

We let $L = (\O_S,\ldots , \O_S)$ the $(s+1)$-tuple of copies of the trivial line on $S$, so we have $\beta = \beta(L)$. We write
\begin{align*}
E^{[n]} \coloneqq E_L^{[n]} = \pr_0^* \I^{[k]} \otimes \t^0 \oplus \ldots \oplus \pr_s^* \I^{[k]} \otimes \t^{-s}\,
\end{align*}
for the sheaf on $\Hilb^n(S)\times S$, where $\pr_i$ denotes the $i$th projection
\[\pr_i\colon \Hilb^n(S) = S^{[k]} \times \cdots \times S^{[k]} \rightarrow S^{[k]}\,,\]
and its base change to $S$.
We will write $\Delta\colon S^{[k]} \rightarrow S^{[k]} \times \cdots \times S^{[k]}$ for the diagonal embedding (which of course can be identified with the embedding $i$), and denote its base change to $S$ by the same symbol. We have an isomorphism
\[ \Delta^* E^{[n]} \cong \I^{[k]} \otimes \t^0 \oplus \ldots \oplus \I^{[k]} \otimes \t^{-s}\]
of sheaves on $S^{[k]} \times S$. Write
\[E = (\t^0\oplus \ldots\oplus \t^{-s})\otimes \O_S\]
for the vector bundle on $S$. Using the notation Section~\ref{SecTrace}, we have the following equality in the $\CC^*$-equivariant K-group of $S^{[k]}$:
\begin{align}
\Delta^*T_{L,0}^{[n]}
\notag			& = \RHom_\pi(\Delta^*E^{[n]},\Delta^*E^{[n]} \otimes \t)_0 - \RHom_\pi(\Delta^*E^{[n]},\Delta^*E^{[n]})_0 \\
\notag			& \quad - \left(R\Hom(E,E \otimes \t)_0 - R\Hom(E,E)_0 \right) \\
\notag			& = \sum_{i=1}^{s+1} \RHom_\pi(\I^{[k]},\I^{[k]})_0 \otimes \t^i +  \sum_{i=0}^{s} - \RHom_\pi(\I^{[k]},\I^{[k]})_0 \otimes \t^{-i} \\
\label{EqNoCont}	& = T_{S^{[k]}} + \sum_{i=1}^s \left( T_{S^{[k]}} \otimes \t^{-i} - T_{S^{[k]}}\otimes \t^i\right) - T_{S^{[k]}} \otimes \t^{s+1}\,.
\end{align}
Note that $T_{S^{[k]}}$ has even rank, so we have
\[e(T_{S^{[k]}} \otimes \t^{-i}) = e(T_{S^{[k]}} \otimes \t^{i})\,,\]
and hence
\begin{align*}
e\left(\sum_{i=1}^s \left( T_{S^{[k]}} \otimes \t^{-i} - T_{S^{[k]}}\otimes \t^i\right)\right)
	& = \prod_{i=1}^s  \frac{e\left(T_{S^{[k]}} \otimes \t^{-i} \right)} {e\left(T_{S^{[k]}}\otimes \t^i\right)} \\
	& = 1
\end{align*}
It follows that we have
\begin{align*}
Q_n(S,\beta)
	& = \int_{[S_\beta^{[n]}]^\vir} \frac{1}{e(N_{L,0}^{[n]})} \\
	& = \int_{S^{[k]}}  \frac{1}{e\left(\Delta^*\left(T_{L,0}^{[n]} \right)^\mathrm{mov} \right)} \\
	& = \int_{S^{[k]}} e(T_{S^{[k]}} \otimes \t^{s+1}) \\
	& = \int_{S^{[k]}} e(T_{S^{[k]}}) \\
	& = e(S^{[k]})\,.
\end{align*}
By G\"ottsche's formula \cite{Go90}, we have
\begin{align*}
\left(A^{(s+1)}_{\underline{\chi(\O_S)}} \right)^2
	& = \sum_{n\in(\ZZ_{\geq0})^{s+1}} Q_n(S,\beta) \, q^{|n|} \\
	& = \sum_{k\in \ZZ_{\geq0}} e(S^{[k]}) \, q^{k(s+1)} \\
	& = \left(\prod_{k \geq 1} \frac{1}{1-q^{k(s+1)}}\right)^{24}\,.
\end{align*}
It follows that
\[ F_0^{(s+1)} A^{(s+1)}_{\underline{\chi(\O_S)}} (-1)^{s+1} = \frac{1}{s+1}  \left(\prod_{k \geq 1} \frac{1}{1-q^{k(s+1)}}\right)^{12}\,,\]
which is the specialization of the right-hand side of \eqref{EqK3GenSer} at $y=1$, as expected.

Now consider the integral
\[
Q_n(S,\beta,y) = \Bigg[\int_{[S_\beta^{[n]}]^\vir} \frac{\ch\left((K_{L,0}^{[n]})^{\frac{1}{2}}\right)}{\ch(\Lambda^\bullet((N_{L,0}^{[n]})^\vee))}\Td\left((T_{L,0}^{[n]})^{\CC^*}\right)\Bigg]_{\ch(\t) = y}\,.
\]
Since $T_{S^{[k]}}$ is self-dual and has even rank, we have by \cite[equation 2.28]{T} the following equation in the $\CC^*$-equivariant $K$-group on $S^{[k]}$:
\begin{align*}
\frac{\left(\det(T_{S^{[k]}} \otimes \t^{-i} - T_{S^{[k]}}\otimes \t^i)^\vee\right)^\frac{1}{2}}{\Lambda^\bullet ((T_{S^{[k]}} \otimes \t^{-i} - T_{S^{[k]}}\otimes \t^i)^\vee)}
	& = \frac{\det(T_{S^{[k]}}^* \otimes \t^{i}) \otimes \Lambda^\bullet(T_{S^{[k]}}^*\otimes \t^{-i})}{\Lambda^\bullet (T_{S^{[k]}}^* \otimes \t^{i})} \\
	& = 1\,.
\end{align*}
It follows that, as above, the middle terms of \eqref{EqNoCont} do not contribute to $Q_n(S,\beta,y)$.
\begin{remark}
Note that taking the square root involves a choice. First of all, our choice here is consistent with the one we made before. More importantly, after choosing a root $\sqrt \t$, the choice is unique up to $2$-torsion in $\Pic(S^{[n]}_\beta)$, which is killed by $\ch$.
\end{remark}

Since $T_{S^{[k]}}$ is self-dual, so is $K_{S^{[k]}}$, and hence $\ch(K_{S^{[k]}}) = 1$. Writing $r=s+1$, we find
\begin{align*}
Q_n(S,\beta,y)
	& = \Bigg[\int_{S^{[k]}} \frac{\ch\left(\left(\det(T_{S^{[k]}} - T_{S^{[k]}} \otimes \t^{r})^\vee\right)^\frac{1}{2}\right)}{\ch(\Lambda^\bullet ( - T_{S^{[k]}} \otimes \t^{r})^\vee)}\Td(T_{S^{[k]}})\Bigg]_{\ch(\t) = y} \\
	& = \Bigg[\int_{S^{[k]}} \ch\left(K_{S^{[k]}} \otimes \t^{rk}\right) \ch(\Lambda^\bullet(T_{S^{[k]}} \otimes \t^{-r})) \Td(T_{S^{[k]}})\Bigg]_{\ch(\t) = y} \\
	& = y^{rk} \sum_{i=0}^{2k} (-1)^i y^{-ri}  \int_{S^{[k]}} \ch( \Lambda^i T_{S^{[k]}})  \Td(T_{S^{[k]}})  \\
	& = y^{rk} \sum_{i=0}^{2k} (-1)^i y^{-ri}  \chi( \Lambda^i T_{S^{[k]}})  \\
	& \eqqcolon \chi_{-y^{r}}(S^{[k]})\,.
\end{align*}
The generating series of $\chi_y$-genera of the Hilbert schemes $S^{[k]}$ has been computed in \cite{GS}. It follows that we have
\begin{align*}
\left(A^{(r)}_{\underline{\chi(\O_S)}}(y)\right)^2
	& = \sum_{n\in(\ZZ_{\geq0})^{r}} Q_n(S,\beta,y) \, q^{|n|} \\
	& = \sum_{k\in\ZZ_{\geq 0}} \chi_{-y^{r}}(S^{[k]}) q^{rk}\\
	& = \prod_{k\geq 1} \frac{1}{(1-q^{rk})^{20}(1-y^{-r}q^{rk})^2 (1 - y^r q^{rk})^2}\,.
\end{align*}
We conclude
\begin{align*}
A^{(r)}(y)
	& = F_0^{(r)}(y) A^{(r)}_{\underline{\chi(\O_S)}}(y) (-1)^{r} \\
	& = \frac{1}{[r]_y}   \prod_{k\geq 1} \frac{1}{(1-q^{rk})^{10}(1-y^{-r}q^{rk}) (1 - y^r q^{rk})} \\
	& = \frac{y^\frac{1}{2} - y^{-\frac{1}{2}}}{\phi_{-2,1}(q^r,y^r)^\frac{1}{2}\, \tilde\Delta(q^r)^\frac{1}{2}} \,,
\end{align*}
proving Theorem~\ref{Theorem4}.

\section{Smooth components}
In the case that the monopole branch of the moduli space of $\CC^*$-fixed Higgs pairs is smooth, there is a direct method to compute the Vafa-Witten invariants. Let $S$ be a surface with $H^1(\O_S) = 0$, $p_g>0$, and assume that $\Pic(S)$ is generated by a smooth very ample canonical curve $C$. In this case, the only Seiberg-Witten basic classes of $S$ are $0$ and $K_S$. For rank $2$, the monopole branch \[\M_{1^2}=\M_{1^2,K_S,c_2}\subset (\N_{2,\omega_S,c_2}^\bot )^{\CC^*}\]
is smooth precisely when $c_2 = 0,1,2,3$ \cite{TT}. In particular, the virtual class is given by the Euler class of the obstruction bundle and the Vafa-Witten invariants can be computed using the intersection theory of (smooth) nested Hilbert schemes of points on the surface and the smooth canonical curve. This method, which is carried out in \cite{TT} (unrefined) and \cite{T} (refined), can be generalized to rank $3$, but only for $c_2 = 0,1,2$. We have done the computation in this setting, and have found that they confirm our results (see the discussion after Theorem \ref{Theorem3}).

Let $(E,\phi)\in \M_{1^3,K_S,c_2}$ be a Higgs pair, so $E$ can be written as
\[E = I_0\otimes \omega_S^a \oplus I_1 \otimes \omega_S^b \oplus I_2 \otimes \omega_S^c\]
where $I_i\in S^{[n_i]}$ for $i = 0,1,2$ and $a,b,c\in \ZZ$ such that $a+b+c = 1$. Moreover, we have $\phi = (\phi_1,\phi_2)$ for non-zero homomorphisms
\begin{align*}
\phi_1\colon I_0\otimes \omega_S^a
	& \rightarrow I_1 \otimes \omega_S^{b+1}\,,\\
\phi_2\colon I_1\otimes \omega_S^b
	& \rightarrow I_2 \otimes \omega_S^{c+1}\,.
\end{align*}

\begin{lemma} \label{LemBasic}
We have $(a,b,c) = (1,0,0)$.
\end{lemma}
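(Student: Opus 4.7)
The plan is to combine numerical constraints from two independent sources: the existence of nonzero Higgs field components $\phi_1,\phi_2$, and the Gieseker stability of $(E,\phi)$. Together with the determinant condition $a+b+c=1$ coming from $\det E=K_S$, these will pin down $(a,b,c)$ uniquely.

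First I would extract constraints from the Higgs maps. A nonzero map $\phi_1\colon I_0\otimes\omega_S^a\to I_1\otimes\omega_S^{b+1}$, composed with the canonical injection $I_1\otimes\omega_S^{b+1}\hookrightarrow\omega_S^{b+1}$, yields a nonzero element of $\Hom(I_0,\omega_S^{b+1-a})$; this composition is nonzero precisely because the second map is injective and $\phi_1\neq 0$. Using the structure sequence $0\to I_0\to\O_S\to\O_{Z_0}\to 0$ together with the vanishings $\Hom(\O_{Z_0},L)=0=\Ext^1(\O_{Z_0},L)$ for a zero-dimensional $Z_0$ on a smooth surface and a line bundle $L$, this Hom group is identified with $H^0(\omega_S^{b+1-a})$. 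Because $\Pic(S)=\ZZ\cdot[C]$ with $[C]=K_S$ and $C$ effective, nonvanishing forces $b-a+1\geq 0$. The identical argument for $\phi_2$ gives $c-b+1\geq 0$.

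Next I would invoke stability. The subsheaves $E_2=I_2\otimes\omega_S^c$ and $E_1\oplus E_2=I_1\otimes\omega_S^b\oplus I_2\otimes\omega_S^c$, of ranks $1$ and $2$, are both proper and $\phi$-invariant (since there is no $\phi_3$, and $\phi_2(E_1)\subset E_2\otimes\omega_S$). Gieseker stability of $(E,\phi)$ with respect to the essentially unique polarization $C$ implies slope semistability, yielding $cK_S^2\leq\tfrac{1}{3}K_S^2$ and $(b+c)K_S^2\leq\tfrac{2}{3}K_S^2$. Since $K_S^2>0$ and $a,b,c\in\ZZ$, integrality sharpens these to $c\leq 0$ and $b+c\leq 0$, the latter giving $a\geq 1$ via $a+b+c=1$.

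Finally I would assemble the pieces. The inequalities $a\geq 1$, $b\geq a-1$, $c\leq 0$, $c\geq b-1$, together with $a+b+c=1$, leave essentially no room: if $a\geq 2$ then $b\geq 1$ while $c=1-a-b\leq -2<b-1$, contradicting $c\geq b-1$; hence $a=1$, forcing $b+c=0$, and $c=-b\geq b-1$ combined with $b\geq 0$ and integrality forces $b=0=c$. The only mildly technical step is the first one — translating $\phi_i\neq 0$ into effectivity of a power of $K_S$ — but this is routine on a surface of Picard rank one.
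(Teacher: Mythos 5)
Your proof is correct and follows essentially the same route as the paper's: the paper likewise derives $a\leq b+1$ and $b\leq c+1$ from the nonvanishing of $\phi_1,\phi_2$, derives $c\leq\tfrac13$ and $\tfrac{b+c}{2}\leq\tfrac13$ from slope semistability of the $\phi$-invariant subsheaves, and combines these with $a+b+c=1$. You merely spell out in more detail the identification $\Hom(I_0,L)\cong H^0(L)$ and the final integer arithmetic, which the paper leaves as "easy to see."
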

\begin{proof}
Slope semistability of $E$ implies that
\[c \leq \frac{1}{3}\,, \quad \frac{b+c}{2} \leq \frac{1}{3}\,.\]
On the other hand, by the existence of the maps $\phi_1$ and $\phi_2$ we have
\[a\leq b+1 \,, \quad b \leq c+1\,.\]
It is easy to see that the only integral solution to these inequalities together with $a+b+c=1$ is $(a,b,c) = (1,0,0)$.
\end{proof}

\begin{proposition}\label{PropSmoothMod}
Let $S$ be given as above. Then $\M_{1^3,K_S,c_2}$ is smooth if and only if $c_2\leq 2$. In particular, we have
\begin{align*}
\M_{1^3,K_S,1}
	& \cong \left(S^{[1]} \times |K_S|\right) \sqcup
			\C\,;\\
\M_{1^3,K_S,2}
	& \cong \left(S^{[2]} \times |K_S|\right) \sqcup
			\left(S^{[1]} \times |K_S|\right) \sqcup
			\left(S^{[1]}\times \C\right) \sqcup
			\C_{|K_S|}^{[2]}
\end{align*}
in which
\[
\begin{tikzcd}
\C \arrow[r]\arrow[d] &  S \\
\vert K_S \vert &
\end{tikzcd}
\]
is the universal canonical curve, and $\C_{|K_S|}^{[2]} \rightarrow |K_S|$ the relative Hilbert scheme of pairs of points. 
\end{proposition}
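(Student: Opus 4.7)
The plan is to enumerate the connected components of $\M_{1^3,K_S,c_2}$ using the nested Hilbert scheme description from Proposition~\ref{PropMod} and to check smoothness case by case. First I would fix the data. By Lemma~\ref{LemBasic}, any Higgs pair in $\M_{1^3,K_S,c_2}$ has weight line bundles $L = (\omega_S, \O_S, \O_S)$, so by Notation~\ref{Notation} one has $\beta = (\beta_1, \beta_2) = (0, K_S)$, and Lemma~\ref{LemmaChern} collapses to $c_2 = n_0 + n_1 + n_2$ since all cross terms $c_1(L_i)c_1(L_j)$ vanish. By Proposition~\ref{PropMod}, the components of $\M_{1^3, K_S, c_2}$ are thus indexed by triples $n \in \ZZ_{\geq 0}^3$ with $|n| = c_2$ for which the nested Hilbert scheme
\[
S_\beta^{[n]} = \{(I_0, I_1, I_2, C_2) \mid I_0 \subset I_1,\ I_1(-C_2) \subset I_2\} \subset S^{[n_0]} \times S^{[n_1]} \times S^{[n_2]} \times |K_S|
\]
is nonempty (the $C_1$ factor disappears because $\beta_1 = 0$).

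Next I would enumerate the nonempty strata for $|n| \leq 2$. The inclusion $I_0 \subset I_1$ forces $n_0 \geq n_1$, so a direct case analysis yields: for $c_2 = 1$ the triples $(1,0,0)$ (giving $S \times |K_S|$) and $(0,0,1)$, where $\O_S(-C_2) \subset I_p$ forces $p \in C_2$ and one recovers $\C$; for $c_2 = 2$ the four triples $(2,0,0), (1,1,0), (1,0,1), (0,0,2)$ giving the pieces listed in the statement. Here I would use that $I_0 \subset I_1$ with $n_0 = n_1 = 1$ forces $I_0 = I_1$ and that $\O_S(-C_2) \subset I_Z$ for $Z$ of length two is equivalent to $Z \subset C_2$ as closed subschemes. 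Smoothness then follows from standard facts: reduced diagonals in $S^{[n]} \times S^{[n]}$ are smooth; the universal curve $\C \subset S \times |K_S|$ is smooth because $|K_S|$ is base-point free; and the relative Hilbert scheme $\C^{[2]}_{|K_S|}$, being the relative Hilbert scheme of points on a family of curves with smooth total space, is smooth.

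For the converse, I would take $n = (c_2 - 2, 1, 1)$ with $c_2 \geq 3$ and exhibit $S_\beta^{[n]}$ as reducible, which immediately yields non-smoothness at the intersection of its components. Once the first condition forces $I_1 = I_q \supset I_0$, the remaining condition $I_q(-C_2) \subset I_p$ admits a local-at-$p$ dichotomy: if $p \neq q$ it reduces to $p \in C_2$, while if $p = q$ it is automatic since $\mathfrak{m}_p \cdot f \subset \mathfrak{m}_p$ for any local equation $f$ of $C_2$. Thus $S_\beta^{[(c_2-2,1,1)]}$ is the union of two distinct irreducible closed subschemes---a diagonal-type piece $\{p = q\}$ and an incidence-type piece $\{p \in C_2\}$---meeting along the nonempty locus $\{p = q \in C_2\}$, where the Zariski tangent space strictly exceeds the local dimension. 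The hard part will be the local scheme-theoretic verification that this dichotomy really presents $S_\beta^{[(c_2-2,1,1)]}$ as an honest union of two reduced components (rather than a thickened or embedded structure that might coincidentally be smooth), which reduces to an explicit computation in local coordinates at points $(q,q,C_2)$ with $q \in C_2$, exploiting base-point freeness of $|K_S|$ to write down a local equation for $C_2$.
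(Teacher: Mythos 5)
Your proposal follows essentially the same route as the paper: reduce to $L = (\omega_S, \O_S, \O_S)$ and $\beta = (0, K_S)$ via Lemma~\ref{LemBasic}, enumerate the nonempty nested Hilbert schemes with $|n| = c_2$ and $n_0 \geq n_1$, identify them for $c_2 \leq 2$, and exhibit a reducible component for $c_2 \geq 3$ (the paper uses $n = (1,1,c_2-2)$, you use the mirror image $n = (c_2-2,1,1)$; the two are interchangeable, and your set-theoretic dichotomy $\{p=q\}\cup\{p\in C\}$ is correct). Two remarks. First, your justification for the smoothness of $\C^{[2]}_{|K_S|}$ --- that a relative Hilbert scheme of points on a family of curves with smooth total space is smooth --- is not a valid general principle: smoothness of the total space only controls $\C^{[1]}$, and for $n\geq 2$ one needs more (this is precisely the content of the versality/$n$-very-ampleness conditions in the literature on relative Hilbert schemes of locally planar curves). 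The correct argument here is that very ampleness of $K_S$ makes $H^0(K_S)\to H^0(K_S\otimes\O_Z)$ surjective for every length-$2$ subscheme $Z$, so $\C^{[2]}_{|K_S|}\to S^{[2]}$ is a $\PP^{p_g-3}$-bundle and in particular smooth; this is what the paper's appeal to very ampleness means. Second, the ``hard part'' you flag at the end is unnecessary: once the underlying \emph{set} of $S_\beta^{[(c_2-2,1,1)]}$ is a union of two irreducible closed subsets, neither contained in the other and with nonempty intersection, every point of that intersection has a local ring with at least two minimal primes, hence not a domain, hence not regular --- no reducedness statement or tangent-space computation is required, and this is exactly how the paper concludes.
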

\begin{proof}
Note that for $I_i \in S^{[n_i]}$, $i=0,1,2$, we have
\[c_2(I_0\otimes \omega_S \oplus I_1 \oplus I_2) = n_0 + n_1 + n_2\,.\]
By Lemma \ref{LemBasic}, we find
\[\M_{1^3,K_S,c_2} \cong \bigsqcup_{|n| = c_2, n_0\geq n_1} S_{(0,K_S)}^{[n_0,n_1,n_2]}\,,\]
where we have used that $S_{(0,K_S)}^{[n_0,n_1,n_2]}$ is empty whenever $n_0 < n_1$. In particular, we have
\begin{align*}
\M_{1^3,K_S,1}
	& = S_{(0,K_S)}^{[1,0,0]} \sqcup S_{(0,K_S)}^{[0,0,1]} \\
	& \cong S^{[1]} \times |K_S|\, \sqcup\, \C\,;\\
\M_{1^3,K_S,2}
	& = S_{(0,K_S)}^{[2,0,0]} \sqcup S_{(0,K_S)}^{[1,1,0]} \sqcup S_{(0,K_S)}^{[1,0,1]} \sqcup S_{(0,K_S)}^{[0,0,2]} \\
	& \cong \left(S^{[2]} \times |K_S|\right) \sqcup
			\left(S^{[1]} \times |K_S|\right) \sqcup
			\left(S^{[1]}\times \C \right) \sqcup
			\C_{|K_S|}^{[2]}\,.
\end{align*}
The total spaces of the universal canonical curve $\C$, and of relative Hilbert scheme of points $\C^{[2]}_{|K_S|}$ are smooth by the the assumption that $K_S$ is very ample.

The component
\[S_{(0,K_S)}^{[1,1,1]} \cong (S\times\C) \cup (\Delta_S \times |K_S|) \xhookrightarrow{i} S\times S \times |K_S|\]
of $\M_{1^3,K_S,3}$ has two irreducible components with non-empty intersection. More generally, let $c_2 \geq 3$. For an ideal sheaf $I$ on $S$, let $Z_I$ denote the corresponding subscheme. Then the component
\begin{align*}
S_{0,K_S}^{[1,1,c_2-2]}
	& \cong S_{K_S}^{[1,c_2-2]} \\
	& = \{p\in S, I\in S^{[c_2-2]}, C \in |K_S| : Z_I \subset C \cup p\}
\end{align*}
has two components given by the conditions $p\in Z_I$, and $Z_I \subset C$ respectively. Hence, it is singular at points in the intersection given by $p\in Z_I\subset C$. It follows that $\M_{1^r,K_S,c_2}$ is singular.
\end{proof}

The connected components of $\M_{1^3,K_S,1}$, together with the restrictions of the universal sheaf on $\M_{1^3,K_S,1}\times S$ are given as follows:
\begin{align*}
S^{[1]} \times |K_S|
	& \,, \quad \I^{[1]}\otimes \omega_S \oplus \t^{-1} \oplus \O_{|K_S|}(1) \otimes \t^{-2} \,;\\
\C
	& \,, \quad \omega_S \oplus \t^{-1} \oplus j^*(\I^{[1]}\otimes \O_{|K_S|}(1))\otimes \t^{-2} \,.
\end{align*}
in which $j\colon \C\times S \rightarrow S^{[1]} \times |K_S| \times S$ is the inclusion. We have suppressed pull-backs along the several projections. For $c_2 = 2$, we have
\begin{align*}
S^{[2]} \times |K_S|
	& \,, \quad
		\Big( \I^{[2]} \otimes \omega_S \Big) \oplus
		\t^{-1} \oplus
		\Big( \O_{|K_S|}(1) \otimes \t^{-2} \Big) \,;\\
S^{[1]} \times |K_S|
	& \,, \quad
		\Big( \I^{[1]} \otimes \omega_S \Big) \oplus
		\Big( \I^{[1]} \otimes \t^{-1} \Big) \oplus
		\Big( \O_{|K_S|}(1) \otimes \t^{-2} \Big) \,;\\
S^{[1]}\times \C\
	& \,, \quad
		\Big(\I^{[1]} \otimes \omega_S \Big) \oplus
		\t^{-1} \oplus
		\Big(j^*\left(\I^{[1]}\otimes \O_{|K_S|}(1)\right)\otimes \t^{-2} \Big)\,;\\
\C_{|K_S|}^{[2]}
	& \,, \quad
		\omega_S \oplus
		\t^{-1} \oplus
		\Big( j_2^*\left(\I^{[2]}\otimes \O_{|K_S|}(1)\right)\otimes \t^{-2} \Big)\,.
\end{align*}
in which we have written
\[j_2\colon \C_{|K_S|}^{[2]}\times S \hookrightarrow S^{[2]} \times |K_S| \times S\]
for the inclusion. Again, we have suppressed pull-backs along projections. Now define Higgs fields $\phi = (\phi_1,\phi_2)$ by the several natural inclusions of ideal sheaves.

As the moduli spaces are smooth, we can compute the virtual class of each component by taking the Euler class of the obstruction bundle. Write $H \coloneqq c_1(\O_{|K_S|}(1))$. Using Theorem~\ref{TheVir}, we find

\begin{align*}
[S^{[1]} \times |K_S|]^\vir
	& = e(K_S^* + \Omega_{|K_S|}(1)) \\
	&= (-1)^{p_g}\cdot[C]\,,\\
[\C]^\vir
	& = e(\Omega_{|K_S|}(1)) \\
	& = (-1)^{p_g-1}\cdot [C]\,,\\
[S^{[2]} \times |K_S|]^\vir
	& = e(\omega_S^{[2]*} + \Omega_{|K_S|}(1)) \\
	& = [\C_{|K_S|}^{[2]}]\cap (-H)^{p_g-1}\\
	& = (-1)^{p_g-1}\cdot [C^{[2]}]\,,\\
[S^{[1]} \times |K_S|]^\vir
	& = e(H^0(\omega_S)) \\
	& = 0\,, \\
[S^{[1]}\times \C]^\vir
	& = e(\omega_S^* + \Omega_{|K_S|}(1)) \\
	& = (-1)^{p_g}\cdot [C\times C]\,,\\
[\C_{|K_S|}^{[2]}]^\vir
	& = e(\Omega_{|K_S|}(1)) \\
	& = (-1)^{p_g-1}\cdot [C^{[2]}]\,.
\end{align*}
It follows that the computation of the contribution of the Vafa-Witten invariant reduces to computations in the intersection rings of $C$ and $C^{[2]}$. Using Grothendieck-Riemann-Roch to compute the Chern classes of the relative Hom complexes, this is a straight forward computation. The details are similar to the computations in \cite{TT} and \cite{T}.

\section{Comparison to the G\"ottsche-Kool conjectures}
For rank $2$, the Laurent series that appear in Theorem \ref{Theorem1}, and are defined in Section \ref{SecProof}, are given by
\begin{align*}
A^{(2)}
	& = \frac{1}{y^{-\frac{1}{2}} + y^\frac{1}{2}} A^{(2)}_{\underline{\chi(\O_S)}}(y)\,,\\
B^{(2)}
	& = A^{(2)}_{\underline{K_S^2}}(y)\,,\\
C^{(2)}_{11}
	& = - q^{-\frac{1}{4}} \frac{1}{y^{-\frac{1}{2}} + y^\frac{1}{2}} A^{(2)}_{\underline{\beta^1\beta^1}}(y) A^{(2)}_{\underline{\beta^1K_S}}(y)\,,
\end{align*}
and for rank $3$ by
\begin{align*}
A^{(3)}
	& = \frac{1}{y^{-1} + 1 + y} A^{(3)}_{\underline{\chi(\O_S)}}(y)\,,\\
B^{(3)}
	& = A^{(3)}_{\underline{K_S^2}}(y)\,,\\
C^{(3)}_{11}
	& = q^{-\frac{1}{3}} \frac{1}{y^{-1} + 1 + y} A^{(3)}_{\underline{\beta^1\beta^1}}(y) A^{(3)}_{\underline{\beta^1K_S}}(y)\,,\\
C^{(3)}_{22}
	& = q^{-\frac{1}{3}} \frac{1}{y^{-1} + 1 + y} A^{(3)}_{\underline{\beta^2\beta^2}}(y) A^{(3)}_{\underline{\beta^2K_S}}(y)\,,\\
C^{(3)}_{12}
	& = q^{-\frac{1}{3}} \frac{(1+y)^2}{1 + y + y^2} A^{(3)}_{\underline{\beta^1\beta^2}}(y)\,.
\end{align*}

In Section \ref{SecComp}, we have discussed a method for computing the terms of the generating series $A^{(r)}_{\underline{\fN}}$ appearing above. In Appendix \ref{AppPow} we have listed the first few terms of the rank $3$ power series. The computations allow us to check the equations of Conjectures \ref{ConjR2} and \ref{ConjR3} term by term, leading to Theorem \ref{Theorem2}. As an example, let us just verify one term of $C_{12}^{(3)}$. We have
\begin{align*}
C^{(3)}_{12}
	={}& q^{-\frac{1}{3}} \frac{(1+y)^2}{1 + y + y^2} A^{(3)}_{\underline{\beta^1\beta^2}}(y) \\
	={}& q^{-\frac{1}{3}} \frac{(1+y)^2}{1 + y + y^2}
		\left( 1 + \frac{y^4 + 6y^3 + 6y^2 + 6y + 1}{(y + 1)^2y} \, q + \ldots \right)\,.
\end{align*}
On the other hand we have
\begin{align*}
W(q^\frac{1}{2},y)
	& = \frac{\Theta_{A_2,(0,0)}(q^\frac{1}{2},y)}{\Theta_{A_2,(1,0)}(q^\frac{1}{2},y)} \\
	& = \frac{1 + (y^2 + 2y + 2y^{-1} + y^{-2}) \,q + \ldots}{(y + 1 + y^{-1}) q^\frac{1}{3} + (y^2 + 1 + y^{-2})\,q^\frac{4}{3} + \ldots} \\
	& = q^{-\frac{1}{3}}\frac{1}{y + 1 + y^{-1}}
	\frac{1 + (y^2 + 2y + 2y^{-1} + y^{-2}) \,q + \ldots}{1 + (y - 1 + y^{-1})\,q + \ldots}\\
	& = q^{-\frac{1}{3}}\left(\frac{1}{y + 1 + y^{-1}} + \frac{y^2 + y + 1 + y^{-1} + y^{-2}}{y + 1 + y^{-1}}\,q + \ldots\right)\,,
\intertext{and hence}
W(q^\frac{1}{2},1)
	&= q^{-\frac{1}{3}}\frac{1}{3} ( 1 + 5\,q + \ldots)\,.
\end{align*}
It follows that
\begin{align*}
& W_+(q^\frac{1}{2},y) W_-(q^\frac{1}{2},y) \\
& = W(q^\frac{1}{2},y) + 3 W(q^\frac{1}{2},1) \\
& = q^{-\frac{1}{3}} \left( 1 + \frac{1}{y + 1 + y^{-1}} +  \left( \frac{y^2 + y + 1 + y^{-1} + y^{-2}}{y + 1 + y^{-1}} + 5\right)\,q + \ldots\right)\\
& = q^{-\frac{1}{3}} \left(\frac{y + 2 + y^{-1}}{y + 1 + y^{-1}} + \left( \frac{y^2 + y + 1 + y^{-1} + y^{-2}+ 5 (y+1+y^{-1}) }{y + 1 + y^{-1} }\right)\,q + \ldots\right)\\
& = q^{-\frac{1}{3}} \frac{(1+y)^2}{1 + y + y^2} \left( 1 + \frac{y^4 + 6y^3 + 6y^2 + 6y + 1}{(y + 1)^2y} \, q + \ldots \right)\,\\
& \equiv C^{(3)}_{12}(y) \mod U^{(3)}_2 \,,
\end{align*}
where have used the notation
\[U^{(3)}_2 = 1 + q^2 \,\QQ(y^\frac{1}{2})[[q]]\subset \QQ(y^\frac{1}{2})(\!(q^\frac{1}{6})\!)^*\,.\]
from the introduction.

\newpage

\appendix
\section{Functions appearing in the G\"ottsche-Kool conjectures} \label{AppFun}
\begin{align*}
\phi_{-2,1}(x,y)
	& \coloneqq (y^\frac{1}{2} - y^{-\frac{1}{2}})^2 \prod_{n=1}^\infty \frac{(1-x^n y)^2(1-x^n y^{-1})^2}{(1-x^n)^4}\\
\tilde\eta(x)
	& \coloneqq \prod_{n\in\ZZ_{>0}} (1-x^n)\\
\tilde\Delta(x)
	& = \prod_{n\in \ZZ_{>0}} (1-x^n)^{24}\\
\theta_2(x,y)
	& \coloneqq \sum_{n\in \ZZ + \frac{1}{2}} x^{n^2}y^n\\
\theta_3(x,y)
	& \coloneqq \sum_{n\in \ZZ} x^{n^2}y^n\\
\Theta_{A_2,(0,0)}(x,y)
	& \coloneqq \sum_{(m,n)\in\ZZ^2} x^{2(m^2 - mn + n^2)} y^{m+n}\\
\Theta_{A_2,(1,0)}(x,y)
	& \coloneqq \sum_{(m,n)\in\ZZ^2} x^{2(m^2 - mn + n^2+m-n+\frac{1}{3})} y^{m+n}\\
W(x,y)
	& \coloneqq \frac{\Theta_{A_2,(0,0)}(x,y)}{\Theta_{A_2,(1,0)}(x,y)}\,.
\end{align*}

The functions $W_\pm(x,y)$ are defined by following polynomial equation in $\omega$:
\[\omega^2 - (W(x,y)^2 + 3W(x,y)W(x,1))\,\omega + W(x,y) + 3W(x,1) = 0 \,.\]
We will use the convention that $W_-(x,y)$ is the one with leading term
\[x^{\frac{2}{3}} (y^{-1} + 1 + y) \,,\]
so we have
\begin{align*}
W_-(q^{\frac{1}{2}},y)
	& = \frac{y^2 + y + 1}{y} \, q^\frac{1}{3} \left( 1 + \frac{2y^2 + 3y + 2}{(y+1)^2} q + \ldots \right) \\
W_+(q^{\frac{1}{2}},y)
	& = \frac{(y+1)^2 \, y}{y^2+y+1} q^{-\frac{2}{3}} \left(1 + \frac{y^4 + 4 y^3 + 3 y^2 + 4 y + 1}{(y+1)^2} q + \ldots \right) \,.
\end{align*}

\newpage

\section{Rank 3 results} \label{AppPow}
We set $q\coloneqq q_0 = q_1 = q_2$ and print the first few terms of $A_{\underline{\fN}}^{(3)}(y)$ for
\[\underline{\mathfrak{N}}\in \left\{\underline{\chi(\O_S)},\underline{K_S^2}, \underline{\beta^1K_S}, \underline{\beta^2K_S}, \underline{\beta^1 \beta^1}, \underline{\beta^2 \beta^2},  \underline{\beta^1 \beta^2}\right\}\,.\]

\begin{align*}
A_{\underline{\chi(\O_S)}}^{(3)}(y)
	& \equiv 1+\frac{y^6+10y^3+1}{y^3} \, q^3
	\mod q^4\\
A_{\underline{K_S^2}}^{(3)}(y)
	& \equiv 1-{\frac { \left( {y}^{2}+y+1 \right) ^{2}}{ \left( y+1 \right) ^{2}y}} \, q \\
	& -{\frac { \left( 2\,{y}^{4}+7\,{y}^{3}+12\,{y}^{2}+7\,y+2 \right) \left( {y}^{2}+y+1 \right) ^{2}}{{y}^{2} \left( y+1 \right) ^{4}}} \, {q}^{2} \\
	 & +\frac{1}{y^3(y+1)^6} \Big(5y^{12}+39y^{11}+150y^{10}+382y^9+705y^8+1002y^7 \\
	 & +1121y^6+1002y^5+705y^4+382y^3+150y^2+39y+5\Big) \, q^3 \mod{q^4} \\
A_{\underline{\beta^1K_S}}^{(3)}(y)
	& \equiv A_{\underline{\beta^2K_S}}^{(3)}(y)\\
	& \equiv 1 + \frac{1}{2}\frac{(y^2 + y + 1)(y - 1)^2}{(y + 1)^2y}  \, q\\
	& +\frac{1}{8}\frac{(23y^4 + 68y^3 + 142y^2 + 68y + 23)(y^2 + y + 1)^2}{(y + 1)^4y^2} \, q^2 \\
	& -\frac{y^2+y+1}{16 y^3(y+1)^6}\Big(15y^{10}+244y^9+1006y^8+2790y^7+4719y^6+5780y^5 \\
	& +4719y^4+2790y^3+1006y^2+244y+15\Big) \, q^3
	\mod{q^4}\\
A_{\underline{\beta^1 \beta^1}}^{(3)}(y)
	& \equiv A_{\underline{\beta^2 \beta^2}}^{(3)}(y) \\
	& \equiv 1 -\frac{1}{2}\frac{y^4 + 3y^3 + 6y^2 + 3y + 1}{(y + 1)^2y} \, q \\
	& -\frac{1}{8}\frac{1}{(y + 1)^4y^2}
	\Big(5y^8 + 30y^7 + 109y^6 + 218y^5 + 280y^4 \\
	& + 218y^3 + 109y^2 + 30y + 5\Big) \, q^2 \\
	& + \frac{1}{16 y^3(y+1)^6}\Big(11y^{12}+115y^{11}+571y^{10}+1868y^9+4205y^8+6845y^7 \\
	& + 8026y^6+6845y^5+4205y^4+1868y^3+571y^2+115y+11\Big) \, q^3
	\mod{q^4}\\
A_{\underline{\beta^1 \beta^2}}^{(3)}(y)
	& \equiv 1 + \frac{y^4 + 6y^3 + 6y^2 + 6y + 1}{(y + 1)^2y} \, q \\
	& -\frac{y^6 + y^5 + 8y^4 + 8y^3 + 8y^2 + y + 1}{(y + 1)^2y^2} \, q^2 \\
	& + \frac{y^6+3y^4+4y^3+3y^2+1}{y^2 (y+1)^2} \, q^3
	\mod{q^4}
\end{align*}

\bibliographystyle{habbrv}

\bibliography{CiVW}{}

\end{document}